\documentclass[review,hidelinks,onefignum,onetabnum]{siamart251216}

\usepackage{amsfonts}
\usepackage{mathrsfs}
\usepackage{bm}
\usepackage{tikz}
\usetikzlibrary{positioning, shapes.geometric}
\usetikzlibrary{angles,quotes,calc}
\usepackage{graphicx}
\usepackage{epstopdf}
\usepackage{xcolor}

\newsiamthm{assumption}{Assumption}
\newsiamthm{hypothesis}{Hypothesis}
\newsiamthm{example}{Example}
\newsiamremark{remark}{Remark}

\definecolor{revisiongreen}{RGB}{0,128,0}

\definecolor{reviewblue}{RGB}{0,70,190}

\definecolor{reviewred}{RGB}{190,0,0}

\newcommand{\ud}{\mathrm d}
\newcommand{\R}{\mathbb{R}}
\numberwithin{equation}{section}

\newcommand{\E}{\mathbb{E}}\allowdisplaybreaks[4]

\headers{Asymptotic preserving method}{J. Cui, G. Li, and D. Sheng}

\title{Asymptotic preservation of an exponential Euler method under Hamiltonian fast advection with multiple critical points}

\author{Jianbo Cui\thanks{Department of Applied Mathematics, The Hong Kong Polytechnic University, Hung Hom, Kowloon, Hong Kong (\email{jianbo.cui@polyu.edu.hk, guozhen.li@polyu.edu.hk}).}
\and Guozhen Li\footnotemark[1]
\and Derui Sheng\thanks{Department of Applied Mathematics, The Hong Kong Polytechnic University, Hung Hom, Kowloon, Hong Kong (\email{sdr@lsec.cc.ac.cn}).}}

\ifpdf
\hypersetup{
  pdftitle={Asymptotic preservation of an exponential Euler method under Hamiltonian fast advection with multiple critical points},
  pdfauthor={Jianbo Cui, Guozhen Li, and Derui Sheng}
}
\fi

\AtBeginDocument{}

\begin{document}

\maketitle

\begin{abstract}
We establish the asymptotic-preserving property of the exponential Euler method for stochastic reaction-diffusion-advection equations in $\mathbb R^2$ under fast Hamiltonian advection with multiple critical points. In the fast-advection limit, the dynamics reduces to a stochastic partial differential equation on a noncompact metric graph. A key ingredient is the strong convergence analysis of the limiting exponential Euler scheme, which is complicated by the graph's noncompactness, its multiple edges and vertices, and the nonuniform ellipticity and vertex degeneracy of the graph operator. Using a weighted $L^2$ framework and analytic-semigroup smoothing estimates, we prove a temporal convergence order arbitrarily close to $1/2$ for weighted $L^2$ initial data and exactly $1/2$ under a half-order fractional-domain condition. Combining these estimates with the fast-advection limits yields the asymptotic-preserving property in the multiple-critical-point setting. Numerical experiments illustrate the asymptotic behavior and confirm the predicted temporal convergence rate.
\end{abstract}

\begin{keywords}
stochastic PDEs on metric graphs, asymptotic-preserving method, exponential Euler method, strong convergence, fast-advection limit
\end{keywords}

\begin{MSCcodes}
35R02, 60H15, 60H35, 65M12
\end{MSCcodes}
\section{Introduction}
Consider an incompressible flow in $\mathbb R^2$ with stream function
$-H:\mathbb R^2\to\mathbb R$. The density of particles satisfies the Liouville
equation in the inviscid regime, and is transported along the level sets of $H$. In
certain cases, the flow has a small viscosity and the particles undergo a slow
chemical reaction with both deterministic and stochastic components, so the
motion of the particles is described by the following stochastic reaction--diffusion--advection (RDA) equation \cite{CF19}
\begin{equation*}
\left\{
\begin{aligned}
\partial_t\widetilde u_\epsilon(t,x)
&=\left(\frac{\epsilon}{2}\Delta
+\nabla^\perp H(x)\cdot\nabla\right)\widetilde u_\epsilon(t,x)
+\epsilon b\bigl(\widetilde u_\epsilon(t,x)\bigr)+\sqrt{\epsilon}\,g\bigl(\widetilde u_\epsilon(t,x)\bigr)
\partial_t\widetilde{\mathcal W}(t,x),\\
\widetilde u_\epsilon(0,x)&=\chi(x),\quad (t,x)\in(0,T]\times \R^2.
\end{aligned}
\right.
\end{equation*}
Here, $\nabla^\perp H=(-\partial_{x_2}H,\partial_{x_1}H)$, $\widetilde{\mathcal W}$ is a spatially homogeneous Wiener process, $\chi:\mathbb R^2\to\mathbb R$ is the non-random initial value, and
$\epsilon>0$ is a small parameter. Moreover, the Hamiltonian
$H:\mathbb R^2\to[0,\infty)$ grows quadratically at infinity, and
we assume that $b,g:\mathbb R\to\mathbb R$ are Lipschitz continuous; see section~\ref{S:Pr} for more details. To
describe the long-time behavior of $\tilde{u}_\epsilon$ over the time period $[0,T/\epsilon]$, we set
$u_\epsilon(t,x):=\widetilde u_\epsilon(t/\epsilon,x)$ and obtain the following stochastic RDA equation with fast Hamiltonian advection:
\begin{equation}\label{eq:SRDA-intro}
\left\{
\begin{aligned}
	\partial_t u_\epsilon(t,x)
	&=\mathcal L_\epsilon u_\epsilon(t,x)
	\!+ b(u_\epsilon(t,x))
	+ g(u_\epsilon(t,x))\partial_t\mathcal{W}(t,x),\\
	u_\epsilon(0,x)&=\chi(x),\quad (t,x)\in (0,T]\times \R^2,
\end{aligned}
\right.
\end{equation}
where $\mathcal W(t,x):=\sqrt{\epsilon}\,\widetilde{\mathcal W}(t/\epsilon,x)$. The operator
$
\mathcal L_\epsilon
:=\frac{1}{2}\Delta+\frac{1}\epsilon\nabla^\perp H\cdot\nabla
$
is the infinitesimal generator of
the diffusion process:
\begin{equation}\label{eq:Xintro}
	\ud X_\epsilon(t)
	=
	\frac{1}\epsilon\nabla^\perp H\big(X_\epsilon(t)\big)\ud t
	+\ud \textup{B}(t),\quad X_\epsilon(0)=x\in \R^2,
\end{equation}
where $\textup{B}$ is a two-dimensional Brownian motion. 

 The multiscale model \eqref{eq:SRDA-intro} reduces to a stochastic partial differential equation (SPDE) on a {\color{black}metric graph} $\Gamma$ in the fast-advection limit. Indeed,
 collapsing each component of a level set
\(\mathrm C(z):=\{x\in\mathbb R^2:H(x)=z\}\) to a point 
yields a noncompact {\color{black}metric graph} \(\Gamma\) with finitely many edges and
vertices, the latter corresponding to the critical points of \(H\);
see section~\ref{S:2.1}. We denote by
\(\Pi:\mathbb R^2\to\Gamma\) the associated projection,
$
    \Pi(x)=(H(x),k(x)),
$
where \(k(x)\) labels the connected component of the level set
containing \(x\). By an averaging principle,
\(\Pi(X_\epsilon)\) converges in distribution to a Markov process
\(\bar Y\) on \(\Gamma\) \cite{FW12}. Moreover, by introducing a
suitable graph weight \(\gamma:\Gamma\to(0,\infty)\), it was shown
in \cite{CF19} that, for every \(\tau_0\in(0,T)\),
\begin{equation}\label{eq:u-u}
\lim_{\epsilon\to0}
\E\bigg[
\sup_{t\in[\tau_0,T]}
\|u_\epsilon(t)-u(t)\circ\Pi\|_{\mathbb H_\gamma}^2
\bigg]
=0\quad \text{with } \mathbb H_\gamma
:=L^2(\mathbb R^2;\gamma(\Pi(x))\ud x).
\end{equation}
The limiting process $\{u(t)\}_{t\in[0,T]}$ in \eqref{eq:u-u} is governed by the stochastic PDE
\begin{equation}\label{eq:intro}
\left\{
\begin{aligned}
	\partial_t u(t,z,k)
	&=\mathcal Lu(t,z,k)
	+b\big(u(t,z,k)\big)
	+g\big(u(t,z,k)\big)\partial_tW(t,z,k),\\
	u(0,z,k)&=\chi^\wedge(z,k),
\end{aligned}
\right.
\end{equation}
for $t\in(0,T]$ and $(z,k)\in\Gamma$. The Wiener process $W$ and the
initial value $\chi^\wedge$ are the formal projections of
$\mathcal W$ and $\chi$ onto the {\color{black}metric graph} $\Gamma$, respectively; see
section~\ref{S3}. The operator $\mathcal L$ is the infinitesimal
generator of the limiting process $\bar Y$. It acts as a second-order
differential operator on each edge $I_k$, degenerates near the vertices $\{O_i\}$,
and is endowed with Kirchhoff-type vertex gluing conditions; see \eqref{eq:glue}.

Equation \eqref{eq:intro} belongs to the broader class of stochastic
PDEs on metric graphs (see, e.g., \cite{CF19,CHW24}), which arise in a variety of applications.
 In neuroscience, for instance, stochastic PDEs on {\color{black}metric graphs} can describe
the propagation of electrical signals along dendritic trees, with
stochastic impulsive inputs accounting for excitatory and inhibitory
influences from neighboring neurons
\cite{BMZ08,BM10}. Beyond neuroscience, such equations arise
in free-electron models for organic molecules \cite{LP36},
superconductivity in granular and engineered materials \cite{AS83},
wave propagation in acoustic and electromagnetic networks
\cite{CRH87}, Anderson transitions in disordered wires
\cite{AMR20,SB82}, quantum chaos \cite{BK13}, and statistical
modeling \cite{BSW24}.
Since \eqref{eq:SRDA-intro} generally has no explicit solution, numerical discretization provides a practical tool for investigating the long-time dynamics of particles in a stochastic incompressible viscous fluid. A key numerical difficulty is to preserve the correct limiting dynamics
\eqref{eq:intro} in the fast-advection regime, which motivates the notion of asymptotic preserving (AP). A time discretization of \eqref{eq:SRDA-intro} is called AP if, for every fixed time step $\tau=T/N$, its
limit as $\epsilon\to0$ is a consistent time discretization of the limiting
graph equation \eqref{eq:intro}. Once the two one-parameter limits are known
to exist, the AP property is equivalently expressed by commutation of the
limits $\epsilon\to0$ and $N\to\infty$.
As emphasized in \cite{Jin12}, when the small scale is not
numerically resolved, an AP scheme of \eqref{eq:SRDA-intro} automatically becomes a macroscopic solver
for the limiting equation \eqref{eq:intro}.

\begin{figure}[htbp]
\centering
\begin{tikzpicture}[
    >=stealth,
    state/.style={
        font=\normalsize,
        inner sep=4pt
    },
    reference/.style={
        font=\scriptsize
    }
]
\node[state] (UL) at (0,0)      {$U_\epsilon^N$};
\node[state] (UR) at (7.0,0)    {$U^N\circ\Pi$};
\node[state] (LL) at (0,-2.0)   {$u_\epsilon(T)$};
\node[state] (LR) at (7.0,-2.0) {$u(T)\circ\Pi$};

\draw[->,thick]
    (UL)--(UR)
    node[midway,above=3pt] {$\epsilon\to0$}
    node[midway,below=3pt,reference]
    {\cite[Theorem~2]{CS24}};

\draw[->,thick]
    (LL)--(LR)
    node[midway,above=3pt] {$\epsilon\to0$}
    node[midway,below=3pt,reference]
    {\cite[Theorem~5.3]{CF19}};

\draw[->,thick]
    (UL)--(LL)
    node[midway,left=6pt] {$N\to\infty$}
    node[midway,right=6pt,reference]
    {\cite[Theorem~1]{CS24}};

\draw[->,thick]
    (UR)--(LR)
    node[midway,right=6pt] {$N\to\infty$}
    node[midway,left=6pt,reference]
    {Theorem~\ref{thm:main2}};
\end{tikzpicture}
\caption{Commutative diagram illustrating the AP
property.}
\label{fig.doubleDiagram}
\end{figure}

In this work, we study the exponential Euler scheme for \eqref{eq:SRDA-intro}, with a particular focus on its AP property.
 Let $U_\epsilon^N$ denote the corresponding
numerical approximation of $u_\epsilon(T)$ with step size
$\tau=T/N$. It was proved in  \cite{CS24} that, for every fixed $\epsilon>0$,
$U_\epsilon^N$ converges to $u_\epsilon(T)$ as $N\to\infty$, while,
for every fixed $N$, $U_\epsilon^N$ converges as $\epsilon\to0$ to
the lift of the numerical solution $U^N$ of the limiting equation \eqref{eq:intro}.
More precisely,
\begin{equation}\label{eq:UU}
\lim_{N\to\infty}U_\epsilon^N=u_\epsilon(T),
\qquad
\lim_{\epsilon\to0}U_\epsilon^N=U^N\circ\Pi
\quad\text{in }L^2(\Omega,\mathbb H_\gamma).
\end{equation}
 Accordingly, the AP property in the present
setting means that
\begin{equation}\label{eq:AP-intro}
\lim_{N\to\infty}\lim_{\epsilon\to0}U_\epsilon^N
=
u(T)\circ\Pi
=
\lim_{\epsilon\to0}\lim_{N\to\infty}U_\epsilon^N
\quad\text{in }L^2(\Omega,\mathbb H_\gamma).
\end{equation}
In view of \eqref{eq:u-u} and \eqref{eq:UU}, the analysis of \eqref{eq:AP-intro} reduces to establishing the strong convergence
of $U^N$ to $u(T)$; see
Fig.~\ref{fig.doubleDiagram}. 
In fact, the sequence $\{U^n\}_{n=0}^N$ is precisely the exponential Euler
approximation of the limiting equation \eqref{eq:intro}, given
by
\begin{equation}\label{eq:Un-intro}
U^n
=
e^{\tau\mathcal L}U^{n-1}
+\tau e^{\tau\mathcal L}B(U^{n-1})
+e^{\tau\mathcal L}G(U^{n-1})\delta W_n,
\qquad n=1,\ldots,N,
\end{equation}
with $U^0=\chi^\wedge$, where
$\delta W_n:=W(t_n)-W(t_{n-1})$, $t_n:=n\tau$, and $B$ and $G$ are
the Nemytskii operators associated with $b$ and $g$, respectively;
see \eqref{eq:BG}.
In \cite{CS24}, the strong convergence of $U^N$
was established under the assumption that the Hamiltonian $H$ has a
unique critical point, in which case the associated {\color{black}metric graph} $\Gamma$
reduces to the half-line $[0,\infty)$, consisting of a single edge with no
branching or interior vertices. Hence, the existing analysis 
does not address the genuinely {\color{black}multi-edge metric-graph} setting arising
from Hamiltonians with multiple critical points, which motivates the present work.

Compared to \cite{CS24}, we establish the strong
convergence of the limiting scheme \eqref{eq:Un-intro} on a noncompact,
{\color{black}multi-edge metric graph} with interior vertices.  This extension is not formal because
the limiting generator $\mathcal{L}$ is degenerate at the vertices and its domain couples all
incident edges through Kirchhoff conditions. %
 More generally,
the convergence analysis of numerical approximations for stochastic
PDEs on {\color{black}metric graphs} remains at an early stage
\cite{BKKS24,CKS26}, mainly focused on spatial discretizations. In
\cite{BKKS24}, a method combining finite elements with rational
approximations of fractional operators was proposed for stochastic
fractional elliptic equations on compact metric graphs. More recently,
\cite{CKS26} developed a spatial discretization of
\eqref{eq:intro} based on localization, regularization, and finite
element techniques. For the temporal approximation
\eqref{eq:Un-intro},
the strong convergence analysis presents several difficulties. First, the Kirchhoff-type conditions globally couple the operator domain across different edges, complicating the regularity estimates of \eqref{eq:intro} required in the error analysis. Second, the operator
$\mathcal L$ is not uniformly elliptic and degenerates near the
vertices, which makes semigroup estimates more delicate. Finally,
the driving process $W$ generally lacks $H^1(\Gamma)$-regularity because
its spatial derivatives exhibit singular behavior near the vertices.
Consequently, standard arguments that treat $W$ as an
$H^1(\Gamma)$-valued Wiener process are not applicable.

To overcome these difficulties, 
the error analysis of \eqref{eq:Un-intro} is carried out in the weighted space
$L^2_{\beta\gamma}(\Gamma)$, where
$\beta_k(z)$ is the period of the Hamiltonian
motion along $\mathrm C_k(z)$. In this space, the operator
$\mathcal L$ generates an analytic semigroup \cite{CKS26}, whose
smoothing properties remain available despite the degeneracy of
$\mathcal L$ near the vertices. A central tool is the square-function
estimate
\begin{equation*}%
\int_0^T
\big\|
(\lambda I-\mathcal L)^{\frac{1}{2}}e^{t\mathcal L}\phi
\big\|_{L^2_{\beta\gamma}(\Gamma)}^2
\,\ud t
\leq
C\|\phi\|_{L^2_{\beta\gamma}(\Gamma)}^2,
\qquad
\phi\in L^2_{\beta\gamma}(\Gamma),
\end{equation*}
for some $\lambda,C>0$. It controls the half-order fractional derivative
of the stochastic convolution and avoids the stronger, generally false,
requirement that the driving Wiener process be $H^1(\Gamma)$-valued. Together with the temporal
H\"older regularity of the mild solution, it yields the strong
convergence estimates for the exponential Euler approximation.
The AP property then follows by combining this convergence with the
continuous fast-advection limit \eqref{eq:u-u} and the discrete
fast-advection limit \eqref{eq:UU}.

Our main contributions are summarized as follows.

\begin{itemize}

\item[(i)] We establish the optimal temporal H\"older regularity estimate of the mild solution to the stochastic PDE \eqref{eq:intro} posed on a noncompact {\color{black}multi-edge metric graph}; see Theorem~\ref{lem:opt-Hol}.
\item[(ii)] We prove strong convergence of every order below $1/2$ for weighted-$L^2$ initial data and of order $1/2$ under a half-order fractional-domain condition for the exponential Euler approximation \eqref{eq:Un-intro}; see Theorem~\ref{thm:main2}.  To the best of our knowledge, this is the
first strong convergence result for a temporal discretization of a
stochastic PDE on a noncompact {\color{black}multi-edge metric graph}.

\item[(iii)] Based on the strong convergence result, we establish the AP
property \eqref{eq:AP-intro} when the Hamiltonian has multiple
critical points; see Theorem~\ref{coro:AP}.

\end{itemize}

The remainder of this paper is organized as follows.
Section~\ref{S:Pr} introduces the Hamiltonian fast-advection
framework and the associated {\color{black}metric graph}, and presents the continuous and
discrete fast-advection limits. Section~\ref{S3} studies the
stochastic PDE \eqref{eq:intro} and establishes the regularity estimates needed for
the numerical analysis. Section~\ref{S4} proves the strong convergence
of the exponential Euler approximation \eqref{eq:Un-intro} and derives the AP property \eqref{eq:AP-intro}.
Section~\ref{S:NE} presents a fully discrete implementation
of \eqref{eq:intro} and the numerical
experiments.%

\section{Hamiltonian fast-advection limits and exponential Euler discretization}\label{S:Pr}
This section introduces the graph structure and the associated
weighted spaces, and recalls the continuous fast-advection limit of
\eqref{eq:SRDA-intro} and the discrete fast-advection limit of its
exponential Euler approximation.

We first introduce some notation. In the sequel, we denote by $C$ a generic constant independent of the time step size $\tau:=T/N$, whose specific value may vary between occurrences. When
necessary, we write $C(a,b,\ldots)$ to emphasize its dependence upon
parameters $a,b,\ldots$.
Given two Hilbert spaces $V_1$ and $V_2$, we denote by $\mathscr{L}(V_1)$ the space of bounded linear operators from $V_1$ to itself, equipped with the operator norm, 
 and by $\mathscr{L}_2(V_1, V_2)$ the space of Hilbert--Schmidt operators from $V_1$ to $V_2$, endowed with the Hilbert--Schmidt norm
(cf.~\cite{DZ14}). Henceforth, for a Banach space $V$, $\mathcal{C}([0,T];V)$ denotes the space of continuous functions from $[0,T]$ to $V$ endowed with the supremum norm. Similarly, for any $p\in[1,\infty)$, $L^p(\Omega,V)$ denotes the space of $p$-integrable random variables $Z:\Omega\to V$ equipped with the norm $\|Z\|_{L^p(\Omega,V)}:=(\E[\|Z(\omega)\|_{V}^p])^{\frac{1}{p}}$.

\subsection{Hamiltonian, projection, and {\color{black}metric graph}}\label{S:2.1}
Recall that $-H$ is the stream function associated with the
incompressible velocity field $\nabla^\perp H$, and that
$\mathcal L_\epsilon$ is the infinitesimal generator of the diffusion
process $X_\epsilon$ given by \eqref{eq:Xintro}.
Throughout this paper, assume that the Hamiltonian $H$ satisfies the following conditions \cite[Hypothesis 1]{CF19}:
\begin{enumerate}
	\item[(i)] $ H \in \mathcal{C}^4(\R^2)$ with bounded second derivatives and
	$\min _{x \in \mathbb{R}^2}  H (x)=0;
	$
	\item[(ii)] $ H $ has finitely many critical points $ \mathbf{x}_1, \ldots, \mathbf{x}_{m_1}$. 
	The Hessian matrix $\nabla^2  H (\mathbf{x}_i)$ is non-degenerate for every $i=1,2, \ldots, {m_1}$, and $ H (\mathbf{x}_i) \neq H (\mathbf{x}_j)$ if $i \neq j$;
	\item[(iii)] There exist $\mathfrak{a}_1, \mathfrak{a}_2, \mathfrak{a}_3>0$ such that
	for all $x \in \mathbb{R}^2$ with $|x|$ large enough,
	\begin{equation*}
		H (x) \ge \mathfrak{a}_1|x|^2,\quad|\nabla  H (x)| \ge \mathfrak{a}_2|x|,\quad\Delta  H (x) \ge \mathfrak{a}_3.
	\end{equation*}
\end{enumerate}

For each connected component $\mathrm{C}_k(z)$ of the level set 
$
\mathrm{C}(z):=\{x\in\mathbb{R}^2:  H(x)=z\},
$
the probability measure $\mu_{z,k}$, given by
\begin{equation}\label{eq:mubeta}
\mathrm{d}\mu_{z,k}(x)=\frac{1}{\beta_k(z)} |\nabla H(x)|^{-1} \mathrm{d}l_{k,z},\qquad \beta_k(z):=\oint_{\mathrm{C}_k(z)}|\nabla H(x)|^{-1} \mathrm{d}l_{k,z}
\end{equation}
is invariant for the Hamiltonian system $\dot{X}(t)=\nabla^\perp H(X(t))$, where $\ud l_{z,k}$ is the length element on $\textup{C}_k(z)$.
By collapsing all points in $\mathbb{R}^2$ that lie in the same connected component $\mathrm{C}_k(z)$ into a single point, one obtains a {\color{black}metric graph} $\Gamma$ composed of finitely many vertices $\{O_i\}_{i=1}^{m_1}$ and edges $\{I_k\}_{k=1}^m$. Each vertex $O_i$ is associated with a critical point $\mathbf{x}_i$ of $H$: interior and exterior vertices correspond to saddle points and local extrema of $H$, respectively.
By a harmless abuse of notation, we write $H(O_i):=H(\mathbf{x}_i)$ for the critical energy associated with $O_i$. 
In addition, we regard $O_{\infty}$ as an exterior vertex, corresponding to the endpoint of the unique unbounded edge $I_m$, which represents the point at infinity (see Fig.~\ref{Hamilton_level} for an illustration). Denote by $\Pi:\mathbb{R}^2\to\Gamma$ the projection map that assigns to each $x\in\mathbb{R}^2$ its equivalence class on $\Gamma$. More precisely,
$
\Pi(x) = (H(x),  k(x)),
$
where $k(x)\in\{1,\ldots,m\}$ indicates the edge $I_{k(x)}$ containing $\Pi(x)$. In this way, each edge $I_k$ connecting two vertices $O_i$ and $O_j$ can be identified with the interval $[H(\mathbf{x}_i), H(\mathbf{x}_j)]$, where $H(\mathbf{x}_i) < H(\mathbf{x}_j)$, and we write $I_k \cong [H(\mathbf{x}_i), H(\mathbf{x}_j)]$. We also denote by $\mathring{I}_k\cong (H(\mathbf{x}_i), H(\mathbf{x}_j))$ the interior of the edge $I_k$. In addition, $I_m \cong [H_0, \infty)$, where
$
H_0 := \max_{1 \le i \le m_1} H(\mathbf{x}_i).$ 
A function $f$ defined on the {\color{black}metric graph} $\Gamma$ is said to be continuous at a vertex $O_i$ if for all edges $\{I_k\}$ incident to $O_i$, the corresponding coordinate functions $\{f(\cdot,k)\}$ attain the same value at $O_i$. Furthermore, a function $f$ is said to be continuous on the graph $\Gamma$ if it is continuous on every edge and continuous at every vertex. By averaging with respect to the invariant measure $\mu_{z,k}$, it was proved in \cite[Chapter 8]{FW12} that for any deterministic initial condition $X_\epsilon(0)\in\mathbb{R}^2$, the projected process $\{\Pi(X_\epsilon(t))\}_{t\in[0,T]}$ converges in distribution to a Markov process $\{\bar{Y}(t)\}_{t\in[0,T]}$ on the graph $\Gamma$, described by its infinitesimal generator $\mathcal{L}$ (see section \ref{S3.1} for more details).

\begin{figure}[!htb]
 \centering
 \includegraphics[width=0.96\linewidth]{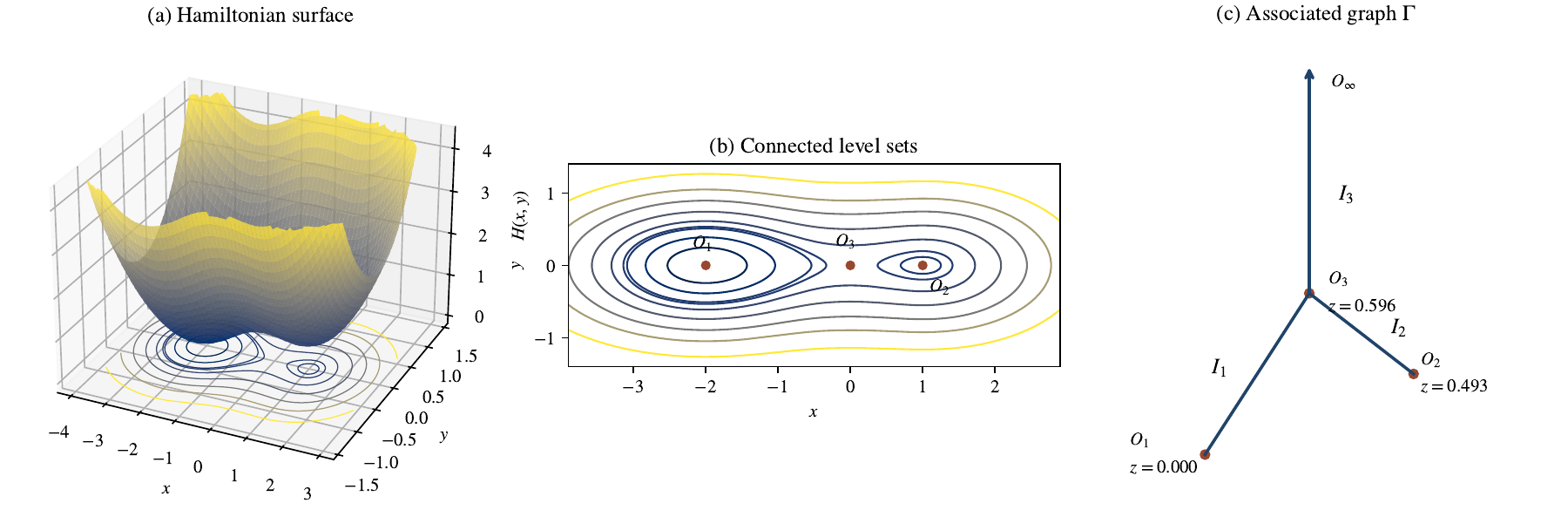}
 \caption{(a) The Hamiltonian surface and its projected level curves.
 (b) Level sets in the $(\mathtt{x},\mathtt{y})$-plane.
 (c) The associated {\color{black}metric graph} $\Gamma$.}
 \label{Hamilton_level}
\end{figure}

\subsection{Fast advection asymptotics for (1.1)}
Let the driving process $\mathcal{W}$ of \eqref{eq:SRDA-intro} be a spatially homogeneous Wiener process on $\mathbb{R}^2$ with a finite nonnegative-definite symmetric spectral measure $\mu$. Namely, $\mathcal{W}$ is a Gaussian
random field on $[0,\infty)\times\mathbb{R}^2$ defined on the stochastic basis
$(\Omega,\mathscr{F},\{\mathscr{F}_t\}_{t\geq 0},\mathbb{P})$ such that for every
$t,s\geq 0$ and $x,y\in\mathbb{R}^2$,
\begin{equation*}%
\mathbb{E}\big[\mathcal{W}(t,x)\mathcal{W}(s,y)\big]
= (t\wedge s)\Lambda(x-y),
\end{equation*}
where $\Lambda$ is the Fourier transform of $\mu$, i.e.,
$
\Lambda(x)=\int_{\mathbb{R}^2} e^{\mathbf{i} x\cdot \xi}\,\mu(\ud\xi)
$
for $x\in\mathbb{R}^2$. Here $\mathbf{i}$ is the imaginary unit.
Denote by $L^2_{(s)}(\R^2,\ud \mu)$ the subspace of the Hilbert space $L^2(\R^2,\ud \mu;\mathbb{C})$ consisting of all functions $\varphi$ satisfying $ \overline{\varphi(-x)}=\varphi(x)$, $x \in\R^2$.  By \cite[Proposition 1.2]{PZ97}, an orthonormal basis for the reproducing kernel space of $\mathcal{W}$  is given by $\{\widehat{{\mathfrak{u}_j\mu}}\}_{j\in\mathbb{N}_+}$, where $\{{\mathfrak{u}_j}\}_{j\in\mathbb{N}_+}$ is a complete orthonormal basis of the Hilbert space $L^2_{(s)}(\R^2,\ud \mu)$ and
$\widehat{{\mathfrak{u}_j\mu}}(x)=\int_{\R^2}e^{\mathbf{i}x\cdot\xi} {\mathfrak{u}_j}(\xi)\mu(\ud\xi)$ for $x\in\R^2$.
Consequently, $\mathcal W$ admits the following Karhunen--Lo\`eve expansion
\begin{equation*}%
	\mathcal W(t,x)=\sum_{j=1}^\infty\widehat{{\mathfrak{u}_j\mu}}(x)\tilde{\boldsymbol{\beta}}_j(t),\quad t \geq 0,~x\in\R^2,
\end{equation*}
where $\{\tilde{\boldsymbol{\beta}}_j\}_{j\in\mathbb N_+}$ is a sequence of independent Brownian motions defined on the stochastic basis $(\Omega,\mathscr F,\{\mathscr F_t\}_{t\ge0},\mathbb P)$. By the assumed finiteness of the spectral measure $\mu$, for every $x\in\R^2$ (see \cite{CS24}),
	\begin{equation}\label{eq:ujmu1}
	\sum_{j=1}^\infty|\widehat{{\mathfrak{u}_j\mu}}(x)|^2
	\le \mu(\R^2)<\infty.	
	\end{equation}

In the sequel,
 let $\gamma: \Gamma \to (0,+\infty)$, $(z,k)\mapsto \gamma_k(z)$ be a bounded continuous function satisfying
\begin{equation}\label{eq:IkTk}
\sum_{k=1}^m \int_{I_k} \gamma_k(z)\,\beta_k(z)\, \mathrm{d}z < \infty
\end{equation}
with $\beta_k$ the Hamiltonian period defined in \eqref{eq:mubeta}.
The following assumption ensures that the semigroup $\{e^{t\mathcal{L}_\epsilon}\}_{t\ge 0}$ is uniformly bounded in $\mathscr{L}(\mathbb{H}_\gamma)$, where $\mathbb{H}_\gamma = L^2(\mathbb{R}^2; \gamma(\Pi(x))\, \mathrm{d}x)$, that is,  there exists $C:=C(T)>0$ such that (\cite[Lemma 4.1]{CS24})  
\begin{equation*}
\|e^{t\mathcal{L}_\epsilon}\|_{\mathscr{L}(\mathbb{H}_\gamma)}\le C,\quad \forall~\epsilon>0,t\in[0,T].
\end{equation*}

 \begin{assumption}%
 	\label{Asp:gamma}
 Let $\gamma(z,k)=\vartheta(z)$ for every $(z,k)\in\Gamma$, where
$\vartheta:[0,\infty)\to(0,\infty)$ is twice differentiable satisfying 
$|z\vartheta^{\prime\prime}(z)|+|\vartheta^{\prime}(z)|\le C\vartheta(z)$ for all $z\in[0,\infty)$.
 \end{assumption}

 For a function $\varphi:\mathbb{R}^2\to \mathbb{R}$, we define its projection $\varphi^\wedge:\Gamma\to\mathbb{R}$ by $ \varphi^\wedge(z,k)=\oint_{\mathrm{C}_k(z)} \varphi(x)\,\mathrm{d}\mu_{z,k}, (z,k)\in \Gamma,$ while for
a function $f:\Gamma\to \mathbb{R}$, we denote its lift by $f^\vee(x):=f(\Pi(x)), x\in\R^2.$ The space $\mathbb{H}_\gamma$ projects onto the graph weighted space
\begin{equation*}
L^2_{\beta\gamma}(\Gamma):=\bigg\{f:\Gamma\to\R:\|f\|_{L^2_{\beta\gamma}(\Gamma)}^2=\sum_{k=1}^m\int_{I_k}|f(z,k)|^2\beta_k(z)\gamma_k(z)\ud z<\infty\bigg\}
\end{equation*}
such that 
 \begin{equation}\label{eq:contraction}
 	\|f^\vee\|_{\mathbb{H}_\gamma}=\|f\|_{L^2_{\beta\gamma}(\Gamma)},\quad \|\varphi^\wedge\|_{L^2_{\beta\gamma}(\Gamma)}\le \|\varphi\|_{\mathbb{H}_\gamma},\quad \forall~ f\in L^2_{\beta\gamma}(\Gamma),~\varphi\in \mathbb{H}_\gamma.
 \end{equation}
 It was shown in \cite{CF19} that the limiting process of $\{(u_\epsilon)^\wedge\}_{\epsilon>0}$ is governed by the stochastic PDE \eqref{eq:intro} on the {\color{black}metric graph} $\Gamma$ (see Proposition \ref{tho:faa} below).
The driving process $W$ of \eqref{eq:intro} is the projection of $\mathcal{W}$ given by 
\begin{equation*}
W(t, z, k)=\sum_{j=1}^{\infty}(\widehat{{\mathfrak{u}}_j \mu})^{\wedge}(z, k) \tilde{\boldsymbol{\beta}}_j(t), \quad t \geq 0 ,\quad(z, k) \in \Gamma.
\end{equation*}
Due to \eqref{eq:ujmu1} and the Cauchy--Schwarz inequality, we also have that for any $(z,k)\in\Gamma$,
	\begin{equation}\label{eq:ujmu2}
	\sum_{j=1}^\infty|(\widehat{{\mathfrak{u}_j\mu}})^\wedge(z,k)|^2
	\le \mu(\R^2)<\infty.	
	\end{equation}
	In the sequel, we assume that $\chi\in \mathbb{H}_\gamma$.
	Then under Assumption \ref{Asp:gamma}, both the multiscale model \eqref{eq:SRDA-intro} and its limiting equation \eqref{eq:intro} admit a unique mild solution (see \cite{CS24}).
Moreover, we have the following fast-advection asymptotics of the stochastic RDA equation \eqref{eq:SRDA-intro}.
\begin{proposition}[{\cite[Theorem 5.3]{CF19}}]\label{tho:faa}
Under the standing assumptions on $H$, $b$, $g$, $\mathcal W$, and $\chi$
above, let Assumption \ref{Asp:gamma} hold and suppose that
\begin{equation}\label{eq:Tneq0}
\beta_k'(z)\neq 0,\quad (z,k)\in \mathring{I}_k,\quad k=1,2,\ldots,m.
\end{equation}
 Then for every $p\ge1$ and $\tau_0\in(0,T )$, 
 \begin{equation*}%
\lim_{\epsilon\to0}\E\bigg[\sup_{t\in[\tau_0,T ]}\|u_\epsilon(t)-u(t)^\vee\|_{\mathbb{H}_\gamma}^p\bigg]
=0.
\end{equation*}
\end{proposition}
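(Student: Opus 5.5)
This result is quoted from \cite[Theorem~5.3]{CF19}, so in the present paper we would simply invoke it. To make the dependence on the hypotheses transparent, we outline the argument we would follow; it is the standard averaging route for mild solutions.

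\textbf{Step 1 (mild formulation and lifting).} We write $u_\epsilon$ in mild form with the semigroup $e^{t\mathcal L_\epsilon}$, and $u(t)^\vee$ in mild form lifted from $\Gamma$. This gives
\begin{equation*}
u(t)^\vee=(e^{t\mathcal L}\chi^\wedge)^\vee+\int_0^t (e^{(t-s)\mathcal L}B(u(s)))^\vee\,\ud s+\int_0^t (e^{(t-s)\mathcal L}G(u(s))\,\ud W(s))^\vee .
\end{equation*}
Subtracting the two mild equations splits $u_\epsilon(t)-u(t)^\vee$ into three pieces.
\begin{itemize}
\item[(a)] The initial-datum term $e^{t\mathcal L_\epsilon}\chi-(e^{t\mathcal L}\chi^\wedge)^\vee$.
\item[(b)] Drift and diffusion terms of the form $e^{(t-s)\mathcal L_\epsilon}[\,\cdot\,(u_\epsilon(s))-\cdot\,(u(s)^\vee)]$. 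These are controlled by the uniform bound $\|e^{t\mathcal L_\epsilon}\|_{\mathscr L(\mathbb H_\gamma)}\le C$, the Lipschitz property of $b,g$, the noise bound \eqref{eq:ujmu1}, and the BDG inequality.
\item[(c)] Consistency terms $[e^{(t-s)\mathcal L_\epsilon}-(e^{(t-s)\mathcal L}(\cdot)^\wedge)^\vee]F(u(s)^\vee)$, where $F$ stands for $B$ or for $G$ applied to the noise basis $\widehat{\mathfrak u_j\mu}$.
\end{itemize}

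\textbf{Step 2 (key semigroup convergence).} The core claim is that for each $\varphi\in\mathbb H_\gamma$ and $t>0$,
\begin{equation*}
\|e^{t\mathcal L_\epsilon}\varphi-(e^{t\mathcal L}\varphi^\wedge)^\vee\|_{\mathbb H_\gamma}\to0 .
\end{equation*}
This must hold uniformly for $t$ in compacts of $(0,T]$, and in a dominated form as $t\downarrow0$.

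We would prove it by splitting the difference into two parts.
\begin{itemize}
\item[(i)] $e^{t\mathcal L_\epsilon}\varphi-(e^{t\mathcal L_\epsilon}\varphi)^{\wedge\vee}$, which tends to $0$ for $t\ge\tau_0$. The fast rotation along level sets mixes within each $\mathrm C_k(z)$. This is exactly where the nondegeneracy condition \eqref{eq:Tneq0} on $\beta_k'$ enters, since it gives averaging on each orbit.
\item[(ii)] $(e^{t\mathcal L_\epsilon}\varphi)^\wedge-e^{t\mathcal L}\varphi^\wedge$. This follows from the Freidlin--Wentzell weak convergence $\Pi(X_\epsilon)\Rightarrow\bar Y$, upgraded from bounded continuous test functions to $L^2_{\beta\gamma}$ by density. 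The upgrade uses Assumption~\ref{Asp:gamma} and the tightness coming from the quadratic growth of $H$.
\end{itemize}
The restriction to $t\ge\tau_0$ is necessary because $\chi$ need not be constant on level sets. Term (a) therefore does not vanish at $t=0$, and part (i) only yields averaging after a positive time.

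\textbf{Step 3 (closing).} We would combine these estimates via Gronwall in $L^p(\Omega,\mathbb H_\gamma)$ and dominated convergence in $s$ for term (c). The supremum over $t\in[\tau_0,T]$ is obtained by the factorization method for the stochastic convolutions, together with equicontinuity in $t$.

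We expect the main obstacle to be part (i) of Step 2. It requires quantitative averaging near the vertices, where $\beta_k$ blows up logarithmically at saddles and the rotation period degenerates. It also requires uniformity in the noncompact direction, which is handled by the weight $\gamma$. For this step we rely on \cite{CF19}.
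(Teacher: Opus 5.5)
Your proof, namely invoking \cite[Theorem~5.3]{CF19}, is what the paper does: the proposition is quoted there with no argument of its own. If you keep the accompanying outline, however, the justification you give for Step~2(ii) does not work. For $\varphi\in\mathbb H_\gamma$ not constant on level sets,
\[
(e^{t\mathcal L_\epsilon}\varphi)^\wedge(z,k)=\oint_{\mathrm C_k(z)}\E\bigl[\varphi(X_\epsilon^x(t))\bigr]\,\ud\mu_{z,k}(x)
\]
depends on the law of $X_\epsilon(t)$ itself, not only on that of $\Pi(X_\epsilon(t))$. So the Freidlin--Wentzell convergence $\Pi(X_\epsilon)\Rightarrow\bar Y$ does not yield it. Density does not rescue this, because level-set-constant functions form a closed proper subspace of $\mathbb H_\gamma$.

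Your part~(i) does not close the gap either. Applied to $\psi:=\varphi-(\varphi^\wedge)^\vee$, it only says that $e^{t\mathcal L_\epsilon}\psi$ becomes constant on level sets. It does not say that $(e^{t\mathcal L_\epsilon}\psi)^\wedge\to0$.

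The natural split is
\[
e^{t\mathcal L_\epsilon}\varphi-(e^{t\mathcal L}\varphi^\wedge)^\vee
=e^{t\mathcal L_\epsilon}\bigl[\varphi-(\varphi^\wedge)^\vee\bigr]
+\bigl[e^{t\mathcal L_\epsilon}(\varphi^\wedge)^\vee-(e^{t\mathcal L}\varphi^\wedge)^\vee\bigr].
\]
The second bracket is where Freidlin--Wentzell applies: first pointwise in $x$ for $\varphi^\wedge$ bounded and continuous on $\Gamma$, then dominated convergence using \eqref{eq:IkTk}, then density in $L^2_{\beta\gamma}(\Gamma)$. The first bracket requires $\E[\psi(X_\epsilon^x(t))]\to0$ for $t>0$ whenever $\psi^\wedge=0$. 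That is, $X_\epsilon(t)$ must equidistribute along level sets, and this is where $\beta_k'\neq0$ enters.

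Relatedly, your phrase that the fast rotation mixes within each $\mathrm C_k(z)$ is inaccurate. The flow on a single closed orbit is periodic and does not mix. It is the energy dependence of the period, combined with the transversal diffusion, that spreads the phase.
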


\subsection{Exponential Euler approximation for (1.1)}\label{S:discretelimit}
In this subsection, we recall the exponential Euler approximation of
\eqref{eq:SRDA-intro} and its fast-advection limit obtained
in \cite{CS24}.
Let $0=t_0<t_1<\cdots<t_N=T$ be a uniform partition of $[0,T]$ with step size $\tau=T/N$, where $N$ is a sufficiently large positive integer.
Denote by $\{U^n_\epsilon\}_{n=0}^N$ the exponential Euler approximation of $\{u_\epsilon(t_n)\}_{n=0}^N$ generated via
\begin{equation}\label{eq:EEMmult}
U^n_\epsilon=e^{\tau\mathcal{L}_\epsilon}U^{n-1}_\epsilon+\tau e^{\tau\mathcal{L}_\epsilon}\tilde{B}(U^{n-1}_\epsilon)
	+e^{\tau\mathcal{L}_\epsilon}\tilde{G}(U^{n-1}_\epsilon)(\mathcal{W}(t_{n})-\mathcal{W}(t_{n-1}))
	\end{equation}
	for $n=1,2,\ldots,N$,
where $\tilde{B}:\mathbb{H}_\gamma\to \mathbb{H}_\gamma$ and $\tilde G:\mathbb{H}_\gamma\to\mathscr{L}_2(\mathcal{V}_0,\mathbb{H}_\gamma)$ are the Nemytskii operators associated with $b$ and $g$, respectively.
Here $\mathcal{V}_0$ is the closed space spanned by $\{\widehat{{\mathfrak{u}}_j \mu}\}_{j=1}^\infty$, and it is the reproducing kernel Hilbert space of $\mathcal{W}$.
The convergence analysis of the numerical scheme \eqref{eq:EEMmult} was carried out under the following assumption.

\begin{assumption}
\label{Asp:gamma-S}
Let $\gamma,\vartheta$ satisfy Assumption \ref{Asp:gamma} and let $z^2\vartheta(z)$ be uniformly bounded with respect to $z\in[0,\infty)$. Moreover, Assumption \ref{Asp:gamma} and \eqref{eq:IkTk} hold with $\gamma$ and $\vartheta$ replaced by $\sqrt{\gamma}$ and $\sqrt{\vartheta}$, respectively.
\end{assumption}

The following example provides $\gamma$ satisfying
Assumptions \ref{Asp:gamma} and \ref{Asp:gamma-S}.
  \begin{example}\label{Ex:gamma-S1}
 Let $\gamma(z,k)=\vartheta(z)$ for every $(z,k)\in\Gamma$, where
$\vartheta:[0,\infty)\to(0,\infty)$ is twice differentiable. Moreover, there exist  $c_0>0$, $\lambda>2$ and $\lambda_1>0$ such that
\begin{equation*}
\vartheta(z)=c_0z^{-\lambda}\quad \text{or}\quad\vartheta(z)=c_0e^{-\lambda_1(\sqrt{z}-\sqrt{2z_0})}\quad\text{for}\quad z\ge z_0,
\end{equation*}
 where $z_0\ge \max_{1\le i\le m_1}H(\mathbf{x}_i)+1$ is sufficiently large.
\end{example}

The following proposition collects two results from \cite{CS24}:
the convergence of the exponential Euler approximation \eqref{eq:EEMmult} for
fixed $\epsilon$, and the discrete fast-advection limit for fixed
$N$.

\begin{proposition}[{\cite[Theorems 1 \& 2]{CS24}}]\label{theo:MS-2D}
Let $b$ and $g$ be continuously differentiable with bounded derivatives, and let Assumption \ref{Asp:gamma-S} hold. 

(1) If
 $\int_{\R^2}|\chi(x)|^2\sqrt{\vartheta(H(x))}\ud x<\infty,\nabla\chi\in \mathbb{H}_\gamma $ and  ${\int_{\R^2}|\xi|^2\ud\mu<\infty}$, 
then there exists $C>0$ such that for any $\epsilon\in(0,1]$ and $n=0,1,\ldots,N$,
 \begin{equation*}
\mathbb{E}\left[\|u_{\epsilon}(t_n)-U_{\epsilon}^{n}\|^2_{\mathbb{H}_\gamma}\right]
\le C \tau(1+\epsilon^{-2}).
\end{equation*}

(2) If \eqref{eq:Tneq0} 
holds, $\chi\in \mathbb{H}_\gamma$, and $\chi=(\chi^{\wedge})^{\vee}$, then
for any fixed $\tau>0$ and $p\ge2$,
\begin{equation*}
\lim_{\epsilon\to 0}\E\bigg[\sup_{1\le n\le N}\|U_{\epsilon}^{n}-(U^{n})^{\vee}\|_{\mathbb{H}_\gamma}^p\bigg]=0,
\end{equation*}
where $\{U^n\}_{n=0}^N$ is the exponential Euler approximation for \eqref{eq:intro}, given by  \eqref{eq:Un-intro}.
\end{proposition}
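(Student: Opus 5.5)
For part (1), which fixes $\epsilon$, I would prove the standard strong-error estimate for the exponential Euler scheme while keeping track of every power of $\epsilon^{-1}$. For part (2), which fixes $\tau$, I would argue by induction over the finitely many steps, using the continuous averaging limit of the semigroup $e^{\tau\mathcal L_\epsilon}$.

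\emph{Part (1).} I would write $u_\epsilon(t_n)$ in mild form and write $U^n_\epsilon$ as a discrete mild form with piecewise-constant integrands:
\begin{equation*}
U^n_\epsilon=e^{t_n\mathcal L_\epsilon}\chi+\int_0^{t_n}e^{(t_n-\lfloor s\rfloor_\tau)\mathcal L_\epsilon}\tilde B(U_\epsilon^{\lfloor s\rfloor})\,\ud s+\int_0^{t_n}e^{(t_n-\lfloor s\rfloor_\tau)\mathcal L_\epsilon}\tilde G(U_\epsilon^{\lfloor s\rfloor})\,\ud\mathcal W(s).
\end{equation*}
The error then splits into two kinds of terms:
\begin{itemize}
\item[(a)] semigroup-mismatch terms of the form $(e^{(t_n-s)\mathcal L_\epsilon}-e^{(t_n-\lfloor s\rfloor_\tau)\mathcal L_\epsilon})\tilde B(u_\epsilon(s))$, together with the analogous term for $\tilde G$;
\item[(b)] Lipschitz terms of the form $e^{\cdot\,\mathcal L_\epsilon}(\tilde B(u_\epsilon(s))-\tilde B(U^{\lfloor s\rfloor}_\epsilon))$, together with the analogous term for $\tilde G$.
\end{itemize}
The terms in (b) are controlled by the uniform bound $\|e^{t\mathcal L_\epsilon}\|_{\mathscr L(\mathbb H_\gamma)}\le C$, the It\^o isometry, \eqref{eq:ujmu1}, and the temporal Hölder bound $\E\|u_\epsilon(s)-u_\epsilon(\lfloor s\rfloor_\tau)\|^2_{\mathbb H_\gamma}\le C\tau(1+\epsilon^{-2})$. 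A discrete Gronwall argument then closes the estimate.

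Both the terms in (a) and the Hölder bound reduce to estimating $\|(e^{h\mathcal L_\epsilon}-I)v\|_{\mathbb H_\gamma}$ for $v$ with $\nabla v\in\mathbb H_\gamma$. I would write
\begin{equation*}
(e^{h\mathcal L_\epsilon}-I)v=\int_0^h e^{r\mathcal L_\epsilon}\Bigl(\tfrac12\Delta+\tfrac1\epsilon\nabla^\perp H\cdot\nabla\Bigr)v\,\ud r .
\end{equation*}
The diffusive part would give $h^{1/2}\|\nabla v\|$ through an energy estimate. The advective part would give $h\epsilon^{-1}\|\,|\nabla^\perp H|\,|\nabla v|\,\|$.

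Because $|\nabla H|\lesssim |x|\lesssim\sqrt{H}$, the advective part is where Assumption~\ref{Asp:gamma-S} enters. The condition that $z^2\vartheta(z)$ is bounded, together with the $\sqrt{\vartheta}$-integrability of $\chi$, absorbs the linear growth of the velocity field. The condition $\int|\xi|^2\ud\mu<\infty$ makes $\nabla\mathcal W$ square-summable, and this propagates gradient bounds $\E\|\nabla u_\epsilon(t)\|^2_{\mathbb H_\gamma}\le C$ uniformly in $t$ via Itô's formula for $\|\nabla u_\epsilon\|^2$. In that Itô computation the antisymmetric advection term contributes only commutator terms involving $\nabla^2 H$, which is bounded. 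I expect this weighted gradient estimate to be the main obstacle. In particular, one must check that the commutator $[\nabla,\nabla^\perp H\cdot\nabla]$ and the derivatives of the weight $\vartheta(H)$ produce terms controlled by Assumption~\ref{Asp:gamma}, with at worst an $\epsilon^{-1}$ loss. That loss is what generates the factor $\epsilon^{-2}$ in the squared error.

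\emph{Part (2).} I would induct on $n$ and decompose
\begin{multline*}
U_\epsilon^n-(U^n)^\vee=e^{\tau\mathcal L_\epsilon}\bigl(U^{n-1}_\epsilon-(U^{n-1})^\vee\bigr)
+\bigl[e^{\tau\mathcal L_\epsilon}\psi^\vee-(e^{\tau\mathcal L}\psi)^\vee\bigr]_{\psi=U^{n-1}+\tau B(U^{n-1})}\\
+\sum_j\bigl[e^{\tau\mathcal L_\epsilon}\bigl(g(U^{n-1})^\vee\widehat{\mathfrak u_j\mu}\bigr)-\bigl(e^{\tau\mathcal L}\bigl(g(U^{n-1})(\widehat{\mathfrak u_j\mu})^\wedge\bigr)\bigr)^\vee\bigr]\delta\tilde{\boldsymbol\beta}_j+R_\epsilon,
\end{multline*}
where $R_\epsilon$ collects the Lipschitz differences $\tilde B(U^{n-1}_\epsilon)-\tilde B((U^{n-1})^\vee)$ and $\tilde G(U^{n-1}_\epsilon)-\tilde G((U^{n-1})^\vee)$.

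The first term and $R_\epsilon$ tend to zero by the induction hypothesis together with the uniform semigroup bound. Here I use $\chi=(\chi^\wedge)^\vee$ for the base case $n=0$.

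The remaining terms rely on the deterministic averaging result from \cite{CF19}: under \eqref{eq:Tneq0}, for fixed $t>0$ and $\varphi\in\mathbb H_\gamma$,
\begin{equation*}
e^{t\mathcal L_\epsilon}\varphi\to(e^{t\mathcal L}\varphi^\wedge)^\vee\quad\text{in }\mathbb H_\gamma .
\end{equation*}
I apply this with $\varphi=f^\vee\phi$, noting that $(f^\vee\phi)^\wedge=f\,\phi^\wedge$. Since $U^{n-1}$ does not depend on $\epsilon$, the convergence holds $\omega$-wise.

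To handle the infinite sum over $j$, I would truncate it. The tail is bounded uniformly in $\epsilon$ by independence of the increments and the pointwise bounds \eqref{eq:ujmu1}--\eqref{eq:ujmu2}.

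Finally, the $L^p$ convergence and the supremum over the finite set $\{1,\dots,N\}$ follow from dominated convergence, or from Vitali's theorem using uniform-in-$\epsilon$ moment bounds of higher order for $U^n_\epsilon$ and $U^n$. The Gaussian increments supply those bounds via the Burkholder inequality.
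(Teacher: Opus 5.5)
\textbf{Context and part (2).} The paper does not prove this proposition; both parts are quoted from \cite[Theorems~1 \& 2]{CS24}, so your proposal has to stand on its own. Part~(2) does. It consists of the induction over the finitely many steps with base case $\chi=(\chi^\wedge)^\vee$, and the identity $(f^\vee\phi)^\wedge=f\,\phi^\wedge$. The key input is the averaging limit $e^{t\mathcal L_\epsilon}\varphi\to(e^{t\mathcal L}\varphi^\wedge)^\vee$ in $\mathbb H_\gamma$, for fixed $t>0$ and general $\varphi\in\mathbb H_\gamma$; this is where \eqref{eq:Tneq0} is needed. The convergence then follows by dominated convergence, with the summable majorant $\sum_j\|g(U^{n-1})^\vee\widehat{\mathfrak u_j\mu}\|^2_{\mathbb H_\gamma}\le\mu(\R^2)\|g(U^{n-1})\|^2_{L^2_{\beta\gamma}(\Gamma)}$ and conditional Gaussian moment bounds for $p>2$.

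\textbf{The gap in part (1).} The gap sits at the step you identify as central, the estimate of $\|(e^{h\mathcal L_\epsilon}-I)v\|_{\mathbb H_\gamma}$. Two things fail.

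First, your advective bound $h\epsilon^{-1}\|\,|\nabla^\perp H|\,|\nabla v|\,\|_{\mathbb H_\gamma}$ puts the linearly growing velocity on $\nabla v$. It therefore requires $\int_{\R^2}|\nabla H|^2|\nabla v|^2\vartheta(H)\,\ud x<\infty$, a gradient bound with the growing weight $|\nabla H|^2\vartheta(H)\sim H\vartheta(H)$. The hypothesis is only $\nabla\chi\in\mathbb H_\gamma$, and neither diffusion nor transport along level sets creates extra decay at infinity. The $\sqrt\vartheta$-integrability of $\chi$ controls $\chi$, not $\nabla\chi$, so it cannot absorb the growth in your formula.

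Second, the It\^o--Gronwall argument for $\|\nabla u_\epsilon\|^2_{\mathbb H_\gamma}$ does not give a polynomial loss. The commutator term $\epsilon^{-1}\langle (D\nabla^\perp H)^\top\nabla u,\nabla u\rangle_{\mathbb H_\gamma}$ has size $C\epsilon^{-1}\|\nabla u\|^2_{\mathbb H_\gamma}$ and is indefinite: its quadratic form $-H_{12}(\xi_1^2-\xi_2^2)+(H_{11}-H_{22})\xi_1\xi_2$ vanishes only where $\nabla^2H$ is a multiple of the identity. Gronwall on $[0,T]$ therefore yields a factor $e^{CT/\epsilon}$, not an $\epsilon^{-1}$ loss.

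\textbf{The missing idea.} Treat the advection by the same energy/duality argument you use for the diffusion. Set $w(t)=e^{t\mathcal L_\epsilon}v-v$. Since $\operatorname{div}\nabla^\perp H=0$ and $\nabla^\perp H\cdot\nabla\vartheta(H)=0$, integration by parts gives $\epsilon^{-1}\langle\nabla^\perp H\cdot\nabla v,w\rangle_{\mathbb H_\gamma}=-\epsilon^{-1}\langle v,\nabla^\perp H\cdot\nabla w\rangle_{\mathbb H_\gamma}$. Absorb $\|\nabla w\|_{\mathbb H_\gamma}$ into the dissipation. Then use $|\nabla H|^2\vartheta(H)\le C(1+H)\vartheta(H)\le C\sqrt{\vartheta(H)}$, which follows from boundedness of $\vartheta$ and of $z^2\vartheta$. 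This gives
\[
\|(e^{h\mathcal L_\epsilon}-I)v\|_{\mathbb H_\gamma}^2\le Ch\bigl(\|\nabla v\|_{\mathbb H_\gamma}^2+(1+\epsilon^{-2})\|v\|_{\mathbb H_{\sqrt\gamma}}^2\bigr).
\]
This is exactly where the hypotheses of (1) enter. The $\sqrt\gamma$-versions of Assumption~\ref{Asp:gamma} and \eqref{eq:IkTk} give moment bounds for $u_\epsilon$ in $\mathbb H_{\sqrt\gamma}$ and make $\tilde G$ map into $\mathscr L_2(\mathcal V_0,\mathbb H_{\sqrt\gamma})$. The factor $\epsilon^{-2}$ is just the squared advection coefficient.

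\textbf{Gradient control without global Gronwall.} Avoid global Gronwall and use the $\epsilon$-uniform bound $\E\int_0^T\|\nabla u_\epsilon\|^2_{\mathbb H_\gamma}\,\ud t\le C$. It comes from the $L^2_\gamma$ energy identity, in which the advection cancels exactly and $|\Delta\vartheta(H)|\le C\vartheta(H)$ by Assumption~\ref{Asp:gamma}. This suffices for the semigroup-mismatch terms. Where a gradient at a grid point is needed, in the H\"older term, Gronwall over a single step costs only $e^{C\tau/\epsilon}$. That is harmless, because the asserted bound is trivial unless $\tau\le\epsilon^2$. The step $j=0$ is where $\nabla\chi\in\mathbb H_\gamma$ is used.
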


 Combining 
Propositions~\ref{tho:faa} and \ref{theo:MS-2D}, the remaining  ingredient in the commutative
diagram of Fig.~\ref{fig.doubleDiagram} is the strong convergence of
$
U^N$ to $u(T)$ in $L^2(\Omega,L^2_{\beta\gamma}(\Gamma))$,
which will be established in Section~\ref{S4}.
\section{Stochastic PDE on the noncompact {\color{black}metric graph}}\label{S3}
In this section, we recall the well-posedness framework for the stochastic PDE
\eqref{eq:intro} on the noncompact {\color{black}metric graph} $\Gamma$ and establish the regularity
estimates needed below. In particular, the optimal temporal H\"older regularity estimate of \eqref{eq:intro} will play a key role in the convergence analysis of the exponential Euler method \eqref{eq:Un-intro} in section~\ref{S4}.

According to \cite{FW12}, the differential operator $\mathcal{L}$ in \eqref{eq:intro} is defined by 
\begin{equation}\label{eq:barL}
	\mathcal{L} f(z,k)
	= \frac{1}{2\beta_k(z)} \frac{\ud}{\ud z}
	\Bigl( \alpha_k(z) \frac{\ud f}{\ud z}(z,k) \Bigr),
	\qquad \text{if }(z,k) \in \mathring{I}_k.
\end{equation} The coefficients in \eqref{eq:barL} are given by
\begin{equation}\label{eq:AT}
	\alpha_k(z)=\oint_{\textup{C}_k(z)} |\nabla  H (x)| \ud l_{z, k},\quad \beta_k(z)=\oint_{\textup{C}_k(z)} \frac{1}{|\nabla  H (x)|}\ud l_{z,k},
\end{equation}
where $\ud l_{z,k}$ is the length element on $\textup{C}_k(z)$.
The operator $\mathcal{L}$ is subject to the following gluing conditions: $f$ is required to be continuous at each interior vertex $O_i $ and to satisfy the  Kirchhoff condition:
\begin{equation}\label{eq:glue}
	\sum_{k: I_k \sim O_i} 
	\alpha_k(H (\mathbf{x}_i))\ud_k f(H (\mathbf{x}_i),k) = 0 
	\quad \text{for each interior vertex } O_i,
\end{equation}
where the oriented derivative $\ud_kf(H (\mathbf{x}_i),k):=\Phi_{k,i}\frac{\ud}{\ud z}f(H (\mathbf{x}_i),k)$ with $\Phi_{k,i}=+1$ if $O_i$  is incident to $I_k$ and the $z$-coordinate decreases along $I_k$ toward $O_i$, and $\Phi_{k,i}=-1$ otherwise.

Two functions $\varphi$ and $\psi$ are said to be asymptotically equivalent as $x\to x_0$ if $\lim_{x\to x_0}\varphi(x)/\psi(x)=1$, in which case we write $\varphi(x)\sim\psi(x)$ as $x\to x_0$. 
\begin{remark}\label{rem:AT}
	By \cite[Lemma 1.1]{FW12}, the functions $\beta_k$ and $\alpha_k$ are continuously differentiable in the interior $\mathring{I}_k$ of $I_k$. 
	In contrast, $\alpha_k$ and $\beta_k$ may be degenerate or singular near the vertices. More precisely, if $(z,k)$ approaches an endpoint of an edge $I_k$, corresponding to a vertex $O_i=(H (\mathbf{x}_i),k)$, then as $(z,k)\to O_i$ (see \cite[pp.~266-267]{FW12} and \cite[p.~501]{CF19}),
	\begin{align*}%
			&\alpha_k(z)\sim\begin{cases} \textup{const}\cdot|z-H (\mathbf{x}_i)|,\quad &\text{if $\mathbf{x}_i$ is a local extremum point},\\
					\textup{const},\quad& \text{if $\mathbf{x}_i$ is a saddle point},\\
					\textup{const}\cdot z,\quad &\text{if $O_i=O_\infty$};
				\end{cases}\\%
			&\beta_k(z)\sim\begin{cases}\textup{const},&\text{if $\mathbf{x}_i$ is a local extremum point},\\
					\textup{const}\cdot|\log|z- H (\mathbf{x}_i)||,& \text{if $\mathbf{x}_i$ is a saddle point},\\
					\textup{const},& \text{if $O_i=O_\infty$}.
				\end{cases}
		\end{align*}
	Here \textup{const} denotes some positive constant depending on $k$ and $O_i$, and may differ from line to line.
\end{remark}

 \subsection{Well-posedness}\label{S3.1}
In this subsection, we recall from \cite{CKS26} the well-posedness framework
for \eqref{eq:intro}.
Denote by  $W^{1,2}_{\alpha\gamma,\beta\gamma}(\Gamma)$ the subspace of $L^2_{\beta\gamma}(\Gamma)$ endowed with the inner product 
\begin{equation*}
\langle \phi,\psi\rangle_{W^{1,2}_{\alpha\gamma,\beta\gamma}(\Gamma)}:=\sum_{k=1}^m\int_{I_k} \Big(\phi(z,k)\overline{\psi(z,k)} \beta_k(z) + \alpha_k(z) \frac{\mathrm{d}}{\mathrm{d} z} \phi(z,k)\frac{\mathrm{d}}{\mathrm{d} z} \overline{\psi(z,k)}\Big) \gamma_k(z) \mathrm{d}z.
\end{equation*}
	Let $W^{1,2}_{\mathrm{cont}}(\Gamma)$ denote the subspace of $W^{1,2}_{\alpha\gamma,\beta\gamma}(\Gamma)$ consisting of functions that are continuous at each interior vertex. By Remark \ref{rem:AT}, $W^{1,2}_{\mathrm{cont}}(\Gamma)$ is a closed subspace of $W^{1,2}_{\alpha\gamma,\beta\gamma}(\Gamma)$ (see \cite[Section 3]{CKS26} for more details). Recall that, under Assumption \ref{Asp:gamma}, the SPDE \eqref{eq:intro} on $\Gamma$ is well posed and serves as the limiting equation for the multiscale model \eqref{eq:SRDA-intro}; see Proposition \ref{tho:faa}. In fact, for the well-posedness of \eqref{eq:intro}, it suffices to impose the following assumption.

\begin{assumption}\label{asp:aT}
	For every edge $I_k$, the function $\gamma_k$ is locally absolutely
	continuous on $\mathring I_k$. Moreover, there exists a constant
	$\kappa>0$ such that
	\begin{equation*}%
		\alpha_k(z)|\gamma_k^\prime(z)|^2\le \kappa \beta_k(z)|\gamma_k(z)|^2\quad \text{for }(z,k)\in \mathring{I}_k,~ k=1,\ldots,m.
	\end{equation*}
\end{assumption}

By Remark~\ref{rem:AT}, Assumption~\ref{asp:aT} is weaker than Assumption~\ref{Asp:gamma}. Moreover, Assumption~\ref{Asp:gamma-S} implies both Assumption~\ref{Asp:gamma} and Assumption~\ref{asp:aT}.

\begin{lemma}{\cite[Theorem 3.3]{CKS26}}\label{thm:3.3}
	Under Assumption \ref{asp:aT}, for any $\lambda>\frac{1}{8}\kappa$, the operator $-\lambda I+\mathcal{L}$ generates an analytic contraction semigroup on $L^2_{\beta\gamma}(\Gamma)$, where
	\begin{align}\label{eq:DL}
D(\mathcal L)=\Bigl\{f\in W^{1,2}_{\mathrm{cont}}(\Gamma):
 f\text{ satisfies }\eqref{eq:glue},
 \alpha_k \frac{\ud f}{\ud z} \text{ is locally absolutely}\\ \text{ continuous on each } I_k
\text{ and } \mathcal Lf
 \in L^2_{\beta\gamma}(\Gamma)\Bigr\}.\notag
	\end{align}
\end{lemma}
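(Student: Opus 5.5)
The plan is to use the Lumer--Phillips theorem together with the characterization of analytic contraction semigroups by sectorial sesquilinear forms (Kato's form method). We work in the Hilbert space $L^2_{\beta\gamma}(\Gamma)$ with inner product $\langle f,g\rangle=\sum_k\int_{I_k} f\bar g\,\beta_k\gamma_k\,\ud z$. On $V:=W^{1,2}_{\mathrm{cont}}(\Gamma)$ we define the form
\begin{equation*}
\mathfrak a(f,g):=\frac12\sum_{k=1}^m\int_{I_k}\alpha_k f'\,\overline{(g\gamma_k)'}\,\ud z
=\frac12\sum_{k}\int_{I_k}\alpha_k f'\bar g'\gamma_k\,\ud z+\frac12\sum_k\int_{I_k}\alpha_k f'\bar g\,\gamma_k'\,\ud z .
\end{equation*}
For $f\in D(\mathcal L)$ and $g\in V$, integration by parts on each edge of $\frac{1}{2\beta_k}(\alpha_kf')'\,\bar g\,\beta_k\gamma_k$ gives $\langle -\mathcal Lf,g\rangle=\mathfrak a(f,g)$. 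The boundary terms at interior vertices cancel because $g$ is continuous and $f$ satisfies the Kirchhoff condition \eqref{eq:glue}. At exterior vertices and at $O_\infty$ they vanish because $\alpha_k\to0$ there (Remark \ref{rem:AT}), combined with the integrability encoded in $f\in W^{1,2}_{\alpha\gamma,\beta\gamma}$ and $\mathcal Lf\in L^2_{\beta\gamma}$. This is the technical point of the argument: we first show that $\alpha_k f'$ has a limit at each endpoint, and then that $\alpha_kf'\bar g\gamma_k\to0$ along a suitable sequence, using an approximation by compactly supported $g$.

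Next we control the perturbation term using Assumption \ref{asp:aT}. Cauchy--Schwarz and Young's inequality give
\begin{equation*}
\Big|\tfrac12\sum_k\int\alpha_k f'\bar f\gamma_k'\Big|\le \tfrac12\Big(\sum_k\int\alpha_k|f'|^2\gamma_k\Big)^{1/2}\Big(\kappa\sum_k\int\beta_k|f|^2\gamma_k\Big)^{1/2}\le \tfrac{\delta}{2}\int\alpha|f'|^2\gamma+\tfrac{\kappa}{8\delta}\|f\|^2 .
\end{equation*}
Taking $\delta=1$ yields
\begin{equation*}
\mathrm{Re}\,\mathfrak a(f,f)+\lambda\|f\|^2\ge \big(\lambda-\tfrac{\kappa}{8}\big)\|f\|^2\ge0
\quad\text{for }\lambda>\kappa/8 .
\end{equation*}
So $\lambda I-\mathcal L$ is accretive, and indeed coercive in the $V$-norm after absorbing part of the gradient term with $\delta<1$. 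The same estimate gives $|\mathrm{Im}\,\mathfrak a(f,f)|\le C(\mathrm{Re}\,\mathfrak a(f,f)+\lambda\|f\|^2)$, so the shifted form $\mathfrak a_\lambda$ is continuous on $V$, closed (since $V$ is closed), and sectorial.

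Finally we use density: $V$ contains compactly supported smooth continuous functions on the graph, which are dense in $L^2_{\beta\gamma}$. Kato's representation theorem then produces an m-sectorial operator $A_\lambda$ associated with $\mathfrak a_\lambda$, and $-A_\lambda$ generates an analytic contraction semigroup. It remains to identify $-A_\lambda=\mathcal L-\lambda I$ with domain \eqref{eq:DL}. The inclusion $D(\mathcal L)\subset D(A_\lambda)$ follows from the integration by parts above. For the reverse inclusion, take $f\in D(A_\lambda)$ and test against $g\in C_c^\infty(\mathring I_k)$. This shows that $\alpha_kf'$ is locally absolutely continuous and that $\mathcal Lf=-A f\in L^2_{\beta\gamma}$. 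Testing then with $g\in V$ supported near an interior vertex, and taking $g$ equal to $1$ at that vertex, recovers the Kirchhoff condition. The main obstacle is this boundary analysis at degenerate and singular vertices; there we would rely on the asymptotics of Remark \ref{rem:AT}, where $\beta_k$ is logarithmically singular at saddles and $\alpha_k$ vanishes linearly at extrema.
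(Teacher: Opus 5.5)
Your plan is essentially the paper's proof. It uses the same form $a_\lambda$ on $W^{1,2}_{\mathrm{cont}}(\Gamma)$ and the same Cauchy--Schwarz bound $\frac{\sqrt\kappa}{2}XY$ from Assumption~\ref{asp:aT}, which gives the threshold $\lambda>\kappa/8$. It then applies the same generation theorem for densely defined, continuous, closed, accretive forms. The paper only asserts that the associated operator is $\lambda I-\mathcal L$ with domain \eqref{eq:DL}, deferring to \cite{CKS26}, whereas you sketch that identification.

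One correction is needed in that sketch. At $O_\infty$, Remark~\ref{rem:AT} gives $\alpha_m(z)\sim\mathrm{const}\cdot z\to\infty$, not $\alpha_m\to0$. Moreover, $\beta_m\gamma_m$ is integrable at infinity while $\beta_m/\gamma_m$ is not, so $(\alpha_mf')'=2\beta_m\mathcal Lf$ need not be integrable there, and $\alpha_mf'$ need not have a limit at $O_\infty$. The boundary term at infinity must therefore be handled by your fallback. Since $\alpha_m\le Cz\beta_m$ for $z\ge z_0\ge1$,
\[
\int_{z_0}^\infty\bigl|\alpha_mf'\overline{g}\gamma_m\bigr|\,\frac{\ud z}{z}
\le\Big(\int_{z_0}^\infty\alpha_m\gamma_m|f'|^2\,\ud z\Big)^{1/2}
\Big(C\int_{z_0}^\infty\beta_m\gamma_m|g|^2\,\ud z\Big)^{1/2}<\infty .
\]
Hence $\alpha_mf'\overline{g}\gamma_m\to0$ along some sequence $z_n\to\infty$. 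Equivalently, cutoffs $\chi_R$ with $|\chi_R'|\le C/R$ on $[R,2R]$ show that functions vanishing near $O_\infty$ are dense in $W^{1,2}_{\mathrm{cont}}(\Gamma)$.

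At the extremal vertices your argument goes through. There $\gamma$ is bounded below, so $\alpha_kf'$ converges, and its limit must be $0$ because $\int\alpha_k|f'|^2\gamma_k\,\ud z<\infty$. In fact $|\alpha_kf'|=O(|z-H(\mathbf x_i)|^{1/2})$, which dominates the at most $|\log|z-H(\mathbf x_i)||^{1/2}$ growth of $g\in W^{1,2}_{\mathrm{cont}}(\Gamma)$.
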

\begin{proof}
We sketch the main ingredients of the proof, which will also be used later in the numerical error analysis, and refer to \cite{CKS26} for full details.
For $\lambda\in\mathbb{R}$, consider the sesquilinear form 
	$a_{\lambda}: W^{1,2}_{\mathrm{cont}}(\Gamma)\times W^{1,2}_{\mathrm{cont}}(\Gamma)\to \mathbb{C}$ defined by
		\begin{align*}%
		a_{\lambda}(\phi,\psi)
		&=\lambda \sum_{k=1}^m \int_{I_k}
		\phi(z,k)\overline{\psi(z,k)}\beta_k(z)\gamma_k(z)\,\mathrm{d}z\\
		&\quad+\frac{1}{2}\sum_{k=1}^m\int_{I_k}
		\alpha_k(z)\frac{\mathrm{d}}{\mathrm{d}z}\phi(z,k)
		\frac{\mathrm{d}}{\mathrm{d}z}
		\bigl(\overline{\psi(z,k)}\gamma_k(z)\bigr)\,\mathrm{d}z.
		\end{align*}	
	To see the origin of the threshold, set
	\begin{equation*}
	X:=\bigg(\sum_{k=1}^m\int_{I_k}\alpha_k|\phi'|^2\gamma_k\,\ud z\bigg)^{1/2},
	\qquad
	Y:=\bigg(\sum_{k=1}^m\int_{I_k}|\phi|^2\beta_k\gamma_k\,\ud z\bigg)^{1/2}.
	\end{equation*}
Assumption~\ref{asp:aT} and the Cauchy--Schwarz inequality give
	\begin{equation*}
	\bigg|\frac12\sum_{k=1}^m\int_{I_k}
	\alpha_k\phi'\overline\phi\,\gamma_k'\,\ud z\bigg|
	\leq \frac{\sqrt\kappa}{2}XY.
	\end{equation*}
Consequently,
	$\operatorname{Re}a_\lambda(\phi,\phi)
	\geq \frac12X^2-\frac{\sqrt\kappa}{2}XY+\lambda Y^2$,
	and this quadratic form is coercive precisely when $\lambda>\kappa/8$.
	The same estimate, without taking real parts, proves continuity.
	For $\lambda > \frac{1}{8} \kappa$, where $\kappa$ is given in Assumption \ref{asp:aT},	the sesquilinear form $a_\lambda$ is densely defined, continuous, closed, and accretive. Moreover, 
	the operator associated with $a_\lambda$ via the standard form-operator correspondence is precisely $\lambda I - \mathcal{L}$ with domain given by \eqref{eq:DL}, namely 
	\begin{equation*}
	a_{\lambda}(\phi,\psi)=\langle (\lambda I - \mathcal{L})\phi,\psi\rangle_{L^2_{\beta\gamma}(\Gamma)},\quad \phi\in D(\mathcal{L}),\psi\in W^{1,2}_{\mathrm{cont}}(\Gamma).
	\end{equation*}
	In particular, for any $\lambda>\frac{1}{8}\kappa$, there exist positive constants $C(\lambda)$ and $c(\lambda)$ such that 
	\begin{align}\label{eq:lower} \Re a_\lambda(\phi,\phi) &\ge c(\lambda)\|\phi\|_{W^{1,2}_{\alpha\gamma,\beta\gamma}(\Gamma)}^2\quad\forall~\phi\in W^{1,2}_{\mathrm{cont}}(\Gamma),\\\label{eq:aconti} |a_{\lambda}(\phi,\psi)|&\le C(\lambda)\|\phi\|_{W^{1,2}_{\alpha\gamma,\beta\gamma}(\Gamma)}\|\psi\|_{W^{1,2}_{\alpha\gamma,\beta\gamma}(\Gamma)}\quad\forall~ \phi,\psi\in W^{1,2}_{\mathrm{cont}}(\Gamma). 
	\end{align}
Since the sesquilinear form $a_\lambda$ associated with $\lambda I-\mathcal{L}$ is densely defined, continuous, closed, and accretive, it follows from \cite[Theorems 1.51 \& 1.52]{OE05} that for any $\lambda > \frac{1}{8}\kappa$, the operator $-\lambda I+\mathcal{L}$ generates an analytic contraction semigroup on $L^2_{\beta\gamma}(\Gamma)$.
\end{proof}

By Lemma \ref{thm:3.3}, $\mathcal{L}$ generates an analytic semigroup $\{S(t)\!\!:=\!e^{t\mathcal{L}}\}_{t\ge0}$ on $L^2_{\beta\gamma}(\Gamma)$ and
	\begin{equation}\label{eq:Stbound}
		\|S(t)\|_{\mathscr{L}(L^2_{\beta\gamma}(\Gamma))}\le e^{\frac{1}{8}\kappa t},\quad t\ge0.
	\end{equation}
Let $\mathcal{U}_0$ be the reproducing kernel Hilbert space of $W$, and let $K_\Gamma$ denote the covariance kernel of $W(1)$. The projected functions $\{(\widehat{{\mathfrak{u}}_j\mu})^\wedge\}_{j\geq1}$ generate this covariance, but projection need not preserve orthonormality. If $\{e_\ell\}_{\ell\geq1}$ is any orthonormal basis of $\mathcal U_0$, then the reproducing-kernel identity and \eqref{eq:ujmu2} give
\begin{equation}\label{eq:KGamma-diagonal}
 \sum_{\ell=1}^\infty |e_\ell(z,k)|^2
 =K_\Gamma((z,k),(z,k))
 =\sum_{j=1}^\infty
 |(\widehat{{\mathfrak u}_j\mu})^\wedge(z,k)|^2
 \leq \mu(\mathbb R^2).
\end{equation}
Let
$B$ and $G$ be the Nemytskii operators
associated with $b$ and $g$, respectively, namely, 
\begin{equation}\label{eq:BG}
	B(v_1)(z,k)=b(v_1(z,k)),\quad G(v_1)(v_2)(z,k)=g(v_1(z,k))v_2(z,k),\quad (z,k)\in\Gamma,
\end{equation}
where $v_1\in L^2_{\beta\gamma}(\Gamma)$ and $v_2\in \mathcal{U}_0$. In fact,
the mappings $B:L^2_{\beta\gamma}(\Gamma)\to L^2_{\beta\gamma}(\Gamma)$
and $G:L^2_{\beta\gamma}(\Gamma)\to \mathscr{L}_2(\mathcal{U}_0,L^2_{\beta\gamma}(\Gamma))$ are Lipschitz continuous and of linear growth. 
More precisely, there exists $C>0$ such that for any $v_1,v_2\in L^2_{\beta\gamma}(\Gamma)$,
\begin{gather}\label{eq:Lin}
	\|B(v_1)\|_{L^2_{\beta\gamma}(\Gamma)}+\|G(v_1)\|_{\mathscr{L}_2(\mathcal{U}_0,L^2_{\beta\gamma}(\Gamma))}\le C(1+\|v_1\|_{L^2_{\beta\gamma}(\Gamma)}),\\\label{eq:Lip}
	\|B(v_1)-B(v_2)\|_{L^2_{\beta\gamma}(\Gamma)}+\|G(v_1)-G(v_2)\|_{\mathscr{L}_2(\mathcal{U}_0,L^2_{\beta\gamma}(\Gamma))}\le C\|v_1-v_2\|_{L^2_{\beta\gamma}(\Gamma)}.
\end{gather}
We use $G$ to illustrate \eqref{eq:Lin} and \eqref{eq:Lip}.
Since $\mu(\R^2)<\infty$, by \eqref{eq:KGamma-diagonal} and the Lipschitz continuity of $g$, for any $v_1,v_2\in L^2_{\beta\gamma}(\Gamma)$,
\begin{align*} 
&\|G(v_1)-G(v_2)\|^2_{\mathscr{L}_2(\mathcal{U}_0,L^2_{\beta\gamma}(\Gamma))}\\
&=\sum_{k=1}^m\int_{I_k}
 |g(v_1(z,k))-g(v_2(z,k))|^2
 K_\Gamma((z,k),(z,k))\gamma_k(z)\beta_k(z)\ud z\\
&\le\mu(\R^2)\sum_{k=1}^m\int_{I_k}|g(v_1(z,k))-g(v_2(z,k))|^2\gamma_k(z)\beta_k(z)\ud z
\le C\|v_1-v_2\|_{L^2_{\beta\gamma}(\Gamma)}^2.
\end{align*}
In a similar manner, we obtain from \eqref{eq:IkTk} and the linear growth of $g$ that 
\begin{equation*}
\|G(v_1)\|_{\mathscr{L}_2(\mathcal{U}_0,L^2_{\beta\gamma}(\Gamma))}^2\le \mu(\R^2)\sum_{k=1}^m\int_{I_k}|g(v_1(z,k))|^2\gamma_k(z)\beta_k(z)\ud z\le C(1+\|v_1\|_{L^2_{\beta\gamma}(\Gamma)}^2).
\end{equation*} 
In view of \eqref{eq:Lip} and \eqref{eq:Lin}, by standard theory for semilinear stochastic evolution equations (see, e.g., \cite[Chapter 7]{DZ14} and \cite{CHS26}), we have the following result.

\begin{proposition}
Under Assumption \ref{asp:aT},
for every initial value $\chi^\wedge\in L^2_{\beta\gamma}(\Gamma)$,
there exists a unique mild solution
$u=\{u(t)\}_{t\in[0,T]}$ to \eqref{eq:intro} in $L^p(\Omega,\mathcal{C}([0,T];L^2_{\beta\gamma}(\Gamma)))$ such that for any $1\le p<\infty$,
\begin{equation}\label{eq:L2}
	\E\bigg[\sup_{t\in[0,T]}\|u(t)\|^p_{L^2_{\beta\gamma}(\Gamma)}\bigg]\le C(p,T)\Big(1+\|\chi^\wedge\|^p_{L^2_{\beta\gamma}(\Gamma)}\Big).
\end{equation}
\end{proposition}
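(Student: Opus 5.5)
The proof follows the standard Banach fixed-point argument for semilinear stochastic evolution equations in the Hilbert space $L^2_{\beta\gamma}(\Gamma)$, as in \cite[Chapter 7]{DZ14}. The analytic semigroup from Lemma~\ref{thm:3.3} and the bounds \eqref{eq:Lin}--\eqref{eq:Lip} supply the needed structure. For $p\ge2$ (smaller $p$ follow by H\"older's inequality), we work in the Banach space $\mathcal H_p$ of $\{\mathscr F_t\}$-adapted processes $v$ with continuous paths in $L^2_{\beta\gamma}(\Gamma)$. It carries the norm
\begin{equation*}
\|v\|_{\mathcal H_p}:=\Big(\E\Big[\sup_{t\in[0,T]}e^{-\rho pt}\|v(t)\|^p_{L^2_{\beta\gamma}(\Gamma)}\Big]\Big)^{1/p},
\end{equation*}
with $\rho>0$ to be chosen. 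On $\mathcal H_p$ we define the map
\begin{equation*}
\mathcal K(v)(t):=S(t)\chi^\wedge+\int_0^tS(t-s)B(v(s))\,\ud s+\int_0^tS(t-s)G(v(s))\,\ud W(s).
\end{equation*}
A mild solution is exactly a fixed point of $\mathcal K$.

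First we check that $\mathcal K$ maps $\mathcal H_p$ into itself. The deterministic term $S(\cdot)\chi^\wedge$ is continuous by strong continuity of the semigroup, and its size is controlled by \eqref{eq:Stbound}. The drift convolution is continuous, and it is bounded by Minkowski's inequality together with \eqref{eq:Stbound} and \eqref{eq:Lin}. The stochastic convolution needs a maximal inequality. Since $e^{-\lambda t}S(t)$ is a contraction semigroup for $\lambda>\kappa/8$, we write
\begin{equation*}
\int_0^tS(t-s)G\,\ud W=e^{\lambda t}\int_0^te^{-\lambda(t-s)}S(t-s)\,e^{-\lambda s}\cdot e^{\lambda s}G\,\ud W.
\end{equation*}
We then apply the maximal inequality for stochastic convolutions with contraction semigroups (Kotelenez/Hausenblas--Seidler; alternatively the factorization method of Da Prato--Kwapie\'n--Zabczyk, which uses analyticity). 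This yields a continuous version together with the bound
\begin{equation*}
\E\Big[\sup_{t\le T}\Big\|\int_0^tS(t-s)G(v(s))\,\ud W(s)\Big\|^p\Big]\le C\,\E\Big[\Big(\int_0^T\|G(v(s))\|^2_{\mathscr L_2(\mathcal U_0,L^2_{\beta\gamma}(\Gamma))}\,\ud s\Big)^{p/2}\Big],
\end{equation*}
whose right-hand side is finite by \eqref{eq:Lin}.

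Next we show that $\mathcal K$ is a contraction. Using \eqref{eq:Lip} and the same maximal inequality, now with the exponential weight $e^{-\rho t}$ inserted, we estimate $\|\mathcal K(v)-\mathcal K(w)\|_{\mathcal H_p}\le c(\rho)\|v-w\|_{\mathcal H_p}$ with $c(\rho)\to0$ as $\rho\to\infty$. The weight is what makes the constant small. If handling it inside the maximal inequality is awkward, we instead restrict to a short interval $[0,T_0]$, with $T_0$ independent of the initial data, obtain a contraction there, and iterate over $[kT_0,(k+1)T_0]$. Banach's theorem then gives existence and uniqueness in $\mathcal H_p$. Uniqueness among all mild solutions follows from the same Lipschitz estimate combined with Gronwall's inequality.

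For the moment bound \eqref{eq:L2}, we set $\phi(t):=\E\sup_{s\le t}\|u(s)\|^p$ for the fixed point. The estimates above give
\begin{equation*}
\phi(t)\le C\|\chi^\wedge\|^p+C\int_0^t(1+\phi(s))\,\ud s,
\end{equation*}
after H\"older's inequality in time. To make $\phi$ finite a priori, which Gronwall's inequality requires, we either use a stopping-time localization or simply note that the fixed point lies in $\mathcal H_p$. Gronwall's inequality then yields \eqref{eq:L2}.

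The main obstacle is the maximal inequality for the stochastic convolution. Itô's formula cannot be applied directly, because $u$ need not lie in $D(\mathcal L)$ and the degenerate vertex behaviour rules out naive regularity. We handle this with the contraction property of $e^{-\lambda t}S(t)$ from Lemma~\ref{thm:3.3}. If needed, a Yosida approximation $\mathcal L_n=n\mathcal L(n-\mathcal L)^{-1}$ makes It\^o's formula rigorous, after which we pass to the limit $n\to\infty$.
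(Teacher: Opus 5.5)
Your proposal is correct and is essentially the standard Da Prato--Zabczyk argument: the quasi-contractive semigroup from Lemma~\ref{thm:3.3}, the bounds \eqref{eq:Lin}--\eqref{eq:Lip}, a maximal inequality for the stochastic convolution, a fixed point, and Gronwall. The paper does not write this out and simply invokes it by citing \cite[Chapter 7]{DZ14}. One caveat: putting the weight $e^{-\rho t}$ into the contraction-semigroup maximal inequality does not make the stochastic term's Lipschitz constant small, because that inequality's constant is uniform over contraction semigroups and you only get a bound of order $L\,T^{1/2}$, independent of $\rho$; so use your short-interval iteration, or the factorization method, to obtain the contraction (also, the extra factor $e^{\lambda s}$ in your rewriting display should be dropped).
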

\subsection{Regularity estimates} 
Fix $\lambda>\frac{1}{8}\kappa$ and set
\(
\mathcal{A}_\lambda:=\lambda I-\mathcal{L}.
\)
By Lemma \ref{thm:3.3}, $-\mathcal{A}_\lambda$ generates an analytic semigroup
$\{
S_\lambda(t):=e^{-t\mathcal{A}_\lambda},
 t\geq0\}$
on $L^2_{\beta\gamma}(\Gamma)$. Moreover, \cite[Proposition 5.33]{EN00} yields the continuous embedding
\(
D(\mathcal{A}_\lambda^{\varrho_2})
\hookrightarrow
D(\mathcal{A}_\lambda^{\varrho_1}),
0<\varrho_1<\varrho_2<1.  
\) %
The following lemma shows that $S_\lambda$ enjoys the standard parabolic smoothing properties in the weighted space $L^2_{\beta\gamma}(\Gamma)$, despite the noncompactness of $\Gamma$ and the degeneracy of $\mathcal{L}$ near the vertices.%

\begin{lemma}\label{lem:smo}
	Let Assumption \ref{asp:aT} hold and $\lambda>\frac{1}{8}\kappa$. Then for any $\varrho>0$,
	\begin{equation}\label{eq:smo1}
		\|{\mathcal{A}_\lambda^{\varrho}} S_\lambda(t)\|_{\mathscr{L}(L^2_{\beta\gamma}(\Gamma))}\le C(\lambda,\varrho ) t^{-\varrho },\quad t>0.
		\end{equation}
		Moreover, for any $\varrho \in (0,1)$,
		\begin{equation}
		 \label{eq:smo2}
		\|{\mathcal{A}_\lambda^{-\varrho}}(S_\lambda(t)-I)\|_{\mathscr{L}(L^2_{\beta\gamma}(\Gamma))}\le C(\lambda,\varrho )t^{\varrho },\quad t>0.
	\end{equation}
\end{lemma}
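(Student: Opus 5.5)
This is the standard smoothing theory for sectorial operators. The plan is to check that $\mathcal A_\lambda$ is sectorial with a bounded inverse on $L^2_{\beta\gamma}(\Gamma)$, and then apply the abstract results of Pazy (Theorem 2.6.13) or \cite{EN00}.

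\textbf{Step 1: $\mathcal A_\lambda$ is sectorial and invertible.}
From the proof of Lemma \ref{thm:3.3}, the form $a_\lambda$ is continuous and coercive. By \eqref{eq:lower},
\[
\operatorname{Re} a_\lambda(\phi,\phi)\ge c(\lambda)\|\phi\|^2_{W^{1,2}_{\alpha\gamma,\beta\gamma}(\Gamma)}\ge c(\lambda)\|\phi\|^2_{L^2_{\beta\gamma}(\Gamma)}.
\]
Together with \eqref{eq:aconti}, the numerical range of $\mathcal A_\lambda$ lies in a sector
\[
\{\zeta:\ \operatorname{Re}\zeta\ge c(\lambda),\ |\operatorname{Im}\zeta|\le C'\operatorname{Re}\zeta\}.
\]
Consequences:
\begin{itemize}
\item $\mathcal A_\lambda$ is m-sectorial with $0\in\rho(\mathcal A_\lambda)$.
\item $\|\mathcal A_\lambda^{-1}\|\le c(\lambda)^{-1}$.
\item $-\mathcal A_\lambda$ generates a bounded analytic semigroup with exponential decay: $\|S_\lambda(t)\|\le e^{-c(\lambda)t}$. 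This decay comes from applying Lemma \ref{thm:3.3} to a slightly smaller $\lambda'\in(\kappa/8,\lambda)$.
\end{itemize}
So the fractional powers $\mathcal A_\lambda^{\pm\varrho}$ are well defined via the Balakrishnan or Dunford integral, and the semigroup is analytic in a sector.

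\textbf{Step 2: the case $\varrho=1$ in \eqref{eq:smo1}.}
Analyticity and boundedness of the semigroup on a sector give, via the Cauchy formula on a circle of radius proportional to $t$,
\[
\|\mathcal A_\lambda S_\lambda(t)\|=\|S_\lambda'(t)\|\le C t^{-1}.
\]

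\textbf{Step 3: general $\varrho>0$ in \eqref{eq:smo1}.}
For integer $n$, the semigroup property gives
\[
\mathcal A_\lambda^n S_\lambda(t)=\big(\mathcal A_\lambda S_\lambda(t/n)\big)^n,
\]
so the bound is $Ct^{-n}$. For $\varrho\in(0,1)$, use the moment inequality
\[
\|\mathcal A_\lambda^{\varrho}x\|\le C\|x\|^{1-\varrho}\|\mathcal A_\lambda x\|^{\varrho}.
\]
Alternatively, use the Balakrishnan representation
\[
\mathcal A_\lambda^{\varrho}x=\frac{\sin\pi\varrho}{\pi}\int_0^\infty s^{\varrho-1}\mathcal A_\lambda(s+\mathcal A_\lambda)^{-1}x\,\ud s,
\]
and split the integral at $s=1/t$. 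Applied to $x=S_\lambda(t)y$, this gives $Ct^{-\varrho}$. Non-integer $\varrho>1$ follows by writing $\varrho=n+\theta$ and splitting $S_\lambda(t)=S_\lambda(t/2)S_\lambda(t/2)$.

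\textbf{Step 4: the estimate \eqref{eq:smo2}.}
For $\varrho\in(0,1)$, write
\[
\mathcal A_\lambda^{-\varrho}(S_\lambda(t)-I)x=-\int_0^t \mathcal A_\lambda^{1-\varrho}S_\lambda(s)x\,\ud s,
\]
which is valid first for $x\in D(\mathcal A_\lambda)$ and then extends by density. Step 3 with exponent $1-\varrho$ bounds the integrand by $C s^{\varrho-1}\|x\|$. Integrating gives $Ct^{\varrho}\varrho^{-1}\|x\|$.

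\textbf{Main obstacle.}
The main point is Step 1: making sure the semigroup bounds are uniform, with $0$ in the resolvent set and a uniform sector angle. This is what removes the factor $e^{\kappa t/8}$ and the dependence on $t$ beyond the power law. It follows from the coercivity \eqref{eq:lower}, which already absorbs the vertex degeneracy and noncompactness into the weighted form. The rest is abstract semigroup theory.
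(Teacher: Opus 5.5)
Your proof is correct, and its foundation is the paper's. Both arguments read off from \eqref{eq:lower} and \eqref{eq:aconti} that the numerical range of $\mathcal A_\lambda$ lies in a sector of half-angle $\theta_0=\arccos(c(\lambda)/C(\lambda))<\pi/2$. Both also derive \eqref{eq:smo2} from \eqref{eq:smo1} through the identity $\mathcal A_\lambda^{-\varrho}(I-S_\lambda(t))=\int_0^t\mathcal A_\lambda^{1-\varrho}S_\lambda(s)\,\ud s$.

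The difference lies in how you obtain \eqref{eq:smo1}.
\begin{itemize}
\item The paper turns the numerical-range sector into the resolvent bound $\|(yI-\mathcal A_\lambda)^{-1}\|\le [\sin(\theta_1-\theta_0)|y|]^{-1}$ outside a slightly larger sector, via \cite[Proposition C.3.1]{HM06}. It then writes $\mathcal A_\lambda^{\varrho}S_\lambda(t)$ directly as the Dunford--Taylor integral of $y^\varrho e^{-yt}(yI-\mathcal A_\lambda)^{-1}$ over the rays $re^{\pm\mathbf i\theta_1}$. The single estimate $\int_0^\infty r^{\varrho-1}e^{-rt\cos\theta_1}\,\ud r=C(\varrho,\theta_1)t^{-\varrho}$ then covers every $\varrho>0$ and every $t>0$ at once. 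No case distinction, moment inequality, or separate decay argument is needed.
\item You go through the derivative bound $\|\mathcal A_\lambda S_\lambda(t)\|\le C/t$, the semigroup property for integer powers, and the moment inequality (or the Balakrishnan formula split at $s=1/t$) for $\varrho\in(0,1)$. For non-integer $\varrho>1$ you split $S_\lambda(t)=S_\lambda(t/2)^2$. This is longer, but every step is textbook.
\end{itemize}

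In fact, as you note, once your Step 1 is done the whole lemma is a citation of Pazy's Theorem 2.6.13. Its hypotheses are exactly what you verify: $-\mathcal A_\lambda$ generates a bounded analytic semigroup and $0\in\rho(\mathcal A_\lambda)$. Your explicit use of invertibility and exponential decay is what keeps the constants uniform for large $t$ in your route. In the paper's route this uniformity comes automatically from the contour integral.

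One small inconsistency in Step 1: the bound $\|S_\lambda(t)\|\le e^{-c(\lambda)t}$ follows directly from $\operatorname{Re}\langle\mathcal A_\lambda\phi,\phi\rangle\ge c(\lambda)\|\phi\|^2$, since then $\mathcal A_\lambda-c(\lambda)I$ is m-accretive. Applying Lemma \ref{thm:3.3} with some $\lambda'\in(\kappa/8,\lambda)$ instead gives the rate $\lambda-\lambda'$. Either rate suffices for your purposes.
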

\begin{proof}
	For a fixed  $\lambda > \tfrac{1}{8}\kappa$,
	combining \eqref{eq:lower}  and \eqref{eq:aconti} yields   
	\begin{equation*}
		|\langle \mathcal{A}_\lambda \phi,\phi\rangle_{L^2_{\beta\gamma}(\Gamma)}|\le c(\lambda)^{-1}C(\lambda)\Re \langle \mathcal{A}_\lambda \phi,\phi\rangle_{L^2_{\beta\gamma}(\Gamma)},\quad \phi\in D(\mathcal{A}_\lambda)=D(\mathcal{L}),
	\end{equation*}
	where $\Re$ denotes the real part.
Hence, the numerical range $\mathcal{N}(\mathcal{A}_\lambda )$ of $\mathcal{A}_\lambda $ (see \cite[Definition 1.26]{OE05}), defined by
	\begin{equation*}
	\mathcal{N}(\mathcal{A}_\lambda ):=\Big\{\langle \mathcal{A}_\lambda \phi,\phi\rangle_{L^2_{\beta\gamma}(\Gamma)}:\phi\in D(\mathcal{A}_\lambda ), \|\phi\|_{L^2_{\beta\gamma}(\Gamma)}= 1 \Big\},
	\end{equation*}
	is contained in the sector $\Sigma_0(\theta_0):=\{y\in\mathbb{C},y\neq 0,0\le | \arg y|\le \theta_0\}$ where $\theta_0=\arccos(c(\lambda)/C(\lambda))\in[0,{\pi}/{2}).$
	Let $\theta_1\in (\theta_0,{\pi}/{2})$ and denote $\Sigma_\pi(\theta_1):=\{y\in\mathbb{C},\theta_1\le | \arg y|\le \pi\}$. Then by \cite[Proposition C.3.1]{HM06}, the resolvent set $\rho(\mathcal{A}_\lambda )$ of $\mathcal{A}_\lambda $ contains $\Sigma_\pi(\theta_1)$, and for any $y\in \Sigma_\pi(\theta_1)$,
	\begin{equation}\label{eq:analytic}
		\|(y I-\mathcal{A}_\lambda )^{-1}\|_{\mathscr{L}(L^2_{\beta\gamma}(\Gamma))}\!\le\! \frac{1}{\textup{dist}(y,\overline{\mathcal{N}(\mathcal{A}_\lambda )})}\!\le\!\frac{1}{\textup{dist}(y,\Sigma_0(\theta_0))}\le \frac{1}{\sin(\theta_1-\theta_0)}\frac{1}{|y|},
	\end{equation}
	where $\textup{dist}(y,A)$ denotes the distance between $y\in\mathbb{C}$ and a set $A\subset \mathbb{C}$.
	\iffalse
	\begin{figure}[htbp]
\centering
\begin{tikzpicture}[scale=2, >=stealth, line cap=round, line join=round]

\def\thetaZero{25}
\def\thetaOne{50}
\def\argY{65}

\fill[blue!8] 
    (0,0) -- (\thetaZero:1.8) 
    arc (\thetaZero:-\thetaZero:1.8) -- cycle;

\draw[blue!60, thick] 
    (0,0) -- (\thetaZero:1.8);
\draw[blue!60, thick] 
    (0,0) -- (-\thetaZero:1.8);

\node[blue!70!black] at (1.55,0.25) {$\Sigma_0(\theta_0)$};

\draw[->, gray!70] (-0.5,0) -- (2,0) node[right] {$\mathrm{Re}$};
\draw[->, gray!70] (0,-1) -- (0,1.2) node[above] {$\mathrm{Im}$};

\draw[dashed, gray] (0,0) -- (\thetaOne:1.7);
\node[gray] at (\thetaOne:1.85) {$\theta_1$};

\coordinate (O) at (0,0);
\coordinate (Y) at (\argY:1.5);

\fill (Y) circle (0.6pt) node[above right] {$y$};
\draw[thick] (O) -- (Y) node[midway, left] {$|y|$};

\path (O) -- (\thetaZero:2) coordinate (Ray);
\coordinate (Foot) at ($(O)!(Y)!(Ray)$);

\draw[red!70!black, thick] (Y) -- (Foot) node[midway, right] {$\textup{dist}(y,\Sigma_0(\theta_0))$};

\pic[draw, angle radius=5pt] {right angle = Y--Foot--O};

\fill (Foot) circle (0.5pt);

\draw[<->, blue!60] (\thetaZero:0.6) arc (\thetaZero:\argY:0.6);
\node[blue!60!black] at (35:0.8) {\small $\ge \theta_1 - \theta_0$};

\node at (1.85,-0.1) {};

\end{tikzpicture}
\caption{Geometric illustration of distance between $y\in\Sigma_\pi(\theta_1)$ and  the sector $\Sigma_0(\theta_0)$.}\label{Fig:ill}
\end{figure}
\fi
Consider the contour $\mathcal{Y}=\{y:y=r e^{\pm \mathbf{i}\theta_1},r\ge0\}$. From the Dunford--Taylor representation of fractional powers of sectorial operators, as well as \eqref{eq:analytic}, for any $\varrho >0$ we have
	\begin{align*}
		\|{\mathcal{A}_\lambda^{\varrho}} S_\lambda(t)\phi\|_{L^2_{\beta\gamma}(\Gamma)}&=\left\|\frac{1}{2\pi \mathbf{i}}\int_{\mathcal{Y}} y^\varrho  e^{-yt}(yI-\mathcal{A}_\lambda )^{-1}\phi\ud  y\right\|_{L^2_{\beta\gamma}(\Gamma)}\\\notag
		&\le C\|\phi\|_{L^2_{\beta\gamma}(\Gamma)}\int_0^\infty r^{\varrho -1} e^{-r t\cos\theta_1}\ud r\le C t^{-\varrho }\|\phi\|_{L^2_{\beta\gamma}(\Gamma)},
	\end{align*}
	where $C:=C(\varrho,\lambda)>0$ is independent of $\phi\in L^2_{\beta\gamma}(\Gamma)$. 
	This finishes the proof of \eqref{eq:smo1}. Finally, \eqref{eq:smo2} follows from \eqref{eq:smo1} and the identity 
	$(I-S_\lambda(t))\phi=\int_0^t\mathcal{A}_\lambda S_\lambda(s)\phi\ud s$.
\end{proof}

By introducing the shifted process $v_\lambda(t):=e^{-\lambda t}u(t)$, we have
\begin{equation}\label{eq:vlambda}
	\ud v_\lambda(t)=-\mathcal{A}_\lambda v_\lambda(t)\ud t
	+e^{-\lambda t}B(u(t))\ud t
	+e^{-\lambda t}G(u(t))\ud W(t),\quad t\in(0,T]
\end{equation}
with the initial value $v_\lambda(0)=u(0)=\chi^\wedge$. 

\begin{lemma}\label{lem:uHol1}
Suppose that Assumption \ref{asp:aT} holds and that
$\chi^\wedge\in L^2_{\beta\gamma}(\Gamma)$. Then, for every
$\varrho\in(0,\frac{1}{2})$ and $p\geq2$, there exists $C>0$ such that
\begin{equation}\label{eq:uHol-sing}
\|u(t)-u(s)\|_{L^p(\Omega,L^2_{\beta\gamma}(\Gamma))}
\leq
C(1+s^{-\varrho})(t-s)^\varrho,
\qquad 0<s\leq t\leq T.
\end{equation}
\end{lemma}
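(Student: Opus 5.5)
We work with the shifted process $v_\lambda(t)=e^{-\lambda t}u(t)$ from \eqref{eq:vlambda}, whose mild form is
\begin{equation*}
v_\lambda(t)=S_\lambda(t)\chi^\wedge+\int_0^t S_\lambda(t-r)e^{-\lambda r}B(u(r))\,\ud r+\int_0^t S_\lambda(t-r)e^{-\lambda r}G(u(r))\,\ud W(r).
\end{equation*}
Since $u(t)-u(s)=e^{\lambda t}(v_\lambda(t)-v_\lambda(s))+(e^{\lambda t}-e^{\lambda s})v_\lambda(s)$, and the second term is bounded by $C(t-s)$ times $\sup_r\|u(r)\|_{L^p(\Omega,L^2_{\beta\gamma}(\Gamma))}$, which is finite by \eqref{eq:L2}, it suffices to prove the estimate for $v_\lambda$. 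We use the analytic semigroup $S_\lambda$ rather than $S$ because Lemma~\ref{lem:smo} is stated for $\mathcal A_\lambda$.

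For $0<s\le t$ we split $v_\lambda(t)-v_\lambda(s)$ into five terms:
\begin{equation*}
I_1=(S_\lambda(t-s)-I)S_\lambda(s)\chi^\wedge,\qquad I_2=(S_\lambda(t-s)-I)\int_0^s S_\lambda(s-r)e^{-\lambda r}B(u(r))\,\ud r,
\end{equation*}
\begin{equation*}
I_3=\int_s^t S_\lambda(t-r)e^{-\lambda r}B(u(r))\,\ud r,
\end{equation*}
and the two analogous stochastic terms $I_4$ (the analogue of $I_2$) and $I_5$ (the analogue of $I_3$).

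The deterministic terms are handled as follows.
\begin{itemize}
\item For $I_1$, write $I_1=\mathcal A_\lambda^{-\varrho}(S_\lambda(t-s)-I)\,\mathcal A_\lambda^{\varrho}S_\lambda(s)\chi^\wedge$. By \eqref{eq:smo2} and \eqref{eq:smo1}, $\|I_1\|\le C(t-s)^\varrho s^{-\varrho}\|\chi^\wedge\|$. This is the source of the singular factor $s^{-\varrho}$.
\item For $I_2$, the same factorization gives the bound $(t-s)^\varrho\int_0^s (s-r)^{-\varrho}\|B(u(r))\|\,\ud r$, which is $\le C(t-s)^\varrho$ by \eqref{eq:Lin}, \eqref{eq:L2} and Minkowski's inequality, since $\varrho<1$.
\item For $I_3$, contractivity of $S_\lambda$ gives $\|I_3\|\le C(t-s)\le C(t-s)^\varrho$.
\end{itemize}

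The stochastic terms use the Burkholder–Davis–Gundy inequality in $L^p(\Omega)$, valid for $p\ge2$ with Hilbert–Schmidt norms in $\mathscr L_2(\mathcal U_0,L^2_{\beta\gamma}(\Gamma))$.
\begin{itemize}
\item For $I_5$,
\begin{equation*}
\|I_5\|_{L^p(\Omega)}\le C\Big(\int_s^t\|G(u(r))\|_{L^p(\Omega,\mathscr L_2)}^2\,\ud r\Big)^{1/2}\le C(t-s)^{1/2},
\end{equation*}
again using contractivity, \eqref{eq:Lin} and \eqref{eq:L2}.
\item For $I_4$, the operator $S_\lambda(t-s)-I$ is deterministic, so we may move it inside the stochastic integral. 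We then write $(S_\lambda(t-s)-I)S_\lambda(s-r)=\mathcal A_\lambda^{-\varrho}(S_\lambda(t-s)-I)\,\mathcal A_\lambda^{\varrho}S_\lambda(s-r)$ and use $\|AB\|_{\mathscr L_2}\le\|A\|_{\mathscr L}\|B\|_{\mathscr L_2}$. Together with the Burkholder–Davis–Gundy inequality this gives
\begin{equation*}
\|I_4\|_{L^p(\Omega)}\le C(t-s)^\varrho\Big(\int_0^s(s-r)^{-2\varrho}\big(1+\|u(r)\|^2_{L^p(\Omega)}\big)\,\ud r\Big)^{1/2}.
\end{equation*}
The integral is finite exactly because $2\varrho<1$, i.e.\ $\varrho<\tfrac12$.
\end{itemize}

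The main obstacle is $I_4$. The noise $W$ is not assumed to have any smoothness on $\Gamma$; we only know $G(u)\in\mathscr L_2(\mathcal U_0,L^2_{\beta\gamma}(\Gamma))$ from \eqref{eq:KGamma-diagonal}. All regularity must therefore come from the semigroup, and the pointwise bound \eqref{eq:smo1} only reaches $\varrho<1/2$ through square integrability of $(s-r)^{-2\varrho}$. That is precisely the range claimed in the statement, so the crude operator-norm approach suffices here; the sharper square-function estimate would only be needed for the endpoint $\varrho=1/2$. A minor point to check is that the factorization above is legitimate, i.e.\ that $\mathcal A_\lambda^{-\varrho}$ commutes with $S_\lambda$. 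This holds by the functional calculus for sectorial operators.

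Summing the bounds for $I_1,\dots,I_5$ and using $(t-s)\le T^{1-\varrho}(t-s)^\varrho$ and $(t-s)^{1/2}\le T^{1/2-\varrho}(t-s)^\varrho$ yields \eqref{eq:uHol-sing}.
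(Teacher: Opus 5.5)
Your proof is correct and follows essentially the same route as the paper. The paper writes $v_\lambda(t)-v_\lambda(s)=(S_\lambda(t-s)-I)v_\lambda(s)+\mathcal K(t,s)$, bounds $\mathcal K(t,s)$ by $C(t-s)^{1/2}$ exactly as you do for $I_3$ and $I_5$, and treats the first term with the same factorization $\mathcal A_\lambda^{-\varrho}(S_\lambda(t-s)-I)\,\mathcal A_\lambda^{\varrho}$; the only difference is that it first combines your bounds on $I_1$, $I_2$, $I_4$ into the single estimate $\|\mathcal A_\lambda^{\varrho}v_\lambda(s)\|_{L^p(\Omega,L^2_{\beta\gamma}(\Gamma))}\le C(1+s^{-\varrho})$ in \eqref{eq:Ala}, which it later reuses in Lemma~\ref{lem:Holder}.
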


\begin{proof}		
	 For any $0\le s\le t\le T$,
	\begin{gather}\label{eq:v-v}
		v_\lambda(t)-v_\lambda(s)
		=(S_\lambda(t-s)-I)v_\lambda(s)+\mathcal{K}(t,s),\\\notag
		\mathcal{K}(t,s):=\int_s^tS_\lambda(t-r)e^{-\lambda r}B(u(r))\ud r
		+\int_s^tS_\lambda(t-r)e^{-\lambda r}G(u(r))\ud W(r).
	\end{gather}
	Then from \eqref{eq:Stbound}, the linear growth of $B$ and $G$ in \eqref{eq:Lin}, and \eqref{eq:L2}, we deduce
	\begin{align}\label{eq:Kt}
		\E\left[\|\mathcal{K}(t,s)\|_{L^2_{\beta\gamma}(\Gamma)}^p\right]&\le C\E\left[\Big|\int_s^t\|S_\lambda(t-r)e^{-\lambda r}B(u(r))\|_{L^2_{\beta\gamma}(\Gamma)}\ud r\Big|^p\right]\\\notag
		&\quad+C\E\left[\Big|\int_s^t\|S_\lambda(t-r)e^{-\lambda r}G(u(r))\|_{\mathscr{L}_2(\mathcal{U}_0,L^2_{\beta\gamma}(\Gamma))}^2\ud r\Big|^{\frac{p}{2}}\right]\\\notag
		&\le C(t-s)^{\frac{p}{2}}.
	\end{align}
	The mild formulation of \eqref{eq:vlambda} reads
	$v_\lambda(t)=S_\lambda(t)\chi^\wedge
		+\mathcal{K}(t,0)$
	for any $t\in[0,T]$. Let $p\ge2$ and $\varrho\in(0,\frac{1}{2})$ throughout this proof. Then it follows from the definition of $\mathcal{K}(t,0)$ and the Burkholder inequality that
		\begin{align}\label{eq:Atheta}
		&\E\left[\|{\mathcal{A}_\lambda^{\varrho}} v_\lambda(t)\|_{L^2_{\beta\gamma}(\Gamma)}^p\right]
		\le C\|{\mathcal{A}_\lambda^{\varrho}} S_\lambda(t)\chi^\wedge\|_{L^2_{\beta\gamma}(\Gamma)}^p+\mathcal{J}(t),\\
		&\mathcal{J}(t):=C\E\bigg[\Big|\int_0^t\|{\mathcal{A}_\lambda^{\varrho}}S_\lambda(t-s) e^{-\lambda s}B(u(s))\|_{L^2_{\beta\gamma}(\Gamma)}\ud s\Big|^p\bigg]\\\notag
		&\qquad\qquad+C\E\left[\Big|\int_0^t\|{\mathcal{A}_\lambda^{\varrho}}S_\lambda(t-s)e^{-\lambda s}G(u(s))\|_{\mathscr{L}_2(\mathcal{U}_0,L^2_{\beta\gamma}(\Gamma))}^2\ud s\Big|^{\frac{p}{2}}\right].
	\end{align}
	Furthermore, according to Lemma \ref{lem:smo} and the linear growth of $B$ and $G$ in \eqref{eq:Lin}, we obtain that for any $t\in (0,T]$,
	\begin{align*}
		\mathcal{J}(t)&\le C\E\bigg[\Big|\int_0^t(t-s)^{-\varrho}(1+\|u(s)\|_{L^2_{\beta\gamma}(\Gamma)})\ud s\Big|^p\bigg]\\\notag
		&\quad+C\E\left[\Big|\int_0^t(t-s)^{-2\varrho}(1+\|u(s)\|_{L^2_{\beta\gamma}(\Gamma)}^2)\ud s\Big|^{\frac{p}{2}}\right]
		\le C(p,\varrho,\chi,T),
	\end{align*}
	where the last step is due to 
 \eqref{eq:L2}, $\varrho\in(0,\frac{1}{2})$, and the Minkowski inequality.
Hence,
		\begin{equation}\label{eq:Alarh}
		\E\left[\|{\mathcal{A}_\lambda^{\varrho}} v_\lambda(t)\|_{L^2_{\beta\gamma}(\Gamma)}^p\right]
		\le C\|{\mathcal{A}_\lambda^{\varrho}} S_\lambda(t)\chi^\wedge\|_{L^2_{\beta\gamma}(\Gamma)}^p+C.
		\end{equation}
Furthermore, using Lemma \ref{lem:smo} and the assumption $\chi^\wedge\in L^2_{\beta\gamma}(\Gamma)$ immediately yields			\begin{equation}\label{eq:Ala}
		\|{\mathcal{A}_\lambda^{\varrho}} v_\lambda(t)\|_{L^p(\Omega,L^2_{\beta\gamma}(\Gamma))}
		\le C(1+t^{-\varrho}),\quad 0<t\le T.
		\end{equation}
		Recall that by \eqref{eq:v-v} and \eqref{eq:Kt}, for any $0\le s\le t\le T$,
			\begin{equation}\label{eq:vtvs}
		\|v_\lambda(t)-v_\lambda(s)\|_{L^p(\Omega,L^2_{\beta\gamma}(\Gamma))}\le \|(S_\lambda(t-s)-I)v_\lambda(s)\|_{L^p(\Omega,L^2_{\beta\gamma}(\Gamma))}+C(t-s)^{\frac{1}{2}},
	\end{equation}
	which together with Lemma \ref{lem:smo}, \eqref{eq:Ala}, and $u(t)=e^{\lambda t}v_\lambda(t)$ completes the proof.	
\end{proof}

In the following lemma, we present the fractional-domain regularity of the mild solution to \eqref{eq:intro}. The
domain of the form $a_\lambda$ associated with $\mathcal{A}_\lambda$ is $W^{1,2}_{\mathrm{cont}}(\Gamma)$, but the equality
$D(\mathcal A_\lambda^{1/2})=W^{1,2}_{\mathrm{cont}}(\Gamma)$ does not follow from the present nonsymmetric form
setting. We therefore state the needed condition directly as
$\chi^\wedge\in D(\mathcal A_\lambda^{1/2})$. The readily checked stronger
condition $\chi^\wedge\in D(\mathcal L)$ implies it because
$D(\mathcal L)=D(\mathcal A_\lambda)\subset
D(\mathcal A_\lambda^{1/2})$, with $D(\mathcal L)$ given in \eqref{eq:DL}.
\begin{lemma}\label{lem:Holder}
	Let Assumption \ref{asp:aT} hold and the initial value $\chi^\wedge\in D({\mathcal{A}_\lambda^{1/2}})$. Then for any  $p\ge2$, there exists $C>0$ such that for all $0\le t\le T$,
	\begin{equation}\label{eq:Holder}
		\E\left[\|{\mathcal{A}_\lambda^{1/2}}u(t)\|_{L^2_{\beta\gamma}(\Gamma)}^p\right]\le C.
	\end{equation}
\end{lemma}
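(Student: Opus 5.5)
We work with the shifted process $v_\lambda(t)=e^{-\lambda t}u(t)$ from \eqref{eq:vlambda}. Since $\|\mathcal A_\lambda^{1/2}u(t)\|=e^{\lambda t}\|\mathcal A_\lambda^{1/2}v_\lambda(t)\|\le e^{\lambda T}\|\mathcal A_\lambda^{1/2}v_\lambda(t)\|$, it suffices to bound $\E\|\mathcal A_\lambda^{1/2}v_\lambda(t)\|^p$ uniformly in $t$. The mild form $v_\lambda(t)=S_\lambda(t)\chi^\wedge+\mathcal K(t,0)$ splits the problem into three terms: the initial term, the drift convolution and the stochastic convolution.

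\textbf{Initial and drift terms.} For the initial term, $\mathcal A_\lambda^{1/2}$ commutes with $S_\lambda(t)$ on $D(\mathcal A_\lambda^{1/2})$. Together with the contractivity of $S_\lambda$ (Lemma~\ref{thm:3.3}) this gives $\|\mathcal A_\lambda^{1/2}S_\lambda(t)\chi^\wedge\|\le\|\mathcal A_\lambda^{1/2}\chi^\wedge\|$. For the drift term, \eqref{eq:smo1} with $\varrho=1/2$ gives the bound $\int_0^t (t-s)^{-1/2}(1+\|u(s)\|)\,\ud s$. By Minkowski's inequality and \eqref{eq:L2}, its $L^p(\Omega)$ norm is bounded uniformly in $t$.

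\textbf{Stochastic convolution (main obstacle).} Here \eqref{eq:smo1} only yields $(t-s)^{-1}$ inside the Burkholder integral, which is not integrable. This is exactly where the square-function estimate announced in the introduction is needed:
\begin{equation*}
\int_0^\infty\|\mathcal A_\lambda^{1/2}S_\lambda(r)\phi\|^2_{L^2_{\beta\gamma}(\Gamma)}\,\ud r\le C\|\phi\|^2_{L^2_{\beta\gamma}(\Gamma)}.
\end{equation*}
I would prove it from the form coercivity \eqref{eq:lower} by an energy identity. For $w(r)=S_\lambda(r)\phi$ one has
\begin{equation*}
\tfrac{\ud}{\ud r}\|w\|^2=-2\Re a_\lambda(w,w)\le -2c(\lambda)\|w\|^2_{W^{1,2}_{\alpha\gamma,\beta\gamma}(\Gamma)},
\end{equation*}
so $\int_0^\infty\|w\|^2_{W^{1,2}}\,\ud r\le C\|\phi\|^2$. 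It then remains to compare $\|\mathcal A_\lambda^{1/2}w\|$ with the form norm. Because the form is nonsymmetric, $D(\mathcal A^{1/2}_\lambda)=W^{1,2}_{\mathrm{cont}}$ is not available, as the text warns. I would therefore avoid it in one of two ways.
\begin{itemize}
\item Use the McIntosh/Kato square-function theory: $\mathcal A_\lambda$ is m-sectorial and associated with a closed form, so it has a bounded $H^\infty$ calculus on the Hilbert space. Quadratic estimates $\int_0^\infty\|(r\mathcal A_\lambda)^{1/2}e^{-r\mathcal A_\lambda}\phi\|^2\,\ud r/r\le C\|\phi\|^2$ then follow.
\item Alternatively, argue directly. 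Write $\|\mathcal A_\lambda^{1/2}w\|^2\lesssim|\langle\mathcal A_\lambda w,w\rangle|$ plus a correction controlled via \eqref{eq:aconti}, using the known inequality $\|A^{1/2}x\|^2\le C\,\Re\langle Ax,x\rangle$ for operators with a bounded imaginary part of the form. That inequality is Kato's result for accretive forms in the regularly accretive case and can be cited.
\end{itemize}

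\textbf{Conclusion.} With the square-function estimate in hand, Burkholder's inequality gives
\begin{equation*}
\E\Big|\int_0^t\|\mathcal A_\lambda^{1/2}S_\lambda(t-s)e^{-\lambda s}G(u(s))\|^2_{\mathscr L_2}\,\ud s\Big|^{p/2}.
\end{equation*}
Expanding the Hilbert--Schmidt norm over the basis $\{e_\ell\}$ of $\mathcal U_0$ and applying the estimate with $\phi=G(u(s))e_\ell$ does not directly work, because $\phi$ depends on $s$. So I would first freeze the integrand. Write $G(u(s))=G(u(t))+[G(u(s))-G(u(t))]$.
\begin{itemize}
\item For the frozen part, apply the square-function estimate with $\phi=G(u(t))e_\ell$ and sum over $\ell$ using \eqref{eq:Lin}. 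This part needs care with adaptedness, so I would instead freeze at fixed deterministic points, or equivalently use the factorization technique.
\item For the difference, use \eqref{eq:smo1}, \eqref{eq:Lip} and Lemma~\ref{lem:uHol1} with some $\varrho'\in(0,1/2)$. This gives $\int_0^t(t-s)^{-1}(t-s)^{2\varrho'}(1+s^{-2\varrho'})\,\ud s<\infty$.
\end{itemize}
Collecting the three terms yields \eqref{eq:Holder}.
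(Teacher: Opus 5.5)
Your architecture is the paper's: shift to $v_\lambda$, use contractivity for the initial term and the kernel $(t-s)^{-1/2}$ for the drift, and for the stochastic convolution apply the Burkholder inequality, split $G(u(s))=G(u(t))+[G(u(s))-G(u(t))]$, and use a square-function estimate. The genuine gap is that at the decisive step you abandon the correct argument.

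The frozen term has no adaptedness issue. You have already applied the Burkholder inequality, so the split is only the triangle inequality inside the pathwise Lebesgue integral $\int_0^t\|\mathcal A_\lambda^{1/2}S_\lambda(t-s)e^{-\lambda s}G(u(s))\|^2_{\mathscr L_2(\mathcal U_0,L^2_{\beta\gamma}(\Gamma))}\,\ud s$. No stochastic integral of the nonadapted integrand $G(u(t))$ is ever formed. For each fixed $\omega$, apply the square-function estimate to $\phi=G(u(t))e_\ell$ and sum over $\ell$. This gives the pathwise bound $C\|G(u(t))\|^2_{\mathscr L_2(\mathcal U_0,L^2_{\beta\gamma}(\Gamma))}$, whose $p/2$-moment is controlled by \eqref{eq:Lin} and \eqref{eq:L2}. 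This is exactly the paper's term $\mathcal I_2$, and it is why the paper insists on Burkholder first, split afterwards.

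The substitutes you propose fail. Any adapted freezing at a point $t_0\le s$ (deterministic grid points, or $u(0)=\chi^\wedge$) leaves a difference $\|u(s)-u(t_0)\|$ that does not vanish as $s\to t$. Hence $\int_0^t(t-s)^{-1}\|u(s)-u(t_0)\|^2\,\ud s$ diverges. Freezing at the endpoint $t$ is precisely what makes the difference decay like $(t-s)^{2\varrho'}$ where the kernel $(t-s)^{-1}$ is singular. The factorization method cannot reach the endpoint either. Write $\mathcal A_\lambda^{1/2}=\mathcal A_\lambda^{1/2-\delta}\mathcal A_\lambda^{\delta}$ with factorization exponent $\eta$. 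The inner convolution needs $\eta+\delta<\frac12$, while the outer one needs $\eta+\delta>\frac12$. So it only reproduces the bounds on $\mathcal A_\lambda^{\varrho}v_\lambda(t)$ with $\varrho<\frac12$ from the proof of Lemma~\ref{lem:uHol1}.

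\emph{Route (i) to the square-function estimate.} The paper simply imports this estimate as \eqref{eq:smooth}. Your route (i) is sound and makes explicit what is really used: $\mathcal A_\lambda$ is sectorial of angle $\theta_0<\pi/2$ (proof of Lemma~\ref{lem:smo}), so $z^{1/2}e^{-z}$ is admissible in McIntosh's theorem. Contractivity alone would not suffice, as a unitary group shows.

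\emph{Route (ii), as justified, is wrong.} The inequality $\|A^{1/2}x\|^2\le C\,\Re\langle Ax,x\rangle$ is not a theorem about regularly accretive operators. It gives $D(a)\subset D(A^{1/2})$ for the form domain $D(a)$. Applied to both $A$ and $A^*$, it would force $D(A^{1/2})=D(a)=D(A^{*1/2})$, which McIntosh's counterexample excludes; Kato's theorem only yields $D(a)\subset D(A^{\beta})$ for $\beta<\frac12$. Here the inequality can be rescued from the structure of $a_\lambda$. Its nonsymmetric part $\frac12\sum_k\int_{I_k}\alpha_k\phi'\,\overline{\psi}\,\gamma_k'\,\ud z$ is bounded by $C\|\phi\|_{W^{1,2}_{\alpha\gamma,\beta\gamma}(\Gamma)}\|\psi\|_{L^2_{\beta\gamma}(\Gamma)}$. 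Hence $D(\mathcal A_\lambda)$ equals the domain of the self-adjoint operator of the symmetric part. Interpolation (bounded imaginary powers) then gives $D(\mathcal A_\lambda^{1/2})=W^{1,2}_{\mathrm{cont}}(\Gamma)$, after which your energy identity yields the estimate directly. But that is an argument you must supply, not a citation.

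\emph{Difference term.} Using the singular bound of Lemma~\ref{lem:uHol1} here is fine, because $\int_0^t(t-s)^{2\varrho'-1}s^{-2\varrho'}\,\ud s=\int_0^1(1-r)^{2\varrho'-1}r^{-2\varrho'}\,\ud r$ does not depend on $t$. The paper instead first upgrades to the nonsingular bound \eqref{eq:uHol} via $\chi^\wedge\in D(\mathcal A_\lambda^{1/2})$ and takes $\varrho=\frac14$.
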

\begin{proof}
Let $p\ge2$.
	First, invoking \eqref{eq:Stbound}, \eqref{eq:Alarh} and $\chi^\wedge\in D({\mathcal{A}_\lambda^{1/2}})\subset D({\mathcal{A}_\lambda^{\varrho}})$ for $\varrho\in(0,\frac{1}{2})$, we have
			\begin{equation*}
		\E\left[\|{\mathcal{A}_\lambda^{\varrho}} v_\lambda(t)\|_{L^2_{\beta\gamma}(\Gamma)}^p\right]
		\le C\|{\mathcal{A}_\lambda^{\varrho}}\chi^\wedge\|_{L^2_{\beta\gamma}(\Gamma)}^p+C\le C.
		\end{equation*}
Together with \eqref{eq:vtvs} and Lemma \ref{lem:smo}, this implies that for any $\varrho\in(0,\frac{1}{2})$,
			\begin{equation}\label{eq:vHol}
		\|v_\lambda(t)-v_\lambda(s)\|_{L^p(\Omega,L^2_{\beta\gamma}(\Gamma))}\le C_{\varrho}(t-s)^{\varrho}+C(t-s)^{\frac{1}{2}}\le C_{\varrho}(t-s)^{\varrho}.
	\end{equation}	
	Combining \eqref{eq:vHol} and $u(t)=e^{\lambda t}v_\lambda(t)$, we obtain that for any $\varrho\in(0,\frac{1}{2})$ and $p\ge2$,
	\begin{equation}\label{eq:uHol}
		\|	u(t)-u(s)\|_{L^p(\Omega,L^2_{\beta\gamma}(\Gamma))}\le C(t-s)^{\varrho}.
	\end{equation}
By Lemma~\ref{thm:3.3}, $-\mathcal A_\lambda$ generates an analytic, and hence strongly continuous, contraction semigroup
$
\{S_\lambda(t)=e^{-t\mathcal A_\lambda}\}
$
on the Hilbert space $L^2_{\beta\gamma}(\Gamma)$. Thus the hypotheses of \cite[Proposition~A.19 and Remark~A.20]{DZ14} are satisfied. Proposition~A.21 of \cite{DZ14} then yields
\begin{equation}\label{eq:smooth}
\int_0^T
\bigl\|\mathcal A_\lambda^{1/2}S_\lambda(s)\phi\bigr\|_{L^2_{\beta\gamma}(\Gamma)}^2\,\ud s
\le C\|\phi\|_{L^2_{\beta\gamma}(\Gamma)}^2
\qquad
\forall~\phi\in L^2_{\beta\gamma}(\Gamma).
\end{equation}
This estimate is a consequence of the contraction-semigroup property and does not require $\mathcal A_\lambda$ to be self-adjoint.

	Now we are in a position to prove \eqref{eq:Holder}, whose proof is inspired by \cite{KR14}.
Similar to \eqref{eq:Atheta}, applying the Burkholder inequality to the stochastic convolution and only then using the decomposition
$
G(u(s))=G(u(t))+(G(u(s))-G(u(t)))
$
inside the Hilbert--Schmidt norm, we obtain
	\begin{align}\label{eq:theta=1/2}%
		&\E\left[\|{\mathcal{A}_\lambda^{1/2}} v_\lambda(t)\|_{L^2_{\beta\gamma}(\Gamma)}^p\right]\le C+C\mathcal{I}_1+C\mathcal{I}_2+C\mathcal{I}_3,\\\notag
		\mathcal{I}_1&:=\E\bigg[\Big|\int_0^t\|{\mathcal{A}_\lambda^{1/2}}S_\lambda(t-s) e^{-\lambda s}B(u(s))\|_{L^2_{\beta\gamma}(\Gamma)}\ud s\Big|^p\bigg]\\\notag
		\mathcal{I}_2&:=\E\left[\Big|\int_0^t\|{\mathcal{A}_\lambda^{1/2}}S_\lambda(t-s)e^{-\lambda s}G(u(t))\|_{\mathscr{L}_2(\mathcal{U}_0,L^2_{\beta\gamma}(\Gamma))}^2\ud s\Big|^{\frac{p}{2}}\right]\\\notag
		\mathcal{I}_3&:=\E\left[\Big|\int_0^t\|{\mathcal{A}_\lambda^{1/2}}S_\lambda(t-s)e^{-\lambda s}(G(u(t))-G(u(s)))\|_{\mathscr{L}_2(\mathcal{U}_0,L^2_{\beta\gamma}(\Gamma))}^2\ud s\Big|^{\frac{p}{2}}\right].
		\end{align}
	By Lemma \ref{lem:smo}, the linear growth of $B$, and \eqref{eq:L2}, we arrive at
	\begin{equation*}
	\begin{aligned}
		\mathcal{I}_1&\le C\E\bigg[\Big|\int_0^t(t-s)^{-\frac{1}{2}}(1+\|u(s)\|_{L^2_{\beta\gamma}(\Gamma)})\ud s\Big|^p\bigg]\\
		&\le C\bigg|\int_0^t(t-s)^{-\frac{1}{2}}(1+\|u(s)\|_{L^p(\Omega,L^2_{\beta\gamma}(\Gamma))})\ud s\bigg|^p\le C.
	\end{aligned}
	\end{equation*}
Applying \eqref{eq:smooth} to $G(u(t))e_\ell$ for an orthonormal basis $\{e_\ell\}$ of $\mathcal U_0$, and then summing by Tonelli's theorem, the linear growth of $G$, and \eqref{eq:L2},
	\begin{equation*}
		\mathcal{I}_2
		\le C\E\left[\|G(u(t))\|_{\mathscr{L}_2(\mathcal{U}_0,L^2_{\beta\gamma}(\Gamma))}^p\right]\le C\left(1+\|u(t)\|_{L^p(\Omega,L^2_{\beta\gamma}(\Gamma))}^p\right)\le C.
	\end{equation*}
	By Lemma \ref{lem:smo}, the Lipschitz continuity of $G$ in \eqref{eq:Lip}, the
Minkowski inequality, and \eqref{eq:uHol} with $\varrho=\frac{1}{4}$,
	\begin{align*}
		\mathcal{I}_3&\le	C\E\left[\Big|\int_0^t(t-s)^{-1}\|G(u(s))-G(u(t))\|_{\mathscr{L}_2(\mathcal{U}_0,L^2_{\beta\gamma}(\Gamma))}^2\ud s\Big|^{\frac{p}{2}}\right]\\
		&\le C\Big|\int_0^t(t-s)^{-\frac{1}{2}}\ud s\Big|^{\frac{p}{2}}\le C.
	\end{align*}
	Substituting the estimates of $\mathcal{I}_1$, $\mathcal{I}_2$, $\mathcal{I}_3$ into \eqref{eq:theta=1/2}, we finish the proof of \eqref{eq:Holder}.
\end{proof}

The half-order fractional-domain estimate established in Lemma~\ref{lem:Holder} yields the following optimal temporal H\"older regularity, which is essential for the strong convergence analysis in Section~\ref{S4}.

\begin{theorem}\label{lem:opt-Hol}
	Let Assumption \ref{asp:aT} and $\chi^\wedge\in D({\mathcal{A}_\lambda^{1/2}})$ hold. Then for any $p\ge2$, there exists $C>0$ such that for any $0\le s\le t\le T$,
	\begin{equation}\label{eq:opt-Hol}
		\|u(t)-u(s)\|_{L^p(\Omega,L^2_{\beta\gamma}(\Gamma))}\le C(t-s)^{\frac{1}{2}}.
	\end{equation}
\end{theorem}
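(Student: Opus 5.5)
We will work with the shifted process $v_\lambda(t)=e^{-\lambda t}u(t)$ from \eqref{eq:vlambda} and the decomposition \eqref{eq:v-v}:
\[
v_\lambda(t)-v_\lambda(s)=(S_\lambda(t-s)-I)v_\lambda(s)+\mathcal K(t,s),\qquad 0\le s\le t\le T.
\]
The stochastic and deterministic convolution part is already controlled by \eqref{eq:Kt}, namely $\|\mathcal K(t,s)\|_{L^p(\Omega,L^2_{\beta\gamma}(\Gamma))}\le C(t-s)^{1/2}$. This bound needs only the uniform bound \eqref{eq:Stbound}, the linear growth \eqref{eq:Lin}, and the moment bound \eqref{eq:L2}, so nothing new is required there.

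For the first term, we write
\[
(S_\lambda(t-s)-I)v_\lambda(s)=\mathcal A_\lambda^{-1/2}(S_\lambda(t-s)-I)\,\mathcal A_\lambda^{1/2}v_\lambda(s).
\]
This uses the commutation of $S_\lambda$ with fractional powers of $\mathcal A_\lambda$ on $D(\mathcal A_\lambda^{1/2})$, which is legitimate since $v_\lambda(s)\in D(\mathcal A_\lambda^{1/2})$ almost surely by Lemma~\ref{lem:Holder}. Applying \eqref{eq:smo2} with $\varrho=\tfrac12\in(0,1)$ gives
\[
\|(S_\lambda(t-s)-I)v_\lambda(s)\|_{L^p(\Omega,L^2_{\beta\gamma}(\Gamma))}\le C(t-s)^{1/2}\,\|\mathcal A_\lambda^{1/2}v_\lambda(s)\|_{L^p(\Omega,L^2_{\beta\gamma}(\Gamma))}.
\]
Since $v_\lambda(s)=e^{-\lambda s}u(s)$ and $e^{-\lambda s}\le1$, Lemma~\ref{lem:Holder} bounds the last factor uniformly in $s\in[0,T]$. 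At $s=0$ the bound follows directly from the hypothesis $\chi^\wedge\in D(\mathcal A_\lambda^{1/2})$. Combining the two terms, we obtain $\|v_\lambda(t)-v_\lambda(s)\|_{L^p(\Omega,L^2_{\beta\gamma}(\Gamma))}\le C(t-s)^{1/2}$.

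To return to $u$, we write
\[
u(t)-u(s)=e^{\lambda t}\bigl(v_\lambda(t)-v_\lambda(s)\bigr)+\bigl(e^{\lambda t}-e^{\lambda s}\bigr)v_\lambda(s).
\]
The first piece is bounded by $e^{\lambda T}C(t-s)^{1/2}$. For the second, $|e^{\lambda t}-e^{\lambda s}|\le \lambda e^{\lambda T}(t-s)\le C(t-s)^{1/2}$ on $[0,T]$, and $\|v_\lambda(s)\|_{L^p(\Omega,L^2_{\beta\gamma}(\Gamma))}$ is bounded by \eqref{eq:L2}. This yields \eqref{eq:opt-Hol}.

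The only delicate point is the one already handled in Lemma~\ref{lem:Holder}: obtaining a \emph{uniform in time} bound on $\mathcal A_\lambda^{1/2}u(t)$ without an $H^1$-valued noise, which relied on the square-function estimate \eqref{eq:smooth}. Granting that lemma, the present theorem is a short consequence, and the remaining care is just in justifying the factorization with $\mathcal A_\lambda^{\pm1/2}$ via the sectorial functional calculus.
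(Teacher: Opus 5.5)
Your proposal is correct and follows essentially the same route as the paper: the decomposition \eqref{eq:v-v}, the bound \eqref{eq:Kt} for $\mathcal K(t,s)$, the factorization $\mathcal A_\lambda^{-1/2}(S_\lambda(t-s)-I)\mathcal A_\lambda^{1/2}v_\lambda(s)$ estimated via \eqref{eq:smo2} with $\varrho=\tfrac12$ and Lemma~\ref{lem:Holder}, and the return to $u$ through $u(t)=e^{\lambda t}v_\lambda(t)$ together with \eqref{eq:L2}. Your extra remarks on the functional-calculus commutation and on $e^{-\lambda s}\le 1$ only make explicit steps the paper leaves implicit.
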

\begin{proof}
	Due to \eqref{eq:v-v} and \eqref{eq:Kt}, we obtain that for any $0\le s\le t\le T$ and $p\ge 2$,
	\begin{equation*}
		\E\big[\|v_\lambda(t)-v_\lambda(s)\|_{L^2_{\beta\gamma}(\Gamma)}^p\big]\le C\E\big[\|{\mathcal{A}_\lambda^{-1/2}}(S_\lambda(t-s)-I){\mathcal{A}_\lambda^{1/2}}v_\lambda(s)\|_{L^2_{\beta\gamma}(\Gamma)}^p\big]+C(t-s)^{\frac{p}{2}}.
	\end{equation*}
	Furthermore, applying Lemma  \ref{lem:Holder}, as well as  Lemma \ref{lem:smo} leads to
	\begin{equation*}
		\E\left[\|v_\lambda(t)-v_\lambda(s)\|_{L^2_{\beta\gamma}(\Gamma)}^p\right]\le C(t-s)^{\frac{p}{2}}.
	\end{equation*}
	Recalling that $u(t)=e^{\lambda t}v_\lambda(t)$, we use
	\begin{align*}
	 \|u(t)-u(s)\|_{L^p(\Omega,L^2_{\beta\gamma}(\Gamma))}
	 &\le e^{\lambda t}\|v_\lambda(t)-v_\lambda(s)\|_{L^p(\Omega,L^2_{\beta\gamma}(\Gamma))}\\
	 &\quad+|e^{\lambda t}-e^{\lambda s}|\,\|v_\lambda(s)\|_{L^p(\Omega,L^2_{\beta\gamma}(\Gamma))},
	\end{align*}
	together with \eqref{eq:L2}, and the proof of the temporal H\"older regularity estimate \eqref{eq:opt-Hol} is complete.
\end{proof}

The exponent $1/2$ in \eqref{eq:opt-Hol} is optimal in general.
Indeed, consider the special case $b=0$, $g\equiv1$, and
$\chi^\wedge=0$, and assume that the Nemytskii operator $G$ in
\eqref{eq:BG} associated with $g\equiv1$ is nonzero, i.e.,
$\|G\|_{\mathscr{L}_2(\mathcal U_0,L^2_{\beta\gamma}(\Gamma))}>0$.
The mild solution of \eqref{eq:intro} is then the Ornstein--Uhlenbeck process
$u(t)=\int_0^t S(t-r)G\,\ud W(r)$. By the It\^o isometry,
\begin{equation*}
\mathbb{E}\|u(t)-u(0)\|_{L^2_{\beta\gamma}(\Gamma)}^2
=\int_0^t
\|S(r)G\|_{\mathscr{L}_2(\mathcal U_0,L^2_{\beta\gamma}(\Gamma))}^2
\,\ud r\sim t\|G\|_{\mathscr{L}_2(\mathcal U_0,L^2_{\beta\gamma}(\Gamma))}^2,
\qquad t\downarrow0.
\end{equation*}
Here the asymptotic relation follows from the strong continuity of $S$
and the fact that $G$ is Hilbert--Schmidt. Hence, in general, no
temporal H\"older exponent larger than $1/2$ can hold.

\section{Strong convergence and asymptotic preservation}\label{S4}

As shown in section~\ref{S:discretelimit}, the remaining step in
establishing the AP property \eqref{eq:AP-intro} is to prove the strong
convergence of the limiting scheme \eqref{eq:Un-intro}. We first
state and prove this convergence result.
	\begin{theorem}\label{thm:main2}
	Suppose that Assumption  \ref{asp:aT} holds, and that $b$ and $g$ are globally Lipschitz continuous. Let
$\{U^n\}_{n=0}^N$ be the exponential Euler approximation
\eqref{eq:Un-intro} with step size $\tau=T/N$. Then the following
statements hold.

(a) If
$\chi^\wedge\in L^2_{\beta\gamma}(\Gamma)$, then, for every
$\varrho\in(0,\frac{1}{2})$ and $p\geq2$, there exists a constant $C_\varrho:=C(p,T,\chi^\wedge,b,g,\mu,\gamma,\varrho)>0$ such that for any $n=1,\ldots,N$,
\begin{equation}\label{eq:main}
\mathbb E\left[
\|u(t_n)-U^n\|_{L^2_{\beta\gamma}(\Gamma)}^p
\right]
\leq C_\varrho\tau^{p\varrho}.
\end{equation}

(b) If
$\chi^\wedge\in D({\mathcal A_\lambda^{1/2}})$ for some $\lambda>\kappa/8$, then
\eqref{eq:main} remains valid for $\varrho=\frac{1}{2}$.
\end{theorem}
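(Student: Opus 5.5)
I would compare the mild solution at the grid points with the discrete variation-of-constants form of the scheme. Write $S(t)=e^{t\mathcal L}=e^{\lambda t}S_\lambda(t)$ and $\lfloor s\rfloor:=t_{j}$ for $s\in[t_j,t_{j+1})$. Iterating \eqref{eq:Un-intro} gives
\begin{equation*}
U^n=S(t_n)\chi^\wedge+\int_0^{t_n}S(t_n-\lfloor s\rfloor)B(U^{\lfloor s\rfloor/\tau})\,\ud s+\int_0^{t_n}S(t_n-\lfloor s\rfloor)G(U^{\lfloor s\rfloor/\tau})\,\ud W(s).
\end{equation*}
The mild solution satisfies $u(t_n)=S(t_n)\chi^\wedge+\int_0^{t_n}S(t_n-s)B(u(s))\ud s+\int_0^{t_n}S(t_n-s)G(u(s))\ud W(s)$. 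The initial terms cancel exactly. I would write $e^n:=u(t_n)-U^n$ and split each convolution into three pieces:
\begin{itemize}
\item a semigroup-mismatch term with $S(t_n-s)-S(t_n-\lfloor s\rfloor)$ applied to $B(u(s))$ or $G(u(s))$;
\item a time-freezing term with $S(t_n-\lfloor s\rfloor)$ applied to $B(u(s))-B(u(\lfloor s\rfloor))$ (similarly for $G$);
\item a propagated-error term with $S(t_n-\lfloor s\rfloor)$ applied to $B(u(\lfloor s\rfloor))-B(U^{\lfloor s\rfloor/\tau})$.
\end{itemize}

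\textbf{Propagated error.} By \eqref{eq:Stbound}, \eqref{eq:Lip}, the Burkholder inequality and H\"older, this piece is bounded in $L^p(\Omega)$ by $C\tau\sum_{j<n}\E\|e^j\|^p$. A discrete Gronwall inequality closes the argument once the other pieces are shown to be $O(\tau^{p\varrho})$.

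\textbf{Freezing terms.} For part (b), Theorem \ref{lem:opt-Hol} gives $\|u(s)-u(\lfloor s\rfloor)\|_{L^p}\le C\tau^{1/2}$, which bounds both the drift and diffusion pieces by $C\tau^{1/2}$. For part (a), Lemma \ref{lem:uHol1} gives $C(1+\lfloor s\rfloor^{-\varrho})\tau^{\varrho}$. The singularity $\lfloor s\rfloor^{-\varrho}$ (squared, $\lfloor s\rfloor^{-2\varrho}$ with $2\varrho<1$) is integrable on $[t_1,t_n]$. On the first interval $[0,t_1)$, where $\lfloor s\rfloor=0$, I would instead use linear growth and \eqref{eq:L2}, which give $O(\tau^{1/2})$ for the stochastic part and $O(\tau)$ for the drift.

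\textbf{Semigroup mismatch, drift part.} Write
\begin{equation*}
S(t_n-s)-S(t_n-\lfloor s\rfloor)=e^{\lambda(\cdot)}\mathcal A_\lambda^{\varrho}S_\lambda(t_n-s)\,\mathcal A_\lambda^{-\varrho}\bigl(I-S_\lambda(s-\lfloor s\rfloor)\bigr)
\end{equation*}
plus a harmless $O(\tau)$ term coming from the exponential factors. Lemma \ref{lem:smo} then gives the bound $(t_n-s)^{-\varrho}\tau^{\varrho}$, which is integrable. Using $\varrho$ close to $1$ even gives order nearly $1$ here.

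\textbf{Semigroup mismatch, stochastic part (main obstacle).} This piece is $\int_0^{t_n}\|(S(t_n-s)-S(t_n-\lfloor s\rfloor))G(u(s))\|_{\mathscr L_2}^2\,\ud s$. With fractional smoothing exponent $\varrho$, the bound $(t_n-s)^{-2\varrho}\tau^{2\varrho}$ is integrable only for $\varrho<1/2$. That already proves (a) but loses a logarithm at $\varrho=1/2$.

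For (b) I would first freeze the integrand at $G(u(t_n))$, following the device used in Lemma \ref{lem:Holder}. Write $G(u(s))=G(u(t_n))+(G(u(s))-G(u(t_n)))$.
\begin{itemize}
\item For the frozen piece, factor $\mathcal A_\lambda^{-1/2}(I-S_\lambda(s-\lfloor s\rfloor))\mathcal A_\lambda^{1/2}S_\lambda(t_n-s)$. This requires commuting $S_\lambda(s-\lfloor s\rfloor)$ to the left and noting that $\mathcal A_\lambda^{-1/2}(I-S_\lambda(\cdot))$ has norm at most $C\tau^{1/2}$ by \eqref{eq:smo2}. The square-function estimate \eqref{eq:smooth}, applied to $G(u(t_n))e_\ell$ and summed over $\ell$, then gives $C\tau\,\E\|G(u(t_n))\|^p_{\mathscr L_2}$.
\item For the remainder piece, use the bound $(t_n-s)^{-1}\tau$ from Lemma \ref{lem:smo} with exponent $1/2$ (squared), together with the Lipschitz bound and Theorem \ref{lem:opt-Hol}, which gives $\|u(s)-u(t_n)\|^2\le C(t_n-s)$. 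The integrand is then $O(\tau)$ and integrable.
\end{itemize}
Together these give order $1/2$. The technical care lies in handling the Burkholder step before splitting inside the Hilbert--Schmidt norm, exactly as in \eqref{eq:theta=1/2}.

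Collecting all pieces and applying Gronwall yields \eqref{eq:main}.
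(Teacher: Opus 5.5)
Your proposal is correct and follows essentially the paper's proof. It uses the same split into semigroup-mismatch, solution-freezing and propagated-error terms, closed by a discrete Gr\"onwall argument. The critical stochastic mismatch at $\varrho=\frac12$ is handled, as in the paper, by applying Burkholder first, then freezing at $G(u(t_n))$ and invoking \eqref{eq:smooth} together with the $\frac12$-H\"older bound of Theorem~\ref{lem:opt-Hol}.

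The differences are cosmetic. You keep $e^{t\mathcal L}$ instead of shifting to $v_\lambda$. For part (a) you bound the stochastic mismatch directly by $(t_n-s)^{-2\varrho}\tau^{2\varrho}$ rather than freezing, which is slightly simpler. One small slip: after raising to the power $p/2$, the frozen piece contributes $C\tau^{p/2}\,\E\|G(u(t_n))\|^p$, not $C\tau\,\E\|G(u(t_n))\|^p$.
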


\begin{proof}
We give a unified proof of both assertions. Let $\varrho_1=\varrho$ if $\varrho\in (0,\frac{1}{2})$; and $\varrho_1=0$ if $\varrho=\frac{1}{2}$.
 Recall that by Lemma \ref{lem:uHol1} and Theorem \ref{lem:opt-Hol},
\begin{equation}\label{eq:unified-Hol}
\|u(t)-u(s)\|_{L^p(\Omega,L^2_{\beta\gamma}(\Gamma))}
\leq C(p,\varrho,\chi^\wedge,T) s^{-\varrho_1}|t-s|^{\varrho},
\qquad 0< s\leq t\leq T,
\end{equation}
where $\varrho\in (0,\frac{1}{2})$ if $\chi^\wedge\in L^2_{\beta\gamma}(\Gamma)$, and $\varrho=\frac{1}{2}$ if $\chi^\wedge\in D({\mathcal A_\lambda^{1/2}})$.
For each $n=0,1,\ldots,N$, by denoting $V_\lambda^n=e^{-\lambda t_n}U^n$ and using \eqref{eq:Un-intro},
\begin{equation*}%
	V_\lambda^n=S_\lambda(\tau)V_\lambda^{n-1}
	+\tau S_\lambda(\tau)e^{-\lambda t_{n-1}}B(U^{n-1})
	+S_\lambda(\tau)e^{-\lambda t_{n-1}}G(U^{n-1})\delta W_n
\end{equation*}
for any $n=1,2,\ldots,N$. Note that $V_\lambda^n$ is the exponential Euler approximation of $v_\lambda(t_n)$ (see \eqref{eq:vlambda}). By iteration, we have 
\begin{equation*}%
	V_\lambda^n=S_\lambda(t_n)V_\lambda^{0}+\tau \sum_{j=0}^{n-1}S_\lambda(t_n-t_j)e^{-\lambda t_{j}}
	B(U^j)
	+ \sum_{j=0}^{n-1}S_\lambda(t_n-t_j)e^{-\lambda t_{j}}
	G(U^j)\delta W_{j+1}.
\end{equation*}
Hence, taking into account \eqref{eq:vlambda}, for
$n=1,\ldots,N$ we obtain
\begin{equation}\label{eq:v-vn}
v_\lambda(t_n)-V_\lambda^n
=\mathcal D_1+\mathcal D_2+\mathcal D_3
+\mathcal T_1+\mathcal T_2+\mathcal T_3,
\end{equation}
where deterministic error terms $\{\mathcal D_i\}_{i=1}^3$ and the stochastic error terms $\{\mathcal T_i\}_{i=1}^3$ are
\begin{align*}
\mathcal D_1
&:=
\sum_{j=0}^{n-1}
\int_{t_j}^{t_{j+1}}
\bigl(S_\lambda(t_n-s)-S_\lambda(t_n-t_j)\bigr)
e^{-\lambda s}B(u(s))\,\ud s,\\
\mathcal D_2
&:=
\sum_{j=0}^{n-1}
\int_{t_j}^{t_{j+1}}
S_\lambda(t_n-t_j)
\bigl(
e^{-\lambda s}B(u(s))
-e^{-\lambda t_j}B(u(t_j))
\bigr)\,\ud s,\\
\mathcal D_3
&:=
\sum_{j=0}^{n-1}
\int_{t_j}^{t_{j+1}}
S_\lambda(t_n-t_j)e^{-\lambda t_j}
\bigl(B(u(t_j))-B(U^j)\bigr)\,\ud s,\\
\mathcal T_1
&:=
\sum_{j=0}^{n-1}
\int_{t_j}^{t_{j+1}}
\bigl(S_\lambda(t_n-s)-S_\lambda(t_n-t_j)\bigr)
e^{-\lambda s}G(u(s))\,\ud W(s),\\
\mathcal T_2
&:=
\sum_{j=0}^{n-1}
\int_{t_j}^{t_{j+1}}
S_\lambda(t_n-t_j)
\bigl(
e^{-\lambda s}G(u(s))
-e^{-\lambda t_j}G(u(t_j))
\bigr)\,\ud W(s),\\
\mathcal T_3
&:=
\sum_{j=0}^{n-1}
\int_{t_j}^{t_{j+1}}
S_\lambda(t_n-t_j)e^{-\lambda t_j}
\bigl(G(u(t_j))-G(U^j)\bigr)\,\ud W(s).
\end{align*}
 Let $p\in[2,\infty)$ and denote $\mathscr{L}_2^0:=\mathscr{L}_2(\mathcal{U}_0,L_{\beta\gamma}^2(\Gamma))$. 
 We first estimate the error terms $\mathcal T_1$ and $\mathcal{D}_1$ due to the semigroup freezing. By Burkholder inequality and Lemma \ref{lem:smo}, 
		\begin{align*}
		&\E\big[\|\mathcal{T}_1\|_{L_{\beta\gamma}^2(\Gamma)}^p\big]
		\le C\E\Bigg[\bigg(\sum_{j=0}^{n-1}\int_{t_j}^{t_{j+1}} \|(S_\lambda(t_n-s)- S_\lambda(t_n-t_j))e^{-\lambda s}
		G(u(s))\|_{\mathscr{L}_2^0}^2\ud s\bigg)^{\frac{p}{2}}\Bigg]\\
			&\le C\E\Bigg[\bigg(\sum_{j=0}^{n-1}\int_{t_j}^{t_{j+1}} \|(I- S_\lambda(s-t_j))S_\lambda(t_n-s)
		G(u(t_n))\|_{\mathscr{L}_2^0}^2\ud s\bigg)^{\frac{p}{2}}\Bigg]\\
		&\quad+ C\E\Bigg[\bigg(\sum_{j=0}^{n-1}\int_{t_j}^{t_{j+1}} \|(I- S_\lambda(s-t_j))S_\lambda(t_n-s)
	(	G(u(t_n))-G(u(s)))\|_{\mathscr{L}_2^0}^2\ud s\bigg)^{\frac{p}{2}}\Bigg]\\
	&=:\mathcal{J}_1+\mathcal{J}_2.
		\end{align*}
		Invoking \eqref{eq:smo2}, \eqref{eq:smo1} (if $\varrho\in(0,\frac{1}{2})$), \eqref{eq:smooth} (if $\varrho=\frac{1}{2}$), and \eqref{eq:L2}, one has 
		\begin{align*}
		\mathcal{J}_1&\le C\E\Bigg[\bigg(\sum_{j=0}^{n-1}\int_{t_j}^{t_{j+1}} (s-t_j)^{2\varrho}\|{\mathcal{A}_\lambda^{\varrho}}S_\lambda(t_n-s)
		G(u(t_n))\|_{\mathscr{L}_2^0}^2\ud s\bigg)^{\frac{p}{2}}\Bigg]\\
		&\le C\tau^{p\varrho}\E\Bigg[\bigg(\int_{0}^{t_{n}} \|{\mathcal{A}_\lambda^{\varrho}}S_\lambda(t_n-s)
		G(u(t_n))\|_{\mathscr{L}_2^0}^2\ud s\bigg)^{\frac{p}{2}}\Bigg]\\
		&\le C\tau^{p\varrho}\E\left[\|G(u(t_n))\|_{\mathscr{L}_2^0}^p\right]\le C\tau^{p\varrho},
		\end{align*}
For completeness, the last step follows, when $0<\varrho<1/2$, from
		\[
		 \int_0^{t_n}\|\mathcal A_\lambda^\varrho S_\lambda(r)Q\|_{\mathscr L_2^0}^2\,\ud r
		 \le C\int_0^{t_n}r^{-2\varrho}\,\ud r\,\|Q\|_{\mathscr L_2^0}^2
		 \le C(T,\varrho)\|Q\|_{\mathscr L_2^0}^2,
		\]
		whereas for $\varrho=1/2$ it follows by applying \eqref{eq:smooth} to each column of $Q$ and summing.
		Here we have also used the linear growth of $G$ in \eqref{eq:Lin}. Moreover, according to Lemma \ref{lem:smo}, the Lipschitz continuity of $G$ in \eqref{eq:Lip}, the Minkowski inequality, and \eqref{eq:unified-Hol},
		\begin{align*}
			\mathcal{J}_2&\le C\E\Bigg[\bigg(\sum_{j=0}^{n-1}\int_{t_j}^{t_{j+1}} (s-t_j)^{2\varrho}(t_n-s)^{-2\varrho}\|	G(u(t_n))-G(u(s))\|_{\mathscr{L}_2^0}^2\ud s\bigg)^{\frac{p}{2}}\Bigg]\\
			&\le C\bigg(\sum_{j=0}^{n-1}\int_{t_j}^{t_{j+1}} (s-t_j)^{2\varrho}(t_n-s)^{-2\varrho}s^{-2\varrho_1}(t_n-s)^{2\varrho}\ud s\bigg)^{\frac{p}{2}}\le C_{\varrho}\tau^{p\varrho},
		\end{align*}
		where the last step used 
 $\varrho_1\in[0,\frac{1}{2})$. 
		Gathering the estimates of $\mathcal{J}_1$ and $\mathcal{J}_2$, we arrive at
			$\E[\|\mathcal{T}_1\|_{L_{\beta\gamma}^2(\Gamma)}^p]\le C\tau^{p\varrho}$.
		For the deterministic semigroup-freezing term, the same factorization and the linear growth of $B$ give the following estimate
		\begin{align*}
		 \|\mathcal D_1\|_{L^p(\Omega,L^2_{\beta\gamma}(\Gamma))}
		 &\le C\sum_{j=0}^{n-1}\int_{t_j}^{t_{j+1}}
		 (s-t_j)^\varrho(t_n-s)^{-\varrho}
		 \bigl(1+\|u(s)\|_{L^p(\Omega,L^2_{\beta\gamma}(\Gamma))}\bigr)\,\ud s\\
		 &\le C\tau^\varrho\int_0^{t_n}(t_n-s)^{-\varrho}\,\ud s
		 \le C_\varrho\tau^\varrho.
		\end{align*}

		Next, we estimate the error terms $\mathcal T_2$ and $\mathcal{D}_2$ due to the solution freezing.
		By the linear growth and Lipschitz continuity of $G$, we deduce that for any $s\in[t_j,t_{j+1}]$ and $j=0,1,\ldots,N-1$,
		\begin{equation*}
\|e^{-\lambda s}
G(u(s))-e^{-\lambda t_{j}}
G(u(t_j))\|_{\mathscr{L}_2^0}
\le C\tau(1+\|u(s)\|_{L_{\beta\gamma}^2(\Gamma)})+e^{-\lambda t_{j}}\|
u(s)-
u(t_j)\|_{L_{\beta\gamma}^2(\Gamma)}.
\end{equation*}
Then by \eqref{eq:L2} and \eqref{eq:unified-Hol}, we obtain that for any $s\in[t_j,t_{j+1}]$ and $j=1,\ldots,N-1$,
\begin{equation*}
	\|e^{-\lambda s}
	G(u(s))-e^{-\lambda t_{j}}
	G(u(t_j))\|_{L^{p}(\Omega,\mathscr{L}_2^0)}^2\le C\tau^2+C\tau^{2\varrho}t_j^{-2\varrho_1}\le C(1+t_j^{-2\varrho_1})\tau^{2\varrho}.
\end{equation*}
Hence, by the contractivity of $S_\lambda(\cdot)$, the Burkholder inequality, and the Minkowski inequality, we arrive at 
		\begin{align*}
			\E\big[\|\mathcal{T}_2\|_{L_{\beta\gamma}^2(\Gamma)}^p\big]&\le C\E\Bigg[\bigg(\sum_{j=0}^{n-1}\int_{t_j}^{t_{j+1}} \|e^{-\lambda s}
			G(u(s))-e^{-\lambda t_{j}}
			G(u(t_j))\|_{\mathscr{L}_2^0}^2\ud s\bigg)^{\frac{p}{2}}\Bigg]\\
			&\le C\bigg(\sum_{j=0}^{n-1}\int_{t_j}^{t_{j+1}} \|e^{-\lambda s}
			G(u(s))-e^{-\lambda t_{j}}
			G(u(t_j))\|_{L^{p}(\Omega,\mathscr{L}_2^0)}^2\ud s\bigg)^{\frac{p}{2}}\\
			&\le C\bigg(\tau +\tau^{2\varrho}\sum_{j=1}^{n-1}\int_{t_j}^{t_{j+1}} (1+t_j^{-2\varrho_1})\ud s\bigg)^{\frac{p}{2}}
			\le C_{\varrho}\tau^{p\varrho}.
\end{align*}
On the interval $[0,t_1]$, the moment bound \eqref{eq:L2} gives an $O(\tau)$ contribution inside the parentheses; since $0<\tau\leq1$ and $2\varrho\leq1$, this is $O(\tau^{2\varrho})$. For the remaining intervals, we used
$
\tau\sum_{j=1}^{N-1}t_j^{-2\varrho_1}\le C
$
for $2\varrho_1<1$.
For the deterministic term, the same decomposition, Minkowski's inequality, and the Lipschitz continuity of $B$ yield
\begin{align*}
 \|\mathcal D_2\|_{L^p(\Omega,L^2_{\beta\gamma}(\Gamma))}
 &\le C\tau
 +C\sum_{j=1}^{n-1}\int_{t_j}^{t_{j+1}}
 \bigl(\tau+t_j^{-\varrho_1}(s-t_j)^\varrho\bigr)\,\ud s\le C_\varrho\tau^\varrho.
\end{align*}

It remains to estimate the error terms $\mathcal T_3$ and $\mathcal D_3$.
Using the Burkholder inequality, the Minkowski inequality, the Lipschitz continuity of $G$, and the discrete H\"older inequality, we obtain
\begin{align*}
	\E\left[\|\mathcal{T}_3\|_{L_{\beta\gamma}^2(\Gamma)}^p\right]
	&\le C\E\Bigg[\bigg(\sum_{j=0}^{n-1}\int_{t_j}^{t_{j+1}} \|G(u(t_j))-G(U^j)\|_{\mathscr{L}_2^0}^2\ud s\bigg)^{\frac{p}{2}}\Bigg]\\
	&\le C\bigg(\sum_{j=0}^{n-1}\int_{t_j}^{t_{j+1}} \|G(u(t_j))-G(U^j)\|_{L^{p}(\Omega,\mathscr{L}_2^0)}^2\ud s\bigg)^{\frac{p}{2}}\\
	&\le C\sum_{j=0}^{n-1}\int_{t_j}^{t_{j+1}} \|u(t_j)-U^j\|_{L^{p}(\Omega,L_{\beta\gamma}^2(\Gamma))}^{p}\ud s.
\end{align*}
Similarly, by the Lipschitz continuity of $B$, we also have 
\begin{equation*}
	\E\left[\|\mathcal{D}_3\|_{L_{\beta\gamma}^2(\Gamma)}^p\right]
		\le C\sum_{j=0}^{n-1}\int_{t_j}^{t_{j+1}} \|u(t_j)-U^j\|_{L^{p}(\Omega,L_{\beta\gamma}^2(\Gamma))}^{p}\ud s.
\end{equation*}
Combining the estimates of $\mathcal{T}_i$ and $\mathcal D_i$ for $i=1,2,3$ with \eqref{eq:v-vn}, and since $u(t_n)-U^n=e^{\lambda t_n}(v_\lambda(t_n)-V_\lambda^n)$, we arrive at 
\begin{equation*}
\|u(t_n)-U^n\|_{L^{p}(\Omega,L_{\beta\gamma}^2(\Gamma))}^{p}\le C\tau^{p\varrho}+C\sum_{j=0}^{n-1}\int_{t_j}^{t_{j+1}} \|u(t_j)-U^j\|_{L^{p}(\Omega,L_{\beta\gamma}^2(\Gamma))}^{p}\ud s.
\end{equation*}
Finally, applying the discrete Gr\"onwall inequality, we complete the proof.
\end{proof}

Combining Theorem~\ref{thm:main2} with
Propositions~\ref{tho:faa} and~\ref{theo:MS-2D}
yields the AP property of the exponential Euler method \eqref{eq:EEMmult} in the
multiple-critical-point setting. This extends the AP result of  \cite{CS24}, where the analysis was restricted to the case of a Hamiltonian with a unique critical point.
\begin{theorem}\label{coro:AP}
Let $b$ and $g$ be continuously differentiable with bounded derivatives, and let Assumption \ref{Asp:gamma-S} and \eqref{eq:Tneq0} hold. Assume that
$\int_{\R^2}|\chi(x)|^2\sqrt{\vartheta(H(x))}\ud x<\infty$, $\nabla\chi\in \mathbb{H}_\gamma$, $\chi=(\chi^\wedge)^\vee$, and $\int_{\R^2}|\xi|^2\ud\mu<\infty$. Then the following limits commute:
 \begin{equation*}
\lim_{\epsilon\to 0}\lim_{N\to \infty}U_{\epsilon}^{N}=u(T)^\vee=\lim_{N\to \infty}\lim_{\epsilon\to 0}U_{\epsilon}^{N}\quad\text{in} ~L^2(\Omega,\mathbb{H}_\gamma),
\end{equation*}
where 
$U_{\epsilon}^{N}$ is the exponential Euler approximation of $u_\epsilon(T)$, given by \eqref{eq:EEMmult}.
\end{theorem}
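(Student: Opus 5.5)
The plan is to verify the two iterated limits separately and show that each equals $u(T)^\vee$ in $L^2(\Omega,\mathbb H_\gamma)$. Each one is assembled from results already stated: Propositions~\ref{tho:faa} and~\ref{theo:MS-2D}, and Theorem~\ref{thm:main2}. First I will check that the hypotheses of the statement imply those of each ingredient.

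By the remark after Assumption~\ref{asp:aT}, Assumption~\ref{Asp:gamma-S} implies Assumptions~\ref{Asp:gamma} and~\ref{asp:aT}. Since $b$ and $g$ are $C^1$ with bounded derivatives, they are globally Lipschitz. Since $\chi\in\mathbb H_\gamma$ (this follows from the standing assumption, or from the weighted integrability together with boundedness of $\vartheta$), the contraction property \eqref{eq:contraction} gives $\chi^\wedge\in L^2_{\beta\gamma}(\Gamma)$. Hence Theorem~\ref{thm:main2}(a) applies with, say, $\varrho=1/4$ and $p=2$.

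\emph{Inner limit in $N$.} Fix $\epsilon\in(0,1]$. Proposition~\ref{theo:MS-2D}(1), applied with $n=N$ (so $t_N=T$), gives $\E\|u_\epsilon(T)-U_\epsilon^N\|_{\mathbb H_\gamma}^2\le C\tau(1+\epsilon^{-2})\to0$ as $N\to\infty$. Thus $\lim_{N\to\infty}U_\epsilon^N=u_\epsilon(T)$. Proposition~\ref{tho:faa} with $p=2$ and any $\tau_0\in(0,T)$ then yields $\E\|u_\epsilon(T)-u(T)^\vee\|_{\mathbb H_\gamma}^2\to0$ as $\epsilon\to0$, using \eqref{eq:Tneq0}. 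This establishes the left equality.

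\emph{Inner limit in $\epsilon$.} Fix $N$. Proposition~\ref{theo:MS-2D}(2) with $p=2$ uses \eqref{eq:Tneq0}, $\chi\in\mathbb H_\gamma$ and $\chi=(\chi^\wedge)^\vee$. It gives $U_\epsilon^N\to(U^N)^\vee$ in $L^2(\Omega,\mathbb H_\gamma)$. The isometry in \eqref{eq:contraction} then gives
\begin{equation*}
\|(U^N)^\vee-u(T)^\vee\|_{L^2(\Omega,\mathbb H_\gamma)}=\|U^N-u(T)\|_{L^2(\Omega,L^2_{\beta\gamma}(\Gamma))}\le C\tau^{1/4},
\end{equation*}
where the inequality is Theorem~\ref{thm:main2}(a) at $n=N$. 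This tends to $0$ as $N\to\infty$, which gives the right equality.

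The only point needing care, and so the main obstacle, is the hypothesis bookkeeping. One must confirm that the $\gamma$ of Assumption~\ref{Asp:gamma-S} satisfies Assumption~\ref{asp:aT}, with some $\kappa$, so that Theorem~\ref{thm:main2} is available. This follows from Remark~\ref{rem:AT} and $|\vartheta'|\le C\vartheta$: near vertices $\alpha_k$ is bounded while $\beta_k$ is bounded below, and at infinity $\alpha_k\sim cz$ together with $|z\vartheta''|\le C\vartheta$ gives the needed control. One must also check that the $U^n$ in Proposition~\ref{theo:MS-2D}(2) is exactly the scheme \eqref{eq:Un-intro} analysed in Theorem~\ref{thm:main2}, which holds by construction. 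Neither the half-order fractional-domain condition nor part (b) is required.
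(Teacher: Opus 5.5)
Your proposal is correct and is essentially the paper's proof. It makes the same hypothesis check: $\chi\in\mathbb H_\gamma$ by boundedness of $\vartheta$, hence $\chi^\wedge\in L^2_{\beta\gamma}(\Gamma)$ by \eqref{eq:contraction}, and Assumption~\ref{Asp:gamma-S} implies Assumptions~\ref{Asp:gamma} and~\ref{asp:aT}. It then uses Proposition~\ref{theo:MS-2D}(1) followed by Proposition~\ref{tho:faa} for one order of limits, and Proposition~\ref{theo:MS-2D}(2), the isometry in \eqref{eq:contraction}, and Theorem~\ref{thm:main2}(a) for the other.

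The only loose spot is your aside on why Assumption~\ref{asp:aT} holds at infinity. There the needed bound $z|\vartheta'|^2\le C\vartheta^2$ does not follow immediately from $|z\vartheta''|\le C\vartheta$ but requires a short ODE comparison argument for $\vartheta'/\vartheta$; since the paper takes this implication as given, simply citing it is enough.
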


\begin{proof}
Recall that $\gamma(z,k)=\vartheta(z)$ and that $\gamma$ is bounded.
Hence, the assumption
$
\int_{\mathbb R^2}
|\chi(x)|^2\sqrt{\vartheta(H(x))}\,\ud x<\infty
$
implies that $\chi\in\mathbb H_\gamma$, because
$\vartheta\leq \|\vartheta\|_\infty^{1/2}\sqrt\vartheta$. Moreover, by the
projection estimate \eqref{eq:contraction}, $\chi^\wedge\in L^2_{\beta\gamma}(\Gamma)$.
Thus, the initial-value condition in Theorem \ref{thm:main2}(a) is
satisfied. Furthermore, Assumption \ref{Asp:gamma-S} implies both
Assumption \ref{Asp:gamma} and Assumption \ref{asp:aT}. Therefore,
Proposition \ref{theo:MS-2D}, Proposition \ref{tho:faa}, and Theorem
\ref{thm:main2}(a) are all applicable.
For every fixed $\epsilon>0$, Proposition \ref{theo:MS-2D}(1) gives
\begin{align*}
\limsup_{N\to \infty}
\mathbb E\big[
\|U_\epsilon^N-u(T)^\vee\|_{\mathbb H_\gamma}^2
\big]&\leq
2\limsup_{N\to \infty}
\mathbb E\big[
\|U_\epsilon^N\!-\!u_\epsilon(T)\|_{\mathbb H_\gamma}^2
\big]
\!+\!
2\mathbb E\big[
\|u_\epsilon(T)\!-\!u(T)^\vee\|_{\mathbb H_\gamma}^2
\big]\\
&\leq
2\mathbb E\big[
\|u_\epsilon(T)-u(T)^\vee\|_{\mathbb H_\gamma}^2
\big].
\end{align*}
Letting $\epsilon\to0$ and applying Proposition \ref{tho:faa}, we
obtain
$
\lim_{\epsilon\to0}\limsup_{N\to \infty}
\mathbb E[
\|U_\epsilon^N-u(T)^\vee\|_{\mathbb H_\gamma}^2
]
=0.
$
This proves the first sequential limit.

Conversely, for a fixed $\tau>0$, Proposition
\ref{theo:MS-2D}(2) and the isometry in \eqref{eq:contraction} yield
\begin{align*}
&\limsup_{\epsilon\to0}
\mathbb E\left[
\|U_\epsilon^N-u(T)^\vee\|_{\mathbb H_\gamma}^2
\right]\\
&\quad\leq
2\limsup_{\epsilon\to0}
\mathbb E\left[
\|U_\epsilon^N-(U^N)^\vee\|_{\mathbb H_\gamma}^2
\right]
+
2\mathbb E\left[
\|U^N-u(T)\|_{L^2_{\beta\gamma}(\Gamma)}^2
\right]\\
&\quad=
2\mathbb E\left[
\|U^N-u(T)\|_{L^2_{\beta\gamma}(\Gamma)}^2
\right].
\end{align*}
Furthermore, Theorem
\ref{thm:main2}(a) gives
$
\lim_{N\to \infty}\limsup_{\epsilon\to0}
\mathbb E[
\|U_\epsilon^N-u(T)^\vee\|_{\mathbb H_\gamma}^2
]
=0,
$
which proves the second sequential limit. The proof is complete.
\end{proof}
\section{Numerical experiments}\label{S:NE}

We consider the Hamiltonian
\begin{equation}\label{H}
 H(\mathtt{x},\mathtt{y})=2\mathtt{y}^2+V(\mathtt{x}),
 \qquad (\mathtt{x},\mathtt{y})\in\mathbb R^2,
\end{equation}
where $V(\mathtt{x})=V_1(\mathtt{x})-V_1(-2)$ and
\[
 V_1(\mathtt{x})
 =\frac25\mathtt{x}^2+\frac45\mathtt{x}
 -\frac32\log(1+\mathtt{x}^2)-\frac{11}{10}\arctan\mathtt{x}
 +\frac{3\mathtt{x}-9}{10(1+\mathtt{x}^2)}.
\]
Unless otherwise stated, the test data are
\begin{equation}\label{eq:fv-common-test-data}
 b\equiv0,~ g(u)=\cos u,~
 \chi(x)=e^{-H(x)},~
 \gamma(z,k)=e^{-\sqrt{z+1}},~
 \Lambda(x)=\pi^{-1}e^{-|x|^2}.
\end{equation}

Fig.~\ref{Hamilton_level} displays the Hamiltonian surface for \eqref{H},
its level sets, and the associated {\color{black}metric graph}.
The minima $(-2,0)$ and $(1,0)$ and the saddle $(0,0)$ correspond to
$O_1$, $O_2$, and $O_3$, respectively. The associated {\color{black}metric graph} has two
bounded edges $I_1,I_2$ joining the minima to $O_3$ and one unbounded
edge $I_3$ extending from $O_3$ toward $O_\infty$; see
Fig.~\ref{Hamilton_level}.
The projected initial datum $\chi^\wedge(z,k)=e^{-z}$ belongs to
$D(\mathcal L)=D(\mathcal A_\lambda)$: it has the required edgewise
regularity and integrability, is continuous at the vertices, and
satisfies the Kirchhoff condition because
$\alpha_3(H(O_3))=\alpha_1(H(O_3))+\alpha_2(H(O_3))$.

We first construct a finite-volume graph discretization in
subsection~\ref{S:full-discrete}, then examine the fast-advection asymptotic property in subsection~\ref{subsec:ap-comparison} and the temporal
convergence predicted by Theorem~\ref{thm:main2} in
subsection~\ref{subsec:graph-temporal-convergence}.
Our theoretical analysis addresses only the temporal discretization,
whereas the numerical implementation additionally involves graph
truncation, spatial discretization, covariance discretization, and the
numerical generation of Gaussian increments.
Accordingly, the experiments should be interpreted as providing
qualitative numerical evidence for the AP property rather than a
fully discrete AP error estimate.
We refer to \cite{CKS26} for related error estimates for graph truncation
and spatial discretization of \eqref{eq:intro}.

\subsection{A truncated finite-volume spatial discretization}
\label{S:full-discrete}

We truncate $I_3$ at $z_\infty:=H(O_3)+L_\Gamma$, where $L_\Gamma>0$,
and denote the artificial vertex by $O_{\rm out}:=(z_\infty,3)$.
The truncated {\color{black}metric graph} $\Gamma_{L_\Gamma}$ has edges
\[
 I_{1,L_\Gamma}=I_1,\qquad I_{2,L_\Gamma}=I_2,\qquad
 I_{3,L_\Gamma}
 =\{(z,3)\in I_3:H(O_3)<z<z_\infty\}.
\]
We impose a homogeneous Dirichlet condition at $O_{\rm out}$ and
natural zero-flux conditions at $O_1,O_2$, where
$\alpha_k(H(O_k))=0$, $k=1,2$.
On each edge, $\partial_t v=\mathcal Lv$ takes the conservative form
\begin{equation}\label{eq:fv-conservative-form}
 \beta_k(z)\partial_t v(t,z,k)+\partial_zJ_k(t,z)=0,
 \qquad J_k(t,z)=-\frac12\alpha_k(z)\partial_zv(t,z,k).
\end{equation}

Partition $I_{k,L_\Gamma}$ into $N_z^k$ uniform cells
$C_i^k=(z_{i-1/2}^k,z_{i+1/2}^k)$, with centers $z_i^k$ and
width $\Delta z^k$. Let $w_i^k(t)$ approximate $v(t,z_i^k,k)$ and
set $N_h:=\sum_{k=1}^3N_z^k$.
Integrating \eqref{eq:fv-conservative-form} over a cell gives
\begin{equation}\label{eq:Cik}
 \frac{\ud}{\ud t}\int_{C_i^k}\beta_k(z)v(t,z,k)\,\ud z
 =J_k(t,z_{i-1/2}^k)-J_k(t,z_{i+1/2}^k).
\end{equation}
We approximate the cell integral in \eqref{eq:Cik} by $m_i^kw_i^k(t)$, where
$m_i^k:=\beta_k(z_i^k)\Delta z^k$.

\emph{Interior fluxes.}
Arithmetic averaging of $\alpha_k$ and a centered difference for
$\partial_zv$ give
\[
 J_k(\cdot,z_{i+1/2}^k)
 \approx c_{i+1/2}^k(w_i^k-w_{i+1}^k),
 \qquad
 c_{i+1/2}^k
 :=\frac{\alpha_k(z_i^k)+\alpha_k(z_{i+1}^k)}
          {4(z_{i+1}^k-z_i^k)},
\]
where the time arguments $t$ of the cell values $w^k_i$ and $w^k_{i+1}$ are suppressed.
The flux enters the two adjacent balances with opposite signs.
Thus, for $2\leq i\leq N_z^k-1$,
\begin{equation*}%
 m_i^k\frac{\ud w_i^k}{\ud t}
 =c_{i-1/2}^k(w_{i-1}^k-w_i^k)
  +c_{i+1/2}^k(w_{i+1}^k-w_i^k).
\end{equation*}

\emph{Vertex coupling and boundary fluxes.}
At $O_1,O_2$, the boundary fluxes vanish. At $O_3$, write
\[
 \begin{aligned}
 (z_{O,1},z_{O,2},z_{O,3})
   &:=(z_{N_z^1}^1,z_{N_z^2}^2,z_1^3),\qquad
 (w_{O,1},w_{O,2},w_{O,3})
   &:=(w_{N_z^1}^1,w_{N_z^2}^2,w_1^3).
 \end{aligned}
\]
The flux from the $k$th adjacent cell toward the common vertex value
$\widehat w_O$ is $d_k(w_{O,k}-\widehat w_O)$, with
$d_k:=\alpha_k(z_{O,k})/\Delta z^k$; the denominator accounts for
the half-cell distance to the vertex and the factor $1/2$ in the
flux. The discrete Kirchhoff condition and its solution are given by
\begin{equation}\label{eq:fv-discrete-kirchhoff}
 \sum_{k=1}^3d_k(w_{O,k}-\widehat w_O)=0,
 \qquad
 \widehat w_O=\frac{\sum_{k=1}^3d_kw_{O,k}}{\sum_{k=1}^3d_k}.
\end{equation}
Thus $\widehat w_O$ is determined by the cell values, with no
additional vertex degree of freedom. The adjacent cell equations are
\begin{equation*}%
 \begin{aligned}
 m_{N_z^k}^k\frac{\ud w_{N_z^k}^k}{\ud t}
 &=c_{N_z^k-1/2}^k(w_{N_z^k-1}^k-w_{N_z^k}^k)
   +d_k(\widehat w_O-w_{N_z^k}^k),
 &&k=1,2,\\
 m_1^3\frac{\ud w_1^3}{\ud t}
 &=d_3(\widehat w_O-w_1^3)+c_{3/2}^3(w_2^3-w_1^3).
 \end{aligned}
\end{equation*}
At $O_{\rm out}$, the outward flux is $d_\infty w_{N_z^3}^3$,
where $d_\infty:=\alpha_3(z_{N_z^3}^3)/\Delta z^3$, so
\begin{equation*}%
 m_{N_z^3}^3\frac{\ud w_{N_z^3}^3}{\ud t}
 =c_{N_z^3-1/2}^3(w_{N_z^3-1}^3-w_{N_z^3}^3)
  -d_\infty w_{N_z^3}^3.
\end{equation*}
These cell balances define the finite-volume matrix $L_h$ through
\[
 \frac{\ud V_h}{\ud t}=L_hV_h,
 \qquad
 V_h:=(w_i^k)_{k=1,2,3;\,i=1,\ldots,N_z^k}.
\]

\emph{Evaluation of the coefficients.}
For $(z,k)$ in the interior of $I_k$, let
$\mathtt{x}_-^k(z)<\mathtt{x}_+^k(z)$ be the roots of
$V(\mathtt{x})=z$ bounding the $\mathtt{x}$-projection of $C_k(z)$,
and set
\[
 Y_z(\mathtt{x}):=\sqrt{0.5(z-V(\mathtt{x}))},
 \qquad
 \mathtt{x}\in[\mathtt{x}_-^k(z),\mathtt{x}_+^k(z)].
\]
The orbit consists of $\mathtt{y}=\pm Y_z(\mathtt{x})$.
Using symmetry and the arc-length identity
$\ud l_{z,k}=|\nabla H(\mathtt{x},Y_z(\mathtt{x}))|
\ud\mathtt{x}/(4Y_z(\mathtt{x}))$, we obtain from \eqref{eq:AT} that
\[
 \begin{aligned}
 \beta_k(z)
 =2\int_{\mathtt{x}_-^k(z)}^{\mathtt{x}_+^k(z)}
       \frac{\ud\mathtt{x}}{4Y_z(\mathtt{x})},\qquad
 \alpha_k(z)
 =2\int_{\mathtt{x}_-^k(z)}^{\mathtt{x}_+^k(z)}
       \frac{|V'(\mathtt{x})|^2+16|Y_z(\mathtt{x})|^2}
            {4Y_z(\mathtt{x})}\,\ud\mathtt{x}.
 \end{aligned}
\]
To regularize the endpoint singularities, we use the change of variables
\[
 \mathtt{x}
 =\frac{\mathtt{x}_-^k(z)+\mathtt{x}_+^k(z)}2
 +\frac{\mathtt{x}_+^k(z)-\mathtt{x}_-^k(z)}2\cos q,
 \qquad q\in[0,\pi].
\]
The transformed integrals are evaluated at the energy-cell centers
using the midpoint rule with $M_q=512$ points.
Figure~\ref{fig:fv-alpha-beta} shows the resulting coefficients.
The sampled values of $\beta_k$ are strictly monotone on each edge,
in qualitative agreement with \eqref{eq:Tneq0}, although these
finite samples do not establish $\beta_k'(z)\neq0$ throughout an edge.

\begin{figure}[htbp]
  \centering
  \begin{minipage}[t]{0.48\textwidth}
    \centering
    \includegraphics[width=0.94\linewidth,height=4.8cm]
    {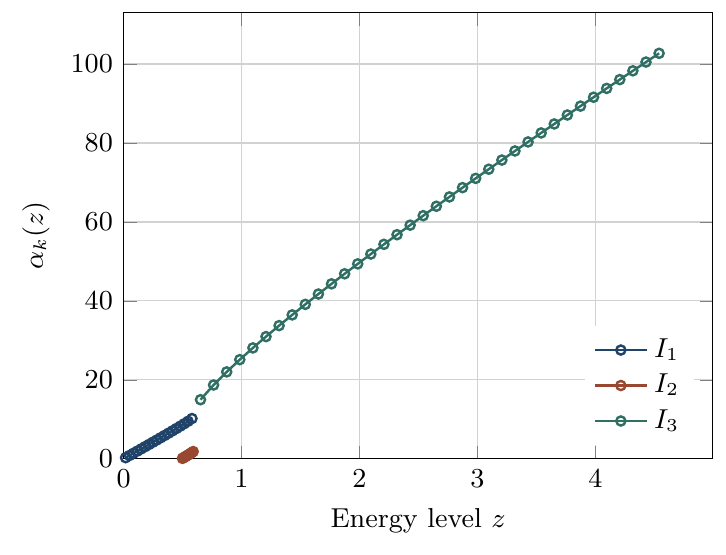}
  \end{minipage}
  \hfill
  \begin{minipage}[t]{0.48\textwidth}
    \centering
    \includegraphics[width=0.94\linewidth,height=4.8cm]
    {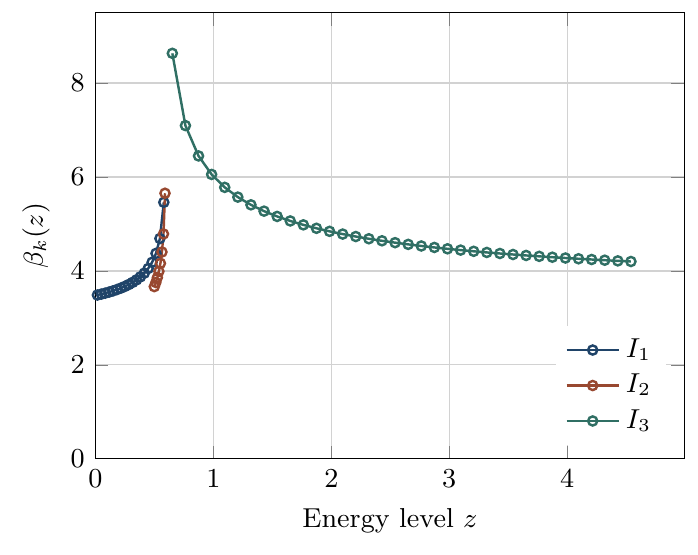}
  \end{minipage}
  \caption{Level-set coefficients at the energy-cell centers for
  $(N_z^1,N_z^2,N_z^3)=(18,8,36)$ and $L_\Gamma=4$, computed by
  midpoint quadrature with $M_q=512$ points.
  Left: $\alpha_k(z_i^k)$. Right: $\beta_k(z_i^k)$.}
  \label{fig:fv-alpha-beta}
\end{figure}

\subsection{Asymptotic property}
\label{subsec:ap-comparison}

We compare the exponential Euler approximation of
\eqref{eq:SRDA-intro} with the lifted approximation of the graph
equation \eqref{eq:intro}, using coupled noise increments.
For the physical equation \eqref{eq:SRDA-intro}, we use the centered Cartesian
finite-difference discretization of \cite[Appendix~B.1]{CS24} on
\[
 \Omega_{L_\Gamma}
 :=\{(\mathtt{x},\mathtt{y})\in\mathbb R^2:
       H(\mathtt{x},\mathtt{y})<H(O_3)+L_\Gamma\}.
\]
Let $\mathcal X_h=\{x_i\}_{i=1}^{N_x}$ be the Cartesian grid points
of spacing $h$ in $\Omega_{L_\Gamma}$, with grid functions extended
by zero outside $\mathcal X_h$. For
$p=(\mathtt{x}_p,\mathtt{y}_p)\in\mathcal X_h$, set
\[
 \begin{aligned}
 (D_hU)_p
 &:=\frac12\sum_{\ell=1}^2
       \frac{U_{p+he_\ell}-2U_p+U_{p-he_\ell}}{h^2},\\
 (B_hU)_p
 &:=-4\mathtt{y}_p\frac{U_{p+he_1}-U_{p-he_1}}{2h}
    +V'(\mathtt{x}_p)\frac{U_{p+he_2}-U_{p-he_2}}{2h},
 \end{aligned}
\]
where $e_1,e_2$ are the standard coordinate vectors.
The discrete generator is $A_{\epsilon,h}:=D_h+\epsilon^{-1}B_h$.
Using the data in \eqref{eq:fv-common-test-data}, the two schemes,
with $\tau=T/N$, are given by 
\begin{align}
 U_{\epsilon,h}^{n+1}
 &=\exp(\tau A_{\epsilon,h})
   \left[U_{\epsilon,h}^{n}
   +g(U_{\epsilon,h}^{n})\odot\Delta W_{h,n}^{x}\right],
 &U_{\epsilon,h}^{0}&=u_{0,h},
 \label{eq:sec53-physical-EE}\\
 V_h^{n+1}
 &=\exp(\tau L_h)
   \left[V_h^{n}+g(V_h^{n})\odot\Delta W_{h,n}^{\Gamma}\right],
 &V_h^{0}&=v_{0,h},
 \label{eq:sec53-graph-EE}
\end{align}
for $n=0,\ldots,N-1$, where $g$ and $\odot$ act componentwise,
$u_{0,h}(x_i)=e^{-H(x_i)}$, and $v_{0,h,i}^k=e^{-z_i^k}$.
The physical and graph noise increments in \eqref{eq:sec53-physical-EE} and \eqref{eq:sec53-graph-EE} must represent the same Gaussian
field; independent samples would introduce a discrepancy unrelated
to the fast-advection limit.
Enumerate the graph cells by $C_a$, $a=1,\ldots,N_h$, with centers
$(z_a,k_a)$. Let $\xi_a$ solve
$\dot\xi_a=\nabla^\perp H(\xi_a)$ on $C_{k_a}(z_a)$, with period
$\beta_{k_a}(z_a)$. Choose the orbit points and weights as
\[
 s_{a,m}=\frac{(m-\tfrac12)\beta_{k_a}(z_a)}{N_{\rm orb}},
 \qquad
 x_{a,m}=\xi_a(s_{a,m}),
 \qquad
 \omega_{a,m}=\frac1{N_{\rm orb}},
\]
for $m=1,\ldots,N_{\rm orb}$. These are midpoint quadrature points
in Hamiltonian time, computed by inverting the cumulative travel-time
map. The corresponding covariance blocks are
 \begin{gather*}
 (Q_h^{xx})_{ij}
 :=\Lambda(x_i-x_j),\qquad
 (Q_h^{\Gamma\Gamma})_{ab}
 :=\sum_{m,\ell=1}^{N_{\rm orb}}
       \omega_{a,m}\omega_{b,\ell}\Lambda(x_{a,m}-x_{b,\ell}),\\
 (Q_h^{x\Gamma})_{ia}
 :=\sum_{m=1}^{N_{\rm orb}}
       \omega_{a,m}\Lambda(x_i-x_{a,m}),
 \qquad Q_h^{\Gamma x}:=(Q_h^{x\Gamma})^\top.
 \end{gather*}
We factor their joint covariance by pivoted Cholesky:
\[
 Q_h^{\rm joint}
 :=\begin{pmatrix}
 Q_h^{xx}&Q_h^{x\Gamma}\\
 Q_h^{\Gamma x}&Q_h^{\Gamma\Gamma}
 \end{pmatrix}
 \approx FF^\top,
 \qquad F=\begin{pmatrix}F_x\\F_\Gamma\end{pmatrix},
\]
where $F_x\in\mathbb R^{N_x\times r}$ and
$F_\Gamma\in\mathbb R^{N_h\times r}$, with stopping criterion
$\max_i(Q_h^{\rm joint}-FF^\top)_{ii}\leq10^{-7}$.
At each time step, a common vector
$\Delta\beta_n\sim\mathcal N(0,\tau I_r)$ generates
\[
 \Delta W_{h,n}^{x}=F_x\Delta\beta_n,
 \qquad
 \Delta W_{h,n}^{\Gamma}=F_\Gamma\Delta\beta_n.
\]
Thus both schemes \eqref{eq:sec53-physical-EE} and \eqref{eq:sec53-graph-EE} are driven by coupled realizations of the same
approximate Gaussian field.

We next define a linear lifting operator $J_h$ from graph vectors to
Cartesian grid vectors. For a graph vector $v_h$ and each
$p\in\mathcal X_h$, $(J_hv_h)_p$ is obtained by evaluating the
piecewise linear interpolant of $v_h$ along the corresponding graph
edge at the energy coordinate $H(p)$.
At $O_3$, the interpolant uses the common vertex value from
\eqref{eq:fv-discrete-kirchhoff}; at $O_{\rm out}$, it uses zero.
With the weighted norm
$
 \|w\|_{\mathcal X_h}^2
 :=h^2\sum_{x_i\in\mathcal X_h}
       \gamma(H(x_i))|w(x_i)|^2,
$
we estimate the root-mean-square comparison error using $P$ Monte Carlo samples:
\[
\mathcal E_{h,L_\Gamma}^{(P)}(\epsilon)
:=
\left(
\frac{1}{P}\sum_{p=1}^{P}
\left|
U_{\epsilon,h}^{N}(\omega_p)
-J_hV_h^{N}(\omega_p)
\right|_{\mathcal X_h}^{2}
\right)^{1/2}.
\]

Figure~\ref{fig:ap-two-mesh} compares $h=0.1$ and $h=0.05$ with
$\tau=2^{-10}$. The graph resolution is held fixed, and the remaining
parameters are
\[
 L_\Gamma=4,\qquad T=0.25,\qquad
 (N_z^1,N_z^2,N_z^3,N_{\rm orb})=(72,32,144,16),\qquad P=100.
\]

\begin{figure}[htbp]
  \centering
  \includegraphics[width=0.6\textwidth]
  {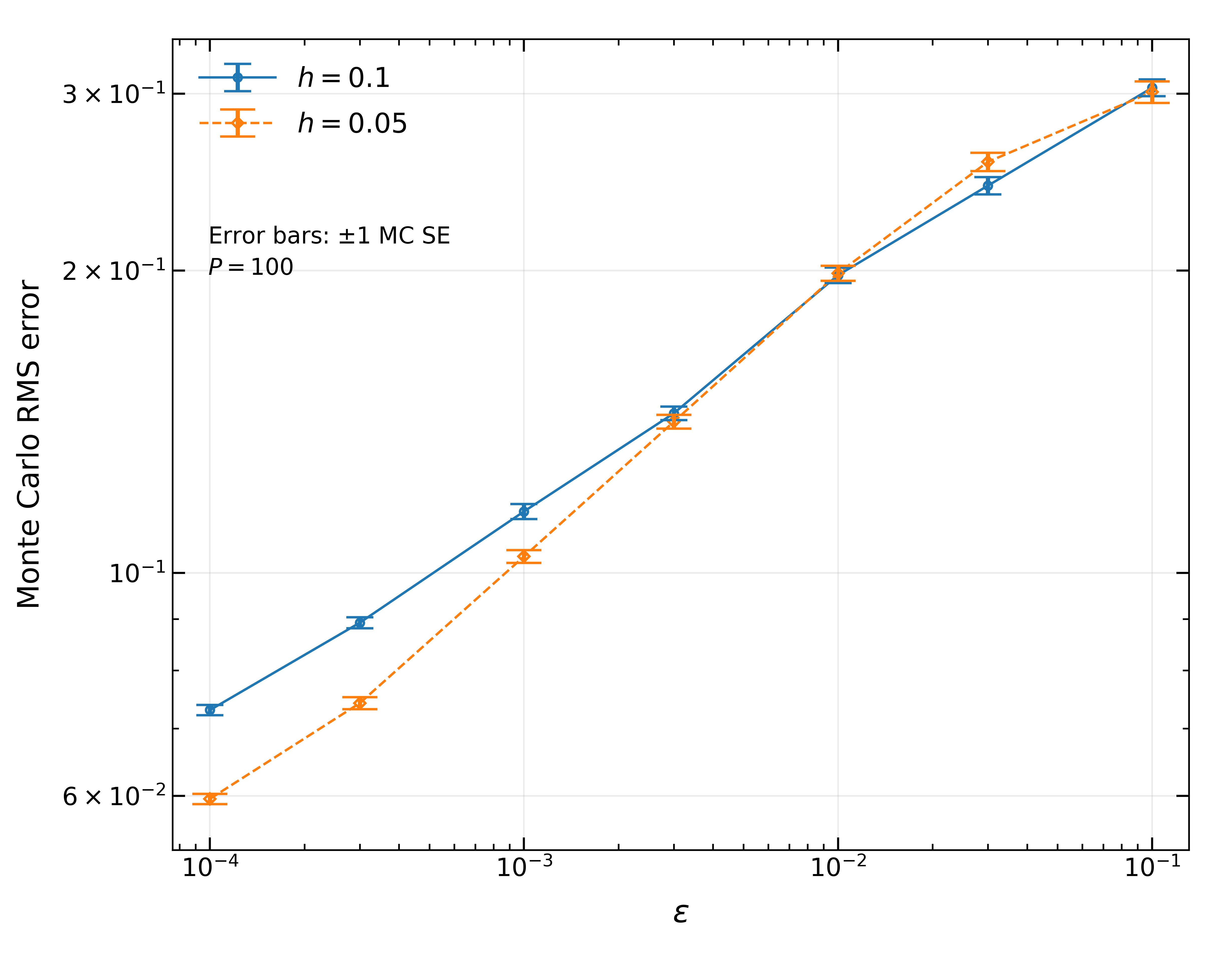}
  \caption{Monte Carlo comparison error
  $\mathcal E_{h,L_\Gamma}^{(P)}(\epsilon)$ for
  $h=0.1$ and $h=0.05$.
  Error bars indicate one Monte Carlo standard error on either
  side of each RMS estimate.}
  \label{fig:ap-two-mesh}
\end{figure}

On both Cartesian meshes, the comparison error  $\mathcal E_{h,L_\Gamma}^{(P)}(\epsilon)$ decreases as
$\epsilon$ decreases over the tested range,
in qualitative agreement with the time-discrete fast-advection limit.
At $\epsilon=10^{-4}$, refining $h$ from $0.1$ to $0.05$ reduces
the estimated error from $7.304\times10^{-2}$ to
$5.959\times10^{-2}$. The comparison also involves graph truncation,
spatial and covariance discretization, and the lifting operator $J_h$.
At fixed spatial resolution, these additional approximation errors
may dominate as $\epsilon$ decreases. The construction and analysis
of spatial discretizations that preserve the AP property are left
for future work.

\subsection{Temporal convergence for the limiting graph equation}
\label{subsec:graph-temporal-convergence}
We examine temporal convergence using the graph scheme
\eqref{eq:sec53-graph-EE} with the data in
\eqref{eq:fv-common-test-data}.
In the implementation, the covariance of the projected Wiener
process is approximated by orbit quadrature followed by spectral
truncation. Specifically, let $Q_h^{\Gamma\Gamma}$ be the discrete
orbit-averaged covariance matrix constructed in
Subsection~\ref{subsec:ap-comparison}, and let
$(\lambda_{\Gamma,j},q_{\Gamma,j})_{j=1}^{N_h}$ be its eigenpairs,
with orthonormal eigenvectors and eigenvalues ordered
nonincreasingly. We define
\[
 B_{\Gamma,h}
 :=\bigl(
 \sqrt{\lambda_{\Gamma,1}}q_{\Gamma,1},\ldots,
 \sqrt{\lambda_{\Gamma,r}}q_{\Gamma,r}
 \bigr),
 \qquad
 \Delta W_{h,n}^{\Gamma}=B_{\Gamma,h}\Delta B_n,
\]
where $\Delta B_n\sim\mathcal N(0,\tau I_r)$ are independent over
time steps and $r$ is the smallest rank satisfying
$
 \sum_{j>r}\lambda_{\Gamma,j}
 \leq10^{-12}\sum_{j=1}^{N_h}\lambda_{\Gamma,j}.
$
We use $T=1$, $L_\Gamma=8$, and $P=600$ sample paths, with two
spatial and orbit-quadrature resolutions:
\[
 (N_z^1,N_z^2,N_z^3,N_{\rm orb})=(12,5,48,8),
 \qquad
 (N_z^1,N_z^2,N_z^3,N_{\rm orb})=(24,10,96,16).
\]
The corresponding widths on the truncated third edge are
$\Delta z^3=1/6$ and $1/12$, the covariance ranks are $29$ and $31$,
and the relative Frobenius residuals are $5.57\times10^{-13}$ and
$3.05\times10^{-13}$, respectively.
Within each resolution, the spatial mesh and covariance factor
are held fixed while the time step is refined:
\[
 \tau_\ell=2^{-\ell},
 \qquad \ell=10,\ldots,15,
 \qquad N_\ell=T/\tau_\ell.
\]
For each sample path, increments are generated on the finest grid
$\tau_{15}$ and summed to obtain the coarser-grid increments, so
all temporal resolutions share the same Wiener realization.
Let $V_{h,\ell}^n$ denote the approximation with step $\tau_\ell$.
Using the weighted graph norm
\(
 \|v_h\|_{\mathrm{time},\Gamma,h}^{2}
 :=\sum_{k=1}^{3}\sum_{i=1}^{N_z^k}
 \beta_k(z_i^k)\Delta z^k\gamma(z_i^k,k)|v_i^k|^2,
\)
we measure the successive-level Cauchy errors by
\[
 \mathcal E_{\mathrm{time}}(\tau_\ell)
 :=\left(
 \frac1P\sum_{p=1}^{P}
 \left\|
 V_{h,\ell}^{N_\ell}(\omega_p)
 -V_{h,\ell+1}^{N_{\ell+1}}(\omega_p)
 \right\|_{\mathrm{time},\Gamma,h}^{2}
 \right)^{1/2},
 \qquad \ell=10,\ldots,14.
\]
The local empirical orders are
\(
 \operatorname{ord}_\ell
 :=\log_2(\mathcal E_{\mathrm{time}}(\tau_{\ell-1})/
      \mathcal E_{\mathrm{time}}(\tau_\ell))\) for $\ell=11,\ldots,14.$
Table~\ref{tab:fv-time-L8} shows decreasing Cauchy errors for both
resolutions. The fitted log--log slopes over $\ell=12,13,14$ are
$0.576$ and $0.587$, respectively, consistent with the strong
order-$1/2$ bound in Theorem~\ref{thm:main2}.

\begin{table}[htbp]
 \centering
 \small
 \renewcommand{\arraystretch}{1.08}
 \caption{Temporal Cauchy errors (Monte Carlo standard errors in
 parentheses) and local orders. Column groups specify
 $(N_z^1,N_z^2,N_z^3;N_{\rm orb})$.}
 \label{tab:fv-time-L8}
 \begin{tabular}{c|cc|cc}
 \hline
 & \multicolumn{2}{c|}{$(12,5,48;8)$}
 & \multicolumn{2}{c}{$(24,10,96;16)$}\\
 $\tau_\ell$
 & $\mathcal E_{\mathrm{time}}(\tau_\ell)$
 & $\operatorname{ord}_\ell$
 & $\mathcal E_{\mathrm{time}}(\tau_\ell)$
 & $\operatorname{ord}_\ell$\\
 \hline
 $2^{-10}$
 & $7.270\times10^{-4}\ (1.199\times10^{-5})$ & --
 & $6.962\times10^{-4}\ (1.110\times10^{-5})$ & --\\
 $2^{-11}$
 & $4.483\times10^{-4}\ (7.380\times10^{-6})$ & $0.697$
 & $4.504\times10^{-4}\ (7.980\times10^{-6})$ & $0.628$\\
 $2^{-12}$
 & $2.908\times10^{-4}\ (5.827\times10^{-6})$ & $0.625$
 & $3.019\times10^{-4}\ (6.439\times10^{-6})$ & $0.577$\\
 $2^{-13}$
 & $1.927\times10^{-4}\ (4.413\times10^{-6})$ & $0.593$
 & $2.025\times10^{-4}\ (4.140\times10^{-6})$ & $0.576$\\
 $2^{-14}$
 & $1.308\times10^{-4}\ (3.313\times10^{-6})$ & $0.559$
 & $1.339\times10^{-4}\ (3.597\times10^{-6})$ & $0.597$\\
 \hline
 \end{tabular}
\end{table}

\section{Conclusion}\label{S:conclusion}
In this work, we established an asymptotic-preserving result for the exponential
Euler method applied to stochastic reaction--diffusion--advection equations
with fast Hamiltonian transport and multiple critical points. A key
ingredient is the strong temporal convergence analysis of the limiting
scheme on the associated noncompact {\color{black}multi-edge metric graph}. Numerical experiments further illustrate the
fast-advection limit and the predicted temporal convergence behavior.
A natural direction for future work is the construction and rigorous
analysis of fully discrete AP schemes. In particular, it would be of
interest to quantify the effects of graph truncation, spatial
discretization in both the physical and graph spaces, and covariance
approximation uniformly with respect to the fast-advection parameter.

 \section*{Acknowledgments}

This work is supported by MOST National Key R\&D Program No. 2024YFA1015900, the Hong Kong Research Grants Council GRF grants 15301025 and 15302823, NSFC/RGC Joint Research Scheme N$\_$PolyU5141/24, NSFC grant 12522119, NSFC grant 12301526, internal funds (P0041274, P0045336) from the Hong Kong Polytechnic University, and the
CAS AMSS-PolyU Joint Laboratory of Applied Mathematics. We would like to thank Prof. Mihály Kovács (Chalmers University of Technology) for his valuable suggestions and helpful discussions, and Prof. Sandra Cerrai (University of Maryland) for her insightful lectures.

\bibliographystyle{siamplain}
\begingroup
\let\RevisionOriginalBibitem\bibitem
\renewcommand{\bibitem}[1]{\color{black}\RevisionOriginalBibitem{#1}%
  \def\RevisionCurrentKey{#1}\def\RevisionNewKey{V10}%
  \ifx\RevisionCurrentKey\RevisionNewKey\color{reviewred}\fi}
\bibliography{references}
\endgroup
\end{document}